\newtheorem{thm}{Theorem}[section]
\newtheorem{prop}[thm]{Proposition}
\newtheorem{lem}[thm]{Lemma}
\newtheorem{defn}[thm]{Definition}
\newtheorem{remark}[thm]{Remark}
\newtheorem{example}[thm]{Example}
\newtheorem{assumption}{Assumption}
\makeatletter \@addtoreset{equation}{section} \makeatother
\renewcommand{\P}{\mathbb{P}}
\newcommand{\E}{\mathbb{E}}
\newcommand{\R}{\mathbb{R}}
\newcommand{\N}{\mathbb{N}}
\newcommand{\e}{\mathrm{e}}
\renewcommand{\d}{\mathrm{d}}
\newcommand{\m}{\mathfrak{m} }
\newcommand{\s}{\mathfrak{s}}
\newcommand{\1}{{\bf 1}}
\newcommand{\eps}{{\varepsilon}}
\newcommand{\wh}{\widehat}
\renewcommand{\wh}{\widehat}
\title{\large\bf The Littlewood-Paley-Stein inequality for 
Dirichlet space tamed by 
distributional curvature lower bounds}
\author{
Syota Esaki\footnote{Syota Esaki ({\sf sesaki@fukuoka-u.ac.jp}) 
Department of Applied Mathematics, Fukuoka University,
Fukuoka 814-0180, Japan. 
Supported in part by JSPS Grant-in-Aid for Scientific Research (C) (No. 23K03158) and 
fund (No.~215001) from the Central Research Institute of Fukuoka University. \newline 
Current address: Mathematical Sciences Program, Department of Science and Technology, 
Faculty of Science and Technology, 
Oita University, 700 Dannoharu, Oita City, Oita Pref., 870-1192, JAPAN 
({\sf sesaki@oita-u.ac.jp}).
}\ \ \ \
Zi Jian Xu\footnote{Zi Jian Xu ({\sf a535218668@yahoo.co.jp}) 
Department of Applied Mathematics, Fukuoka University,
Fukuoka 814-0180, Japan. 
}
\ \ and\ \
Kazuhiro Kuwae\footnote{\Letter\, Kazuhiro Kuwae ({\sf kuwae@fukuoka-u.ac.jp})
Department of Applied Mathematics, Fukuoka University,
Fukuoka 814-0180, Japan. 
Supported in part by JSPS Grant-in-Aid for Scientific Research (S) (No. 22H04942) and fund (No.~215001) from the Central Research Institute of Fukuoka University.}
}
\date{}
\begin{document}
\maketitle
\begin{abstract}
The notion of tamed Dirichlet space by distributional lower Ricci curvature bounds was proposed by Erbar-Rigoni-Sturm-Tamanini~\cite{ERST} as the Dirichlet space having a weak form of Bakry-\'Emery curvature lower bounds in distribution sense. 
In this framework,    
we establish the Littlewood-Paley-Stein inequality for $L^p$-functions and $L^p$-boundedness of $q$-Riesz transforms with $q>1$, which partially 
generalizes the result by Kawabi-Miyokawa~\cite{KawabiMiyokawa}. 
\end{abstract}

{\it Keywords}:  Strongly local Dirichlet space, tamed Dirichlet space, Bakry-\'Emery condition, smooth measures of (extended) 
Kato class, smooth measures of Dynkin class, 
Littlewood-Paley-Stein inequality, Riesz transform, Wiener space, path space with Gibbs measure,   RCD-space, 
Riemannian manifold with boundary, configuration space,  
{\it Mathematics Subject Classification (2020)}: Primary 31C25, 60H15, 60J60 ; 
Secondary 30L15, 53C21, 58J35, 35K05, 42B05, 47D08

\section{Statement of Main Theorem}\label{sec:StatementMain}
\subsection{Framework}\label{subsec:Frame}
Let $(X,\tau)$ be a topological Lusin space, i.e., a continuous injective image of a Polish 
space, endowed with a $\sigma$-finite Borel measure $\m$ on $X$ with full topological support. 
Let $(\mathscr{E},D(\mathscr{E}))$ be a quasi-regular symmetric strongly local Dirichlet space on $L^2(X;\m)$ 
and $(P_t)_{t\geq0}$ the associated symmetric sub-Markovian strongly continuous semigroup on $L^2(X;\m)$ (see \cite[Chapter IV, Definition~3]{MR} for the quasi-regularity and see \cite[Theorem~5.1(i)]{Kw:func} for the strong locality). 
Then there exists an $\m$-symmetric special standard process ${\bf X}=(\Omega, X_t, \P_x)$ 
associated with $(\mathscr{E},D(\mathscr{E}))$, i.e. for $f\in L^2(X;\m)\cap \mathscr{B}(X)$, 
$P_tf(x)=\E_x[f(X_t)]$ $\m$-a.e.~$x\in X$ (see \cite[Chapter IV, Section~3]{MR}). 
Here $\mathscr{B}(X)$ denotes the family of Borel measurable functions on $M$ (the symbol $\mathscr{B}(X)$ is also used for the family of Borel measurable subsets of $M$ in some context). 
Here $\mathscr{B}(X)$ denotes the family of Borel measurable functions on $M$ (the symbol $\mathscr{B}(X)$ is also used for the family of Borel measurable subsets of $M$ in some context). 
An increasing sequence $\{F_n\}$ of closed subsets is called an \emph{$\mathscr{E}$-nest} if 
$\bigcup_{n=1}^{\infty}D(\mathscr{E})_{F_n}$ is $\mathscr{E}_1$-dense in $D(\mathscr{E})$, where 
$D(\mathscr{E})_{F_n}:=\{u\in D(\mathscr{E})\mid u=0\;\m\text{-a.e.~on }X\setminus F_n\}$. 
A subset $N$ is said to be \emph{$\mathscr{E}$-exceptional} or \emph{$\mathscr{E}$-polar} if 
there exists an $\mathscr{E}$-nest $\{F_n\}$ such that $N\subset \bigcap_{n=1}^{\infty}(X\setminus F_n)$. 
It is known that any $\mathscr{E}$-exceptional set is $\m$-negligible and for any $\m$-negligible 
$\mathscr{E}$-quasi-open set is $\mathscr{E}$-exceptional, in particular, for an $\mathscr{E}$-quasi continuous function $u$, $u\geq0$ $\m$-a.e. implies $u\geq0$ $\mathscr{E}$-q.e.   
For a statement $P(x)$ with respect to $x\in X$, we say that $P(x)$ holds $\mathscr{E}$-q.e.~$x\in X$ (simply $P$ holds $\mathscr{E}$-q.e.) if the set $\{x\in X\mid P(x)\text{ holds }\}$ is $\mathscr{E}$-exceptional. 
A subset $G$ of $X$ is called an \emph{$\mathscr{E}$-quasi-open set} if there exists an $\mathscr{E}$-nest $\{F_n\}$ such that $G\cap F_n$ is an open set of $F_n$ with respect to the relative topology on 
$F_n$ for each $n\in\mathbb{N}$. A function $u$ on $X$ is said to be \emph{$\mathscr{E}$-quasi continuous} if there exists an $\mathscr{E}$-nest $\{F_n\}$ such that $u|_{F_n}$ is continuous on $F_n$ for each $n\in\mathbb{N}$.

It is known that for $u,v\in D(\mathscr{E})\cap L^{\infty}(X;\m)$ there exists a unique signed finite Borel 
measure $\mu_{\langle u,v\rangle}$ on $X$ such that 
\begin{align*}
2\int_X\tilde{f}\d\mu_{\langle u,v\rangle}=\mathscr{E}(uf,v)+\mathscr{E}(vf,u)-\mathscr{E}(uv,f)\quad \text{ for }\quad u,v\in D(\mathscr{E})\cap L^{\infty}(X;\m).
\end{align*}
Here $\tilde{f}$ denotes the $\mathscr{E}$-quasi-continuous $\m$-version of $f$ (see \cite[Theorem~2.1.3]{FOT}, \cite[Chapter IV, Proposition~3.3(ii)]{MR}). 
We set $\mu_{\langle f\rangle}:=\mu_{\langle f,f\rangle}$ for $f\in D(\mathscr{E})\cap L^{\infty}(X;\m)$. 
Moreover, 
for $f,g\in D(\mathscr{E})$, there exists a signed finite measure $\mu_{\langle f,g\rangle}$ on $X$ such that 
$\mathscr{E}(f,g)=\mu_{\langle f,g\rangle}(X)$, hence $\mathscr{E}(f,f)=\mu_{\langle f\rangle}(X)$. 
We assume $(\mathscr{E},D(\mathscr{E}))$ admits a carr\'e-du-champ $\Gamma$, i.e. 
$\mu_{\langle f\rangle}\ll\m$ for all $f\in D(\mathscr{E})$ and set $\Gamma(f):=\d\mu_{\langle f\rangle}/\d\m$. 
Then $\mu_{\langle f,g\rangle}\ll\m$ for all $f,g\in D(\mathscr{E})$ and $\Gamma(f,g):=
\d\mu_{\langle f,g\rangle}/{\d\m}\in L^1(X;\m)$ is expressed by $\Gamma(f,g)=\frac14(\Gamma(f+g)-\Gamma(f-g))$ for $f,g\in D(\mathscr{E})$. 

Fix $q\in\{1,2\}$. Let $\kappa$ be a signed smooth measure with its Jordan-Hahn decomposition $\kappa=\kappa^+-\kappa^-$. 
We assume that $\kappa^+$ is smooth measure of Dynkin class ($\kappa^+\in S_D({\bf X})$ in short) 
and $2\kappa^-$ is a smooth measure of extended Kato class ($2\kappa^-\in S_{E\!K}({\bf X})$ in short). 
More precisely, $\nu\in S_D({\bf X})$ (resp.~$\nu\in S_{E\!K}({\bf X})$) if and only if $\nu\in S({\bf X})$ and 
$\m\text{-}\sup_{x\in X}\E_x[A_t^{\nu}]<\infty$ for any/some $t>0$ 
(resp.~$\lim_{t\to0}\m\text{-}\sup_{x\in X}\E_x[A_t^{\nu}]<1$) (see \cite{AM:AF}). 
Here $S({\bf X})$ denotes the family of smooth measures with respect to ${\bf X}$ (see \cite[Chapter VI, Definition~2.3]{MR}, \cite[p.~83]{FOT} for the definition of smooth measures) and $\m$-$\sup_{x\in X}f(x)$ denotes the $\m$-essentially supremum for  a function $f$ on $X$. 
Then, for $q\in\{1,2\}$, the quadratic form 
\begin{align*}
\mathscr{E}^{q\kappa}(f):=\mathscr{E}(f)+\langle q\kappa, \tilde{f}^2\rangle
\end{align*}
with finiteness domain $D(\mathscr{E}^{q\kappa})=D(\mathscr{E})$ is closed, lower semi-bounded, moreover, 
there exists $\alpha_0>0$ and $C>0$ such that 
\begin{align*}
C^{-1}\mathscr{E}_1(f,f)\leq \mathscr{E}^{q\kappa}_{\alpha_0}(f,f)\leq C\mathscr{E}_1(f,f)\quad \text{ for all }\quad f\in D(\mathscr{E}^{q\kappa})=D(\mathscr{E})
\end{align*}
(see \cite[(3.3)]{CFKZ:Pert} and \cite[Assumption of Theorem~1.1]{CFKZ:GenPert}).   
The Feynman-Kac semigroup $(p_t^{q\kappa})_{t\geq0}$ defined by 
\begin{align*}
p_t^{q\kappa}f(x)=\E_x[e^{-qA_t^{\kappa}}f(X_t)],\quad f\in \mathscr{B}_b(X)
\end{align*}
is $\m$-symmetric, i.e. 
\begin{align*}
\int_Xp_t^{q\kappa}f(x)g(x)\m(\d x)=\int_Xf(x)p_t^{q\kappa}g(x)\m(\d x)\quad\text{ for all }\quad f,g\in \mathscr{B}_+(X)
\end{align*} 
and coincides with the strongly continuous semigroup $(P_t^{q\kappa})_{t\geq0}$ on $L^2(X;\m)$ associated with 
$(\mathscr{E}^{q\kappa}, D(\mathscr{E}^{q\kappa}))$ (see \cite[Theorem~1.1]{CFKZ:GenPert}). 
Here $A_t^{\kappa}$ is a continuous additive functional (CAF in short) associated with the signed smooth measure $\kappa$ under Revuz correspondence. 
Under $2\kappa^{-}\in S_{E\!K}({\bf X})$ and $p\in[2,+\infty]$, 
$(p_t^{\kappa})_{t\geq0}$ can be extended 
to be a bounded operator on $L^{p}(X;\m)$ denoted by  $P_t^{\kappa}$ such that 
there exist finite constants $C=C(\kappa)>0, C_{\kappa}\geq0$ depending only on $\kappa^-$ such that 
for every $t\geq0$
\begin{align}
\|P_t^{\kappa}\|_{p,p}\leq Ce^{C_{\kappa}t}.\label{eq:KatoContraction}
\end{align}
Here $C=1$ and 
$C_{\kappa}\geq0$  
can be taken to be $0$ under $\kappa^-=0$ (cf. \cite[Theorem~2.2]{Sznitzman}). When $\kappa=-R\m$ for a constant $R\in \R$, $C_{\kappa}$ (resp.~$C$) can be taken to be $R\lor 0$ (resp.~$1$). Let $\Delta^{q\kappa}$  be the $L^2$-generator associated with $(\mathscr{E}^{q\kappa},D(\mathscr{E}))$ 
defined by   
\begin{align}
\left\{\begin{array}{ll} D(\Delta^{q\kappa})&:=\{u\in D(\mathscr{E})\mid \text{there exists } w\in L^2(X;\m)\text{ such that }\\
&\hspace{3cm}\mathscr{E}^{q\kappa}(u,v)=-\int_Xwv\,\d\m\quad \text{ for any }\quad v\in D(\mathscr{E})\}, \\ \Delta^{q\kappa} u&:=w\quad\text{ for } w\in L^2(X;\m)\quad\text{specified as above,} \end{array}\right.\label{eq:generatorL2}
\end{align}
called the {\it Schr\"odinger operator} with potential $q\kappa$.  
Formally, $\Delta^{q\kappa}$ can be understood as \lq\lq$\Delta^{q\kappa}=\Delta-q\kappa$\rq\rq, 
where $\Delta$ is the $L^2$-generator associated with $(\mathscr{E},D(\mathscr{E}))$.  
\begin{defn}[{{\bf {\boldmath$q$}-Bakry-\'Emery condition}}]
{\rm 
Suppose that $q\in\{1,2\}$, $\kappa^+\in S_D({\bf X})$, $2\kappa^-\in S_{E\!K}({\bf X})$ and $N\in[1,+\infty]$. We say that $(X,\mathscr{E},\m)$ or simply $X$ satisfies the $q$-Bakry-\'Emery condition, briefly 
${\sf BE}_q(\kappa,N)$, if for every $f\in  D(\Delta)$ with $\Delta f\in D(\mathscr{E})$ 
and every nonnegative $\phi\in D(\Delta^{q\kappa})$ with 
$\Delta^{q\kappa}\phi\in L^{\infty}(X;\m)$ (we further impose $\phi\in L^{\infty}(X;\m)$ for $q=2$), we have 
\begin{align*}
\frac{1}{q}\int_X\Delta^{q\kappa}\phi \Gamma(f)^{\frac{q}{2}}\d\m-\int_X\phi
\Gamma(f)^{\frac{q-2}{2}}
\Gamma(f,\Delta f)\d\m
\geq \frac{1}{N}\int_X\phi\Gamma(f)^{\frac{q-2}{2}}(\Delta f)^2\d\m.
\end{align*}
The latter term is understood as $0$ if $N=\infty$.
}
\end{defn}

\begin{assumption}\label{asmp:Tamed}
{\rm We assume that $X$ satisfies ${\sf BE}_{2}(\kappa,\infty)$ condition for a given signed smooth measure  
$\kappa$ with $\kappa^+\in S_D({\bf X})$ and $2\kappa^-\in S_{E\!K}({\bf X})$. 
}
\end{assumption}

Under Assumption~\ref{asmp:Tamed}, we say that $(X,\mathscr{E},\m)$ or simply $X$ is {\it tamed}. 
In fact, under $\kappa^+\in S_D({\bf X})$ and $2\kappa^-\in S_{E\!K}({\bf X})$, the condition ${\sf BE}_2(\kappa,\infty)$ is {\it equivalent} 
to ${\sf BE}_1(\kappa,\infty)$ (see Lemma~\ref{lem:BakryEmeryEquivalence} below), in particular, the heat flow $(P_t)_{t\geq0}$ 
satisfies 
\begin{align}
\Gamma(P_tf)^{1/2}\leq P_t^{\kappa}\Gamma(f)^{1/2}\quad\m\text{-a.e.~for any }f\in D(\mathscr{E})\quad \text{ and }\quad t\geq0\label{eq:gradCont}
\end{align}
(see \cite[Definition~3.3 and Theorem~3.4]{ERST}).
The inequality \eqref{eq:gradCont} plays a crucial role in our paper. 
Note that our condition $\kappa^+\in S_D({\bf X})$, $\kappa^-\in S_{E\!K}({\bf X})$ (resp.~$\kappa^+\in S_D({\bf X})$, $2\kappa^-\in S_{E\!K}({\bf X})$) is stronger than the $1$-moderate (resp.~$2$-moderate) condition treated in \cite{ERST} for the definition of tamed space.  
The $\m$-symmetric Markov process ${\bf X}$ treated in our paper may not be conservative in general. Under Assumption~\ref{asmp:Tamed}, 
a sufficient condition ({\it intrinsic stochastic completeness} called in \cite{ERST}) 
for conservativeness of ${\bf X}$ is discussed in \cite[Section~3.3]{ERST}, in 
particular, under $1\in D(\mathscr{E})$ with $\mathscr{E}(1)=0$ and Assumption~\ref{asmp:Tamed}, we have the conservativeness of ${\bf X}$.

Let us introduce the Littlewood-Paley $G$-functions. To do this, we recall 
the subordination of semigroups. For $t\geq0$, define a probability measure 
$\lambda_t$ on $[0,+\infty[$ whose Laplace transform 
is given by 
\begin{align*}
\int_0^{\infty}e^{-\gamma s}\lambda_t(\d s)=e^{-\sqrt{\gamma}t},\quad \gamma\geq0.
\end{align*}
Then, for $t>0$, $\lambda_t$ has the following expression
\begin{align*}
\lambda_t(\d s):=\frac{t}{2\sqrt{\pi}}e^{-t^2/4s}s^{-3/2}\d s.
\end{align*}
For $\alpha\geq C_{\kappa}$, we define the subordination $(Q_t^{(\alpha),\kappa})_{t\geq0}$ of $(P_t^{\kappa})_{t\geq0}$ by 
\begin{align*}
Q_t^{(\alpha),\kappa}f:=\int_0^{\infty}e^{-\alpha s}P_s^{\kappa}f\,\lambda_t(\d s),\quad f\in  L^p(X;\m).
\end{align*}
When $\kappa=0$, we write $Q_t^{(\alpha)}$ instead of $Q_t^{(\alpha),0}$. Then we can easily see that for $p\in[2,+\infty]$  
\begin{align}
\|Q_t^{(\alpha),\kappa}f\|_{L^p(X;\m)}&\leq \int_0^{\infty}e^{-\alpha s}\|P_s^{\kappa}f\|_{L^p(X;\m)}\lambda_t(\d s)\notag\\
 &\leq C\left(\int_0^{\infty}e^{-(\alpha-C_{\kappa}) s}\lambda_t(\d s) \right)\|f\|_{L^p(X;\m)}\label{eq:Contra}\\
 &=Ce^{-\sqrt{\alpha-C_{\kappa}}t}\|f\|_{L^p(X;\m)}.\notag
\end{align} 
The infinitesimal generator of 
$(Q_t^{(\alpha),\kappa})_{t\geq0}$ is denoted by $-\sqrt{\alpha-\Delta_p^{\kappa}}$. We may omit the subscript $p$ for simplicity. 

For $f\in L^2(X;\m)\cap L^p(X;\m)$ and $\alpha\geq C_{\kappa}$, we define the Littlewood-Paley's $G$-functions by 
\begin{align*}
{g_{\stackrel{\rightarrow}{f}}}^{\kappa}(x,t)&:=\left|\frac{\partial}{\partial t}(Q_t^{(\alpha),\kappa}f)(x) \right|,\qquad\qquad\qquad {G_{\stackrel{\rightarrow}{f}}}^{\!\!\kappa}(x):=\left(\int_0^{\infty}t{g_{\stackrel{\rightarrow}{f}}}^{\!\!\kappa}(x,t)^2\d t  \right)^{1/2},\\
{g_f^{\uparrow}}^{\stackrel{}{\kappa}}
(x,t)&:=\left(\Gamma(Q_t^{(\alpha),\kappa}f) \right)^{1/2}(x),\qquad\qquad\quad {G_f^{\uparrow}}^{\kappa}(x):=\left(\int_0^{\infty}t{g_f^{\uparrow}}^{\kappa}(x,t)^2\d t  \right)^{1/2},\\
g_f^{\kappa}(x,t)&:=\sqrt{{g_{\stackrel{\rightarrow}{f}}}^{\kappa}(x,t)^2+
{g_f^{\uparrow}}^{\kappa}
(x,t)^2}, \qquad\quad G_f^{\;\kappa}(x):=\left(\int_0^{\infty}tg_f^{\kappa}(x,t)^2\d t \right)^{1/2}.
\end{align*}
For notational simplicity, we omit $\kappa$ in $G$-functions when we replace $P_t^{\kappa}$ with $P_t$, e.g. 
 we write ${g_f^{\rightarrow}}(x,t)$ (resp.~${G_f^{\rightarrow}}(x)$) instead of 
${g_f^{\rightarrow}}^{0}(x,t)$ (resp.~${G_f^{\rightarrow}}^{0}(x)$) and so on. 

Now we present the Littlewood-Paley-Stein inequality. In what follows, the notation 
$\|u\|_{L^p(X;\m)}\lesssim \|v\|_{L^p(X;\m)}$ stands for 
$\|u\|_{L^p(X;\m)}\leq A(\kappa)\|v\|_{L^p(X;\m)}$, 
where $A(\kappa)$ is a positive constant depending only on $\kappa$ and $p\in[1,+\infty[$.

\begin{thm}\label{thm:main1}
For any $p\in]1,+\infty[$ and $\alpha\geq C_{\kappa}$ and $\alpha>0$, 
the $G$-functions $g_f^{\rightarrow}(\cdot,t)$, $g_f^{\uparrow}(\cdot,t)$, $g_f(\cdot,t)$, 
$G_f^{\rightarrow}$, $G_f^{\uparrow}$ and $G_f$ can be extended to be in $L^p(X;\m)$ for $f\in L^p(X;\m)$
and the following inequalities hold for 
$f\in L^p(X;\m)$
\begin{align}
\|G_f\|_{L^p(X;\m)}&\lesssim\|f\|_{L^p(X;\m)},\label{eq:LittlewoodPaleyStein1}\\
\|f\|_{L^p(X;\m)}&\lesssim\| G_f^{\rightarrow}\|_{L^p(X;\m)}.\label{eq:LittlewoodPaleyStein2}
\end{align}
Moreover, \eqref{eq:LittlewoodPaleyStein1} remains valid for $\alpha>0$ under $p\in]1,2]$. 
Furthermore, for $p\in]1,+\infty[$, \eqref{eq:LittlewoodPaleyStein1} remains valid for $\alpha=0$ under $\kappa^-=0$ {\rm(}hence $C_{\kappa}=0${\rm)}, and \eqref{eq:LittlewoodPaleyStein2} holds for  
 $\alpha=0$, $\kappa^-=0$ and the transience
of $(\mathscr{E},D(\mathscr{E}))$. 
Suppose further that $(\mathscr{E},D(\mathscr{E}))$ is transient or irreducible, $\alpha=0$ and $\kappa^-=0$. 
Then, for any $p\in]1,+\infty[$, 
the following inequalities hold for 
$f\in L^p(X;\m)\cap L^2(X;\m)$: $E_of\in L^p(X;\m)$, 
\begin{align}
\|G_f\|_{L^p(X;\m)}&\lesssim\|f-E_{o}f\|_{L^p(X;\m)},\label{eq:LittlewoodPaleyStein1+}\\
\|f-E_{o}f\|_{L^p(X;\m)}&\lesssim\| G_f^{\rightarrow}\|_{L^p(X;\m)},\label{eq:LittlewoodPaleyStein2+}\\
\|f-E_{o}f\|_{L^p(X;\m)}&\lesssim\| G_{f}^{\uparrow}\|_{L^p(X;\m)}.\label{eq:LittlewoodPaleyStein3+}
\end{align}
Here $E_{o}:=E_0-E_{0-}$ and $(E_{\lambda})_{\lambda\in\R}$ is a real resolution of the identity associated to the $L^2$-generator $\Delta$, {\rm(}in another word $E_o:=E_{\{0\}}$ and $E_{\lambda}:=E_{]-\infty,\lambda]}${\rm)}
 {\rm(}see \cite[Chapter IX, Section~5]{Yosida} for resolution of the identity{\rm)}. 
\end{thm}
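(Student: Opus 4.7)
The plan is to adapt Stein's classical approach to the Littlewood-Paley-Stein inequality to the tamed Dirichlet setting, using the gradient estimate \eqref{eq:gradCont} in place of pointwise Bakry-\'Emery information. First, the $L^2$-case follows from spectral calculus: writing $Q_t^{(\alpha)} = e^{-t\sqrt{\alpha-\Delta}}$ and using the scalar identity $\int_0^\infty t(\alpha-\lambda)e^{-2t\sqrt{\alpha-\lambda}}\,\d t = 1/4$ (valid on $\sigma(\Delta)\subset(-\infty,0]$), Fubini yields $\|G_f^{\rightarrow}\|_{L^2(X;\m)}^2 = \tfrac14\|(I-E_o)f\|_{L^2(X;\m)}^2$ (plus $\tfrac14\|E_of\|_{L^2(X;\m)}^2$ when $\alpha>0$) and $\|G_f^{\uparrow}\|_{L^2(X;\m)}^2 \leq \tfrac14\|f\|_{L^2(X;\m)}^2$, settling \eqref{eq:LittlewoodPaleyStein1} and \eqref{eq:LittlewoodPaleyStein2} at $p=2$.

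For the upper bound \eqref{eq:LittlewoodPaleyStein1} at $p > 2$, the strong locality of $(\mathscr{E}, D(\mathscr{E}))$ furnishes the diffusion chain rule for $\Delta$; applied to $F_t := Q_t^{(\alpha)}f > 0$ together with $\partial_t^2 F_t = (\alpha - \Delta)F_t$, it yields the Littlewood-Paley identity
\begin{align*}
\partial_t^2 \int_X F_t^p\,\d\m \;=\; p\alpha\int_X F_t^p\,\d\m + p(p-1)\int_X F_t^{p-2}\bigl[(\partial_t F_t)^2 + \Gamma(F_t)\bigr]\,\d\m.
\end{align*}
Multiplying by $t$ and integrating (boundary terms vanish by decay of $F_t$) gives $\int_X\int_0^\infty t F_t^{p-2} g_f(x,t)^2\,\d t\,\d\m \leq \|f\|_{L^p(X;\m)}^p/p(p-1)$; combined with Stein's maximal theorem $\|\sup_t |F_t|\|_{L^p(X;\m)} \lesssim \|f\|_{L^p(X;\m)}$ (valid since $(Q_t^{(\alpha)})$ is symmetric sub-Markovian) and the standard Hardy-type manipulation $\int G_f^p\,\d\m \lesssim \|\sup_t F_t\|_{L^p(X;\m)}^{p-2} \|G_f\|_{L^p(X;\m)}^2$, this converts into $\|G_f\|_{L^p(X;\m)} \lesssim \|f\|_{L^p(X;\m)}$. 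The gradient estimate \eqref{eq:gradCont} enters specifically in controlling the vertical component: Minkowski for the seminorm $\Gamma(\cdot)^{1/2}$ applied to the subordinator representation $Q_t^{(\alpha)}f = \int_0^\infty e^{-\alpha s}P_sf\,\lambda_t(\d s)$, together with \eqref{eq:gradCont}, gives the pointwise bound $g_f^{\uparrow}(x,t) \leq Q_t^{(\alpha),\kappa}\Gamma(f)^{1/2}(x)$, and the $L^p$-estimate \eqref{eq:Contra} for $p \geq 2$, valid when $\alpha \geq C_\kappa$, controls the resulting integral.

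The reverse inequality \eqref{eq:LittlewoodPaleyStein2} follows by polarizing the Step-1 spectral identity into $\langle f, g\rangle_{L^2(X;\m)} = 4\int_0^\infty t\langle \partial_t Q_t^{(\alpha)}f, \partial_t Q_t^{(\alpha)}g\rangle_{L^2(X;\m)}\,\d t$ and applying H\"older duality against \eqref{eq:LittlewoodPaleyStein1} at the conjugate exponent $p'$. The extensions are then routine bookkeeping: \eqref{eq:LittlewoodPaleyStein1} for $p\in(1,2]$ with $\alpha>0$ follows by duality from \eqref{eq:LittlewoodPaleyStein2} at $p'\in[2,\infty)$; under $\kappa^{-}=0$ one has $C=1$ and $C_\kappa=0$ in \eqref{eq:KatoContraction}, and every estimate persists for $\alpha=0$; finally, transience/irreducibility yields the decay $Q_t^{(\alpha)}f\to E_of$ as $t\to\infty$ that supports the integration-by-parts boundary terms, so replacing $f$ by $f-E_of$ returns \eqref{eq:LittlewoodPaleyStein1+}--\eqref{eq:LittlewoodPaleyStein3+}.

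The principal technical obstacle is coupling the pointwise gradient estimate \eqref{eq:gradCont}—which invokes the perturbed Feynman-Kac semigroup $P_t^{\kappa}$—with the $L^p$-contractivity \eqref{eq:KatoContraction}, which is exactly what the extended-Kato hypothesis $2\kappa^{-}\in S_{E\!K}({\bf X})$ supplies for $p\geq 2$. This is what forces the threshold $\alpha\geq C_\kappa$ in the main statement; tracking how $C_\kappa$ collapses when $\kappa^{-}=0$—and using duality to reach $p<2$—is the delicate step that extends the result across the full range $p\in(1,\infty)$ and down to $\alpha=0$.
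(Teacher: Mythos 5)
Your proposal rests on the analytic (Stein--Bakry--Shigekawa--Yoshida) route, which is precisely the one the paper explicitly declines to follow: Remark~2.6 and the opening of Section~3 point out that ${\rm Test}(X)$ is not in general a subalgebra of $C_b(X)$ for a tamed Dirichlet space, so the functional-analytic manipulation of $\Delta(Q_t^{(\alpha)}f)^p$ that Shigekawa--Yoshida use cannot be made rigorous here. The paper instead carries out a probabilistic version (Kawabi--Miyokawa style): it builds the product diffusion $\widehat{X}_t=(X_t,B_t)$ on $X\times\R$, proves $v(x,a)=q_{|a|}^{(\alpha),\kappa}f(x)\in D(\widehat{\mathscr{E}})$ (Lemma~3.3), derives the Fukushima semimartingale decomposition of $\tilde v(\widehat{X}_{t\wedge\tau})$ (Proposition~3.5), and then runs It\^o on $(V_t^2+\eps)^{p/2}$. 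This is a genuinely different way to obtain the same inequality, built so that it survives the lack of a continuous core.

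The more serious problem is that you have the $p$-ranges reversed, and the step you label ``$p>2$'' does not work there. Your Littlewood--Paley identity $\partial_t^2\int_X F_t^p\,\d\m=p\alpha\int F_t^p+p(p-1)\int F_t^{p-2}[(\partial_t F_t)^2+\Gamma(F_t)]$, integrated against $t\,\d t$, gives $p(p-1)\int\int_0^\infty tF_t^{p-2}g_f^2\,\d t\,\d\m\le\|f\|_p^p$. To convert that into a bound on $G_f$, one writes $g_f^2=F_t^{2-p}\cdot F_t^{p-2}g_f^2$ and pulls the factor $(\sup_t F_t)^{2-p}$ out; this requires $2-p>0$, i.e.\ $1<p<2$, and then H\"older with exponents $\tfrac{2}{2-p},\tfrac{2}{p}$ plus the maximal ergodic theorem closes the estimate. (Your written form $\int G_f^p\lesssim\|\sup_t F_t\|_p^{p-2}\|G_f\|_p^2$ has the exponents garbled; the correct version is $\int G_f^p\le\|\sup_t F_t\|_p^{p(2-p)/2}\left(\int\int tF_t^{p-2}g_f^2\right)^{p/2}$.) For $p>2$ the weight $F_t^{p-2}$ is a positive power of $F_t$ and cannot be discarded --- when $F_t$ is small the weighted integral carries no information about $g_f^2$ --- so this route simply fails. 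In the paper, $p>2$ requires entirely different machinery: the auxiliary $H$-functions $H_f^{\kappa}$ built from $Q_t^{(\alpha),\kappa}$ (Subsection~3.3), the Lenglart--L\'epingle--Pratelli submartingale inequality~\eqref{eq:LLP} to get $\|H_f^{\kappa}\|_p\lesssim\|f\|_p$ (Proposition~3.8), and Proposition~3.9 which uses the Bakry--\'Emery bound \eqref{eq:gradCont} precisely to dominate $G_f^{\uparrow}$ by ${H_f^{\uparrow}}^{\kappa}$ and $G_f^{\rightarrow}$ by $H_f^{\rightarrow}$. That is where the gradient estimate actually does the work --- not in the LP-identity step you sketched. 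As a result, your outline covers $1<p<2$ (modulo the rigor issue), gives no valid argument for $p>2$, and the duality step then inherits the gap.
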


In Theorem~\ref{thm:main1}, there is no estimate of $G_f^{\uparrow}$ when $\alpha>0$. 
If we assume the spectral gap for $(\mathscr{E},D(\mathscr{E}))$, i.e., there exists small $\eps>0$ such that 
$E_{]0,\eps[}=0$, then the Green operator $G_0$ 
with index $0$ is a bounded linear operator on $L^2(X;\m)$ (see \cite[(4.42)]{ShigekawaText}), so that we can deduce the same estimate 
\eqref{eq:LittlewoodPaleyStein2} for $G_f^{\uparrow}$ holds by way of the argument in \cite[Subsection~3.2.11]{ShigekawaText} (see the last statement of Subsection~\ref{subsec:LastProof} below).  

\medskip

As an application of Theorem~\ref{thm:main1}, we provide the following: 

\begin{thm}\label{thm:main2}
Let $p\in]1,+\infty[$, $q\in]1,+\infty[$ and $\alpha> C_{\kappa}$. We define 
\begin{align*}
R_{\alpha}^{(q)}(\Delta)f:=\Gamma\left((\alpha-\Delta)^{-\frac{q}{2}}f \right)^{1/2},\qquad f\in L^p(X;\m)\cap L^2(X;\m).
\end{align*}
Then we have the following statements:
\begin{enumerate}
\item[{\rm (1)}] For any $p\in[2,+\infty[$, $R_{\alpha}^{(q)}(\Delta)$ can be extended to a bounded operator on $L^p(X;\m)$. 
The operator norm $\|R_{\alpha}^{(q)}(\Delta)\|_{p,p}$ depends only on $\kappa,p,q$ and $C_{\kappa}$. This implies the inclusion
\begin{align*}
D((I-\Delta_p)^{\frac{q}{2}})\subset H^{1,p}(X;\m),
\end{align*}
where $H^{1,p}(X;\m)$ is the $\|\cdot\|_{H^{1,p}}$-completion of 
\begin{align*}
\mathscr{D}_{1,p}:=\{f\in L^p(X;\m)\cap D(\mathscr{E})\mid \Gamma(f)^{\frac12}\in L^p(X;\m)\}
\end{align*} 
with respect to the 
$(1,p)$-Sobolev norm $\|\cdot\|_{H^{1,p}}$ defined by 
\begin{align*}
\|f\|_{H^{1,p}}:=\left(\|f\|_{L^p(X;\m)}^p+\|\Gamma(f)^{\frac12}\|_{L^p(X;\m)}^p \right)^{\frac{1}{p}}. 
\end{align*}
\item[{\rm (2)}] For any $p\in[2,+\infty[$ and $q\in]1,2[$, there exists a positive constant $C_{p,q}$ such that 
\begin{align}
\|\Gamma(P_tf)^{1/2}\|_{L^p(X;\m)}\leq C_{p,q}\|R_{\alpha}^{(q)}(\Delta)\|_{p,p}(\alpha^{q/2}+t^{-q/2})
\|f\|_{L^p(X;\m)},\quad t>0,\quad f\in L^p(X;\m).
\end{align}
\end{enumerate}
\end{thm}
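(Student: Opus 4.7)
The plan is to derive both parts from the Bochner integral representation of fractional powers of $\alpha-\Delta$, together with Theorem~\ref{thm:main1} and the gradient contraction \eqref{eq:gradCont}.

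For part~(1), since $-\sqrt{\alpha-\Delta}$ generates $(Q_t^{(\alpha)})_{t\geq 0}$, spectral calculus gives $(\alpha-\Delta)^{-q/2}f=\Gamma(q)^{-1}\int_0^\infty t^{q-1}Q_t^{(\alpha)}f\,\d t$ for $f\in L^p(X;\m)\cap L^2(X;\m)$ and $q>0$. Bilinearity of $\Gamma$ together with Cauchy--Schwarz applied to the Bochner integral yields the pointwise estimate $R_\alpha^{(q)}(\Delta)f(x)\leq\Gamma(q)^{-1}\int_0^\infty t^{q-1}g_f^{\uparrow}(x,t)\,\d t$, so by Minkowski's integral inequality it suffices to bound $\int_0^\infty t^{q-1}\|g_f^{\uparrow}(\cdot,t)\|_{L^p(X;\m)}\,\d t$. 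I split the $t$-integral at $t=1$. For $t\in(0,1)$, the subordination $Q_t^{(\alpha)}=\int_0^\infty e^{-\alpha s}P_s\lambda_t(\d s)$ combined with a Kato-type short-time estimate $\|\Gamma(P_sh)^{1/2}\|_{L^p(X;\m)}\lesssim s^{-1/2}\|h\|_{L^p(X;\m)}$ (derived in the tame setting from the Duhamel identity $P_s(h^2)-(P_sh)^2=2\int_0^s P_r\Gamma(P_{s-r}h)\,\d r$ and \eqref{eq:gradCont}) yields $\|g_f^{\uparrow}(\cdot,t)\|_{L^p(X;\m)}\lesssim t^{-1}\|f\|_{L^p(X;\m)}$, and then $\int_0^1 t^{q-2}\,\d t$ converges precisely because $q>1$. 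For $t\geq 1$, writing $Q_t^{(\alpha)}=Q_{t/2}^{(\alpha)}\circ Q_{t/2}^{(\alpha)}$ and applying \eqref{eq:gradCont} to the inner factor gives the pointwise bound $g_f^{\uparrow}(\cdot,t)\leq Q_{t/2}^{(\alpha),\kappa}g_f^{\uparrow}(\cdot,t/2)$, which together with \eqref{eq:Contra} bootstraps to exponential decay $\|g_f^{\uparrow}(\cdot,t)\|_{L^p(X;\m)}\lesssim e^{-ct}\|f\|_{L^p(X;\m)}$ with $c=\sqrt{\alpha-C_\kappa}/2>0$, so the tail integral is finite. Extending by density from $L^p\cap L^2$ to $L^p$ establishes the operator bound; setting $f:=(I-\Delta_p)^{q/2}g$ yields the inclusion $D((I-\Delta_p)^{q/2})\subset H^{1,p}(X;\m)$ at once.

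For part~(2), I write $P_tf=(\alpha-\Delta)^{-q/2}\bigl((\alpha-\Delta)^{q/2}P_tf\bigr)$ and apply the $L^p$-boundedness from (1) to $h:=(\alpha-\Delta)^{q/2}P_tf$, obtaining $\|\Gamma(P_tf)^{1/2}\|_{L^p(X;\m)}\leq\|R_\alpha^{(q)}(\Delta)\|_{p,p}\|(\alpha-\Delta)^{q/2}P_tf\|_{L^p(X;\m)}$. For $q/2\in(1/2,1)$, Balakrishnan's formula $(\alpha-\Delta)^{q/2}g=\Gamma(-q/2)^{-1}\int_0^\infty\tau^{-q/2-1}(e^{-\alpha\tau}P_\tau g-g)\,\d\tau$ applied to $g=P_tf$, together with the elementary bounds $1-e^{-\alpha\tau}\leq\min(\alpha\tau,1)$ and $\|(P_\tau-I)P_tf\|_{L^p(X;\m)}\leq\min(C\tau/t,2)\|f\|_{L^p(X;\m)}$ (the latter from Stein's theorem on the analyticity of the symmetric submarkovian $(P_t)$ on $L^p(X;\m)$, giving $\|\Delta P_t\|_{p,p}\leq C/t$), produces two elementary integrals that evaluate to $C_q\alpha^{q/2}$ and $C_q t^{-q/2}$ after splitting at $\tau=1/\alpha$ and $\tau=t/C$ respectively. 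Hence $\|(\alpha-\Delta)^{q/2}P_tf\|_{L^p(X;\m)}\leq C_q(\alpha^{q/2}+t^{-q/2})\|f\|_{L^p(X;\m)}$, and the asserted estimate follows by combining the two displays.

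The principal technical obstacle is the Kato-type short-time estimate $\|\Gamma(P_sh)^{1/2}\|_{L^p(X;\m)}\lesssim s^{-1/2}\|h\|_{L^p(X;\m)}$ in the tame framework. In the classical Bakry--Emery $(K,\infty)$-setting this follows from integrating the Duhamel identity and invoking Jensen's inequality for the submarkovian $P_r$. With distributional $\kappa$, Jensen must be replaced by the Cauchy--Schwarz bound $(P_r^\kappa\phi)^2\leq(P_r^\kappa\1)(P_r^\kappa\phi^2)$ inside the Feynman--Kac integrand, producing a factor $\|P_r^\kappa\1\|_\infty\leq Ce^{C_\kappa r}$ that is benign on compact $r$-intervals by Assumption~\ref{asmp:Tamed}; propagating this through while preserving the $s^{-1/2}$-rate needed for the convergence of $\int_0^1 t^{q-2}\,\d t$ requires careful tracking of constants, but involves no idea beyond combining \eqref{eq:gradCont} with standard Dirichlet-form calculus.
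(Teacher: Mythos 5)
Your route for part~(1) differs from the one the paper indicates. The paper, following Kawabi--Miyokawa, starts from the same Bochner-integral representation bounding $R_\alpha^{(q)}(\Delta)f$ by a constant multiple of $\int_0^\infty t^{q-1}g_f^\uparrow(\cdot,t)\,\d t$, but then feeds in the exponential decay built into \eqref{eq:BEQ2} (established in the proof of Proposition~\ref{prop:GHEest}) and applies Cauchy--Schwarz in $t$: the resulting weight $t^{2q-3}e^{-\sqrt{\alpha-C_\kappa}t}$ is integrable on $]0,+\infty[$ exactly when $q>1$, and what remains is the $H$-function of Subsection~\ref{subsec:p>2}, controlled by Proposition~\ref{prop:ShigekawaYoshida}. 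That is the sense in which Theorem~\ref{thm:main2} is an application of Theorem~\ref{thm:main1}. You instead bypass the Littlewood--Paley machinery entirely, splitting the $t$-integral at $t=1$ and handling $t<1$ by a short-time Kato bound $\|\Gamma(P_sh)^{1/2}\|_{L^p(X;\m)}\lesssim s^{-1/2}\|h\|_{L^p(X;\m)}$ and $t\geq 1$ by the gradient contraction and \eqref{eq:Contra}.

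The crux of your plan is that short-time bound, and your sketched derivation of it contains an error. You propose the reverse-Poincar\'e route via the Duhamel identity, applying Cauchy--Schwarz in the form $(P_r^\kappa\phi)^2\leq(P_r^\kappa\1)(P_r^\kappa\phi^2)$ after \eqref{eq:gradCont}; this yields $\Gamma(P_sh)\leq(P_r^\kappa\1)\,P_r^\kappa\Gamma(P_{s-r}h)$. But the Duhamel identity $P_s(h^2)-(P_sh)^2=2\int_0^sP_r\Gamma(P_{s-r}h)\,\d r$ involves $P_r$, not $P_r^\kappa$, and there is no pointwise domination of $P_r^\kappa$ by a constant multiple of $P_r$ once $\kappa^-\neq0$ (the Feynman--Kac weight $e^{-A_r^\kappa}$ is unbounded above). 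The Cauchy--Schwarz you need is the other splitting, $(P_r^\kappa\phi)^2\leq(P_r^{2\kappa}\1)\,P_r(\phi^2)$, which gives $\Gamma(P_sh)\leq(P_r^{2\kappa}\1)\,P_r\Gamma(P_{s-r}h)$, matches the Duhamel integrand, and produces the factor $\|P_r^{2\kappa}\1\|_\infty\leq Ce^{C_{2\kappa^-}r}$, controlled precisely because the paper assumes $2\kappa^-\in S_{E\!K}({\bf X})$ and not merely $\kappa^-\in S_{E\!K}({\bf X})$. With this fix (and a density or interpolation argument passing from ${\rm Test}(X)$ to $L^p$, for instance Marcinkiewicz between the $L^2$ analyticity bound and the $L^\infty$ bound of Lemma~\ref{lem:boundedEst}), your route does go through for $p\in[2,+\infty[$; it is arguably more elementary in that it sidesteps Theorem~\ref{thm:main1}, though it therefore does not exploit the Littlewood--Paley bounds the paper proves.

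Part~(2) is fine and is the intended argument: factor $P_tf=(\alpha-\Delta)^{-q/2}(\alpha-\Delta)^{q/2}P_tf$, invoke part~(1), and bound $\|(\alpha-\Delta)^{q/2}P_t\|_{p,p}$ via Balakrishnan's formula together with Stein's theorem on the $L^p$-analyticity of symmetric sub-Markovian semigroups.
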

\begin{remark}
{\rm For each $p\in]1,+\infty[$, the $(1,p)$-Sobolev space $(H^{1,p}(X;\m),\|\cdot\|_{H^{1,p}})$ becomes a uniformly convex (hence reflexive) Banach space provided $(\mathscr{D}_{1,p},\|\cdot\|_{H^{1,p}})$ is closable on $L^p(X;\m)$ (see \cite[Theorem~1.8]{Kw:SobolevSpace}). 
}
\end{remark}
\begin{remark}\label{lem:XdLi}
{\rm The Littelwood-Paley-Stein inequalities for functions and $1$-forms were firstly proved by 
X.-D.~Li~\cite{Xdli:RieszTrans} in the framework of weighted complete Riemannian manifolds 
$(M,g,e^{-\phi}\nu_g)$ with $\phi\in C^2(M)$ if the Bakry-\'Emery Ricci curvature ${\rm Ric}_{\phi}^{\infty}:={\rm Ric}_g+{\rm Hess}\,(\phi)$ 
has a lower bound of Kato class function $K$ (see \cite[Theorem~4.2]
{Xdli:RieszTrans}). Here $\nu_g$ is the Riemannian volume measure. 
For this, he established gradient type estimates (see \cite[(4.8) and (4.9)]{Xdli:RieszTrans}).
}
\end{remark}
\begin{remark}\label{rem:forthcoming}
{\rm In the forthcoming paper \cite{EXK}, we establish Theorem~\ref{thm:main2}(1) for $q=1$ and $p\in]1,+\infty[$, i.e., so-called the boundedness of the Riesz transform $R_{\alpha}(\Delta):=R_{\alpha}^{(1)}(\Delta)$ on $L^p(X;\m)$ for $p\in]1,+\infty[$ based on the vector space calculus for tamed Dirichlet space developed by Braun~\cite{Braun:Tamed2021}.
}
\end{remark}

\section{Test functions}

Under Assumption~\ref{asmp:Tamed}, we now introduce ${\rm Test}(X)$ the set of test functions: 

\begin{defn}\label{def:TestFunc}
{\rm 
Let $(X,\mathscr{E},\m)$ be a tamed space. Let us define the set of {\it test functions} by 
\begin{align*}
{\rm Test}(X):&=\{f\in D(\Delta)\cap L^{\infty}(X;\m)\mid \Gamma(f)\in L^{\infty}(X;\m),\Delta f\in D(\mathscr{E})\}.
\end{align*}
}
\end{defn}

The following lemmas hold. 

\begin{lem}[{{\bf \cite[Proposition~6.8]{ERST}}}]\label{lem:boundedEst}
Under Assumption~\ref{asmp:Tamed}, ${\sf BE}_2(-\kappa^-,+\infty)$ holds. Moreover, for every $f\in L^2(X;\m)\cap L^{\infty}(X;\m)$ and $t>0$, it holds 
\begin{align}
\Gamma(P_tf)\leq\frac{1}{2t}\|P_t^{-2\kappa^-}\|_{\infty,\infty}\cdot\|f\|_{L^{\infty}(X;\m)}^2.\label{eq:BoundedEst}
\end{align}
In particular, for $f\in L^2(X;\m)\cap L^{\infty}(X;\m)$, then $P_tf\in {\rm Test}(X)$. 
\end{lem}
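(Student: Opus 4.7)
The plan is to split the lemma into two independent pieces. First I establish ${\sf BE}_2(-\kappa^-,+\infty)$ from Assumption~\ref{asmp:Tamed}; then I derive the reverse Poincar\'e-type inequality $2t\Gamma(P_tf)\leq P_t^{-2\kappa^-}(f^2)$ by a semigroup interpolation along the Feynman--Kac semigroup $(P_s^{-2\kappa^-})_{s\geq0}$, from which \eqref{eq:BoundedEst} is immediate via $P_t^{-2\kappa^-}(f^2)\leq\|P_t^{-2\kappa^-}\|_{\infty,\infty}\|f\|_{L^{\infty}(X;\m)}^2$.

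For the first step I would exploit the monotonicity of ${\sf BE}_2$ in the curvature measure. Since $\kappa=\kappa^+-\kappa^-\geq-\kappa^-$ and for nonnegative $\phi,g$ with appropriate regularity one has
\begin{align*}
\int_X(\Delta^{-2\kappa^-}\phi-\Delta^{2\kappa}\phi)\,g\,\d\m=\mathscr{E}^{2\kappa}(\phi,g)-\mathscr{E}^{-2\kappa^-}(\phi,g)=2\langle\kappa^+,\tilde\phi\tilde g\rangle\geq0,
\end{align*}
the defining inequality of ${\sf BE}_2(\kappa,\infty)$ (with $g=\Gamma(f)$) forces the corresponding inequality with $\kappa$ replaced by $-\kappa^-$. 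To bypass the subtlety that the domains $D(\Delta^{-2\kappa^-})$ and $D(\Delta^{2\kappa})$ are a priori different, I would route through the gradient form \eqref{eq:gradCont}: since $A_t^{\kappa}=A_t^{\kappa^+}-A_t^{\kappa^-}$ with $A_t^{\kappa^+}\geq0$, the Feynman--Kac formula yields $P_t^{\kappa}h\leq P_t^{-\kappa^-}h$ for $h\geq0$, hence $\Gamma(P_tf)^{1/2}\leq P_t^{\kappa}\Gamma(f)^{1/2}\leq P_t^{-\kappa^-}\Gamma(f)^{1/2}$, and the equivalence (Lemma~\ref{lem:BakryEmeryEquivalence}) translates this back into ${\sf BE}_2(-\kappa^-,+\infty)$.

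For the main bound, fix $t>0$ and $f\in L^2(X;\m)\cap L^{\infty}(X;\m)$ and consider
\begin{align*}
\Phi(s):=P_s^{-2\kappa^-}\bigl((P_{t-s}f)^2\bigr),\qquad s\in[0,t].
\end{align*}
The distributional Leibniz rule $\Delta(u^2)=2u\Delta u+2\Gamma(u)$ combined with $\Delta^{-2\kappa^-}=\Delta+2\kappa^-$ gives
\begin{align*}
\Phi'(s)=2P_s^{-2\kappa^-}\bigl(\Gamma(P_{t-s}f)\bigr)+2P_s^{-2\kappa^-}\bigl(\kappa^-(P_{t-s}f)^2\bigr)\geq 2P_s^{-2\kappa^-}\bigl(\Gamma(P_{t-s}f)\bigr),
\end{align*}
since $\kappa^-\geq0$. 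Integrating over $[0,t]$,
\begin{align*}
P_t^{-2\kappa^-}(f^2)-(P_tf)^2\geq 2\int_0^tP_s^{-2\kappa^-}\bigl(\Gamma(P_{t-s}f)\bigr)\d s.
\end{align*}
To replace the integrand by the constant $\Gamma(P_tf)$, I would show that $s\mapsto P_s^{-2\kappa^-}(\Gamma(P_{t-s}f))$ is non-decreasing: its $s$-derivative equals $P_s^{-2\kappa^-}\bigl(\Delta^{-2\kappa^-}\Gamma(P_{t-s}f)-2\Gamma(P_{t-s}f,\Delta P_{t-s}f)\bigr)$, which is $\geq0$ (weakly, tested against any nonnegative $\phi$) by ${\sf BE}_2(-\kappa^-,+\infty)$ applied to $u=P_{t-s}f$; $L^2$-analyticity of $(P_t)_{t\geq0}$ yields $P_{t-s}f\in D(\Delta^k)$ for every $k\in\N$, so $\Delta P_{t-s}f\in D(\mathscr{E})$, making $u$ admissible. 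Evaluating at $s=0$ gives $P_s^{-2\kappa^-}(\Gamma(P_{t-s}f))\geq\Gamma(P_tf)$ throughout $[0,t]$, so
\begin{align*}
2t\Gamma(P_tf)\leq P_t^{-2\kappa^-}(f^2)\leq\|P_t^{-2\kappa^-}\|_{\infty,\infty}\|f\|_{L^{\infty}(X;\m)}^2,
\end{align*}
which is \eqref{eq:BoundedEst}.

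Finally, $P_tf\in\mathrm{Test}(X)$ follows by collecting the regularity just obtained: $P_tf\in L^{\infty}(X;\m)$ by the sub-Markov property, $P_tf\in D(\Delta)$ with $\Delta P_tf\in D(\Delta)\subset D(\mathscr{E})$ by $L^2$-analyticity, and $\Gamma(P_tf)\in L^{\infty}(X;\m)$ by \eqref{eq:BoundedEst}. The main technical obstacle will be rigorously justifying the distributional manipulations underlying the computation of $\Phi'(s)$ and of the monotonicity derivative, since neither $(P_{t-s}f)^2$ nor $\Gamma(P_{t-s}f)$ need lie in $D(\Delta^{-2\kappa^-})$ a priori; I would handle this by testing each identity against nonnegative $\phi\in D(\Delta^{-2\kappa^-})\cap L^{\infty}(X;\m)$ with $\Delta^{-2\kappa^-}\phi\in L^{\infty}(X;\m)$, relying on the continuity of the relevant bilinear pairings granted by $\kappa^+\in S_D({\bf X})$, $2\kappa^-\in S_{E\!K}({\bf X})$ and the norm-equivalence of $\mathscr{E}_{\alpha_0}^{-2\kappa^-}$ with $\mathscr{E}_1$.
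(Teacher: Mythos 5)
The paper itself gives no proof of this lemma---it is quoted verbatim with a citation to \cite[Proposition~6.8]{ERST}---so what you have produced is a self-contained reconstruction rather than a comparison against an argument in the text. Your route is the standard Bakry--\'Emery one: obtain ${\sf BE}_2(-\kappa^-,\infty)$ by comparing Feynman--Kac weights ($e^{-A_t^\kappa}\leq e^{A_t^{\kappa^-}}$ gives $\Gamma(P_tf)^{1/2}\leq P_t^{\kappa}\Gamma(f)^{1/2}\leq P_t^{-\kappa^-}\Gamma(f)^{1/2}$, hence ${\sf BE}_1(-\kappa^-,\infty)$, hence ${\sf BE}_2$ via Lemma~\ref{lem:BakryEmeryEquivalence} with $(-\kappa^-)^+=0\in S_D$ and $2\kappa^-\in S_{E\!K}$), then run the interpolation $\Phi(s)=P_s^{-2\kappa^-}((P_{t-s}f)^2)$ together with the monotonicity of $s\mapsto P_s^{-2\kappa^-}(\Gamma(P_{t-s}f))$ guaranteed by the Bochner inequality. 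Both the algebra ($\Phi'(s)\geq 2P_s^{-2\kappa^-}(\Gamma(P_{t-s}f))\geq 2\Gamma(P_tf)$, so $2t\Gamma(P_tf)\leq P_t^{-2\kappa^-}(f^2)$) and the final membership $P_tf\in{\rm Test}(X)$ (analyticity of $(P_t)$ plus the new $L^\infty$ gradient bound) are correct, and this is, to the best of my knowledge, essentially the argument behind \cite[Proposition~6.8]{ERST}. Your decision to avoid a direct domain comparison between $D(\Delta^{2\kappa})$ and $D(\Delta^{-2\kappa^-})$ by routing through the gradient estimate is the right call, and you are candid about the remaining technical work: because ${\sf BE}_2$ is a tested/weak inequality, the differentiations of $\Phi$ and of $\Psi(s)=P_s^{-2\kappa^-}(\Gamma(P_{t-s}f))$ must be carried out against admissible $\phi\in D(\Delta^{-2\kappa^-})\cap L^\infty$ with $\Delta^{-2\kappa^-}\phi\in L^\infty$, using the form comparability $C^{-1}\mathscr{E}_1\leq\mathscr{E}_{\alpha_0}^{-2\kappa^-}\leq C\mathscr{E}_1$ to justify the relevant pairings; this is exactly the kind of bookkeeping \cite{ERST} carries out. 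No gap in substance.
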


\begin{lem}[{{\bf \cite[Theorems~3.4 and 3.6, Proposition~3.7 and Theorem~6.10]{ERST}}}]\label{lem:BakryEmeryEquivalence}\quad\\
Under $\kappa^+\in S_D({\bf X})$ and $2\kappa^-\in S_{E\!K}({\bf X})$,  
the condition ${\sf BE}_2(\kappa,\infty)$ is equivalent 
to ${\sf BE}_1(\kappa,\infty)$. In particular, we have \eqref{eq:gradCont}. 
\end{lem}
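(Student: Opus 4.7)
The plan is to follow the ERST approach, which organizes the proof into three logical pieces: the implication ${\sf BE}_1(\kappa,\infty)\Rightarrow{\sf BE}_2(\kappa,\infty)$, its converse via Bakry's self-improvement, and the promotion of ${\sf BE}_1$ to the semigroup gradient bound \eqref{eq:gradCont}. Since the lemma is attributed to \cite{ERST}, my task is to indicate where each ingredient lives and which hypotheses it actually uses.

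For ${\sf BE}_1\Rightarrow{\sf BE}_2$, I would square the defining inequality and use the chain rule for $\Gamma$, testing against $\phi\,\Gamma(f)^{1/2}$ and invoking Cauchy--Schwarz to recover the quadratic form of ${\sf BE}_2$. For the converse, the classical Bakry argument is to replace $f$ by $f+\eps g$ in ${\sf BE}_2$, expand to first order in $\eps$, and extract a pointwise Cauchy--Schwarz inequality that forces ${\sf BE}_1$. In our setting $\kappa$ is only a signed measure with $\kappa^+\in S_D({\bf X})$ and $2\kappa^-\in S_{E\!K}({\bf X})$, so the terms $\langle\kappa,\tilde f^2\rangle$ must be interpreted via the CAF $A_t^{\kappa}$ through the Revuz correspondence. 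The Kato and Dynkin conditions are precisely what justify the required integrations by parts and limit exchanges; this is essentially the content of \cite[Thm.~3.6, Prop.~3.7]{ERST}.

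For the semigroup bound \eqref{eq:gradCont}, the natural route is interpolation: for $f\in D(\mathscr{E})\cap L^\infty(X;\m)$ with $\Gamma(f)\in L^\infty(X;\m)$, set
\[
\Phi(s):=P_s^{\kappa}\bigl(\Gamma(P_{t-s}f)^{1/2}\bigr),\qquad s\in[0,t].
\]
A formal computation gives
\[
\frac{d}{ds}\Phi(s)=P_s^{\kappa}\!\left(\Delta^{\kappa}\Gamma(P_{t-s}f)^{1/2}-\Gamma(P_{t-s}f)^{-1/2}\Gamma(P_{t-s}f,\Delta P_{t-s}f)\right),
\]
and the parenthesized expression is nonnegative by ${\sf BE}_1(\kappa,\infty)$; integrating from $0$ to $t$ delivers \eqref{eq:gradCont}. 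To handle the regularity issues, one first replaces $f$ by $f_n:=P_{1/n}f$, which by Lemma~\ref{lem:boundedEst} lies in ${\rm Test}(X)$, writes the monotonicity statement for $f_n$, and then passes to the limit using the contractivity bound \eqref{eq:KatoContraction} of $P_t^{\kappa}$ together with its strong continuity.

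The main obstacle is the rigorous implementation of Bakry's self-improvement in the distributional framework: when $\kappa$ is a smooth function the whole calculation is algebraic, but here one has to reinterpret every occurrence of ``$\Delta u\cdot\kappa$'' and ``$\Gamma(u,\kappa)$'' through the associated CAF. It is precisely the strengthened hypothesis $2\kappa^-\in S_{E\!K}({\bf X})$ (rather than merely $\kappa^-\in S_{E\!K}({\bf X})$) that renders $\|P_t^{-2\kappa^-}\|_{\infty,\infty}$ finite, thereby ensuring via Lemma~\ref{lem:boundedEst} that the class ${\rm Test}(X)$ is rich enough for the preceding manipulations to close.
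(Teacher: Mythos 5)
The paper gives no proof of this lemma; it is purely a citation to \cite{ERST}, so there is no in-text argument to compare against. Your three-part outline (the two implications between ${\sf BE}_1$ and ${\sf BE}_2$, and the promotion of ${\sf BE}_1$ to the semigroup bound \eqref{eq:gradCont} via the interpolant $s\mapsto P_s^{\kappa}\Gamma(P_{t-s}f)^{1/2}$) does track the structure of the cited results, and your observation about why $2\kappa^-\in S_{E\!K}({\bf X})$ (not merely $\kappa^-\in S_{E\!K}({\bf X})$) is the right hypothesis --- it makes $\|P_t^{-2\kappa^-}\|_{\infty,\infty}$ finite so that Lemma~\ref{lem:boundedEst} applies and ${\rm Test}(X)$ is rich enough --- is on target.

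The genuine gap is in your account of the hard direction ${\sf BE}_2\Rightarrow{\sf BE}_1$. Substituting $f\mapsto f+\eps g$ into ${\sf BE}_2$ and expanding to first order in $\eps$ only polarizes the inequality: it recovers the bilinearity of the underlying nonnegative quadratic form and the Cauchy--Schwarz inequality it automatically satisfies, which yields nothing beyond ${\sf BE}_2$ itself. The Bakry--Savar\'e self-improvement instead substitutes a genuinely \emph{nonlinear} function $\Phi(\bm f)$ of a vector of test functions, applies the chain rule for $\Gamma_2$ to produce an expression quadratic in the second-order derivatives of $\Phi$, and then optimizes pointwise over those coefficients to manufacture the additional term $\Gamma(\Gamma(f))/(4\Gamma(f))$; this is the content of \cite[Lemma~3.2]{Sav14} and \cite[Theorem~6.10]{ERST}, and a first-order perturbation simply cannot see it. A secondary issue: ${\sf BE}_1$ is an integral inequality, so your assertion that the integrand $\Delta^{\kappa}\Gamma(P_{t-s}f)^{1/2}-\Gamma(P_{t-s}f)^{-1/2}\Gamma(P_{t-s}f,\Delta P_{t-s}f)$ is pointwise nonnegative does not follow directly, and that expression already presupposes $\Gamma(P_{t-s}f)^{1/2}\in D(\Delta^{\kappa})$, which is not available even for $f\in{\rm Test}(X)$. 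The standard remedy is to differentiate the pairing $s\mapsto\int_X(P_s^{\kappa}\phi)\,\Gamma(P_{t-s}f)^{1/2}\,\d\m$ and invoke ${\sf BE}_1$ in its integral form with $P_s^{\kappa}\phi$ as the test function.
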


\begin{lem}[{{\bf \cite[Lemma~3.2]{Sav14}}}]\label{lem:algebra}
Under Assumption~\ref{asmp:Tamed}, for every $f\in {\rm Test}(X)$, we have 
$\Gamma(f)\in D(\mathscr{E})\cap L^{\infty}(X;\m)$ and there exists $\mu=\mu^+-\mu^-$ with 
$\mu^{\pm}\in D(\mathscr{E})^*$ such that 
\begin{align}
-\mathscr{E}^{2\kappa}(u,\varphi)=\int_X\tilde{\varphi}\,\d \mu\quad\text{ for all }\quad \varphi\in D(\mathscr{E}).
\end{align}
Moreover, ${\rm Test}(X)$ is an algebra, i.e., for $f,g\in {\rm Test}(X)$, $fg\in {\rm Test}(X)$, 
if further {\boldmath$f$}$\in {\rm Test}(X)^n$, then $\Phi(${\boldmath$f$}$)\in {\rm Test}(X)$ for every smooth 
function $\Phi:\R^n\to\R$ with $\Phi(0)=0$.
\end{lem}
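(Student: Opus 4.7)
My plan is to follow the Dirichlet-form adaptation of Savar\'e's self-improvement argument, using the Bakry-\'Emery condition ${\sf BE}_2(\kappa,\infty)$ from Assumption~\ref{asmp:Tamed} in place of the constant-curvature bound. For $f\in {\rm Test}(X)$ the hypotheses of ${\sf BE}_2(\kappa,\infty)$ are met since $f\in D(\Delta)$ with $\Delta f\in D(\mathscr{E})$, hence for every nonnegative $\phi\in D(\Delta^{2\kappa})\cap L^{\infty}(X;\m)$ with $\Delta^{2\kappa}\phi\in L^{\infty}(X;\m)$, the distributional Bochner inequality
\begin{align*}
\tfrac12\int_X\Delta^{2\kappa}\phi\cdot\Gamma(f)\,\d\m-\int_X\phi\cdot\Gamma(f,\Delta f)\,\d\m\geq0
\end{align*}
holds; the second integrand lies in $L^1(X;\m)$ via Cauchy--Schwarz $\Gamma(f,\Delta f)^2\leq\Gamma(f)\Gamma(\Delta f)$, combined with $\Gamma(f)\in L^{\infty}(X;\m)$ and $\Delta f\in D(\mathscr{E})$.

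The left-hand side defines a positive linear functional $L$ on the cone of admissible $\phi$. Taking test inputs $\phi=P_t^{2\kappa}\psi$ with $\psi\in L^2(X;\m)\cap L^{\infty}(X;\m)$ nonnegative (these lie in ${\rm Test}(X)$ thanks to Lemma~\ref{lem:boundedEst}) and applying a Daniell--Stone representation on the Lusin space $X$, one produces a nonnegative Borel measure $\nu$ with $L(\phi)=\int_X\tilde\phi\,\d\nu$. Rewriting then gives
\begin{align*}
-\mathscr{E}^{2\kappa}(\Gamma(f),\varphi)=\int_X\tilde\varphi\,\d\mu,\qquad \mu:=2\nu+2\Gamma(f,\Delta f)\,\m,
\end{align*}
for $\varphi$ in a dense subclass of $D(\mathscr{E})$. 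The positive part $\mu^+$ is controlled in the $D(\mathscr{E})^*$-norm via the comparison between $\mathscr{E}^{2\kappa}_{\alpha_0}$ and $\mathscr{E}_1$ recalled in Subsection~\ref{subsec:Frame}, and $\mu^-\ll\m$ with $L^1$ density. Combining this integration-by-parts identity with $\Gamma(f)\in L^{\infty}(X;\m)$ then upgrades $\Gamma(f)$ to the form domain $D(\mathscr{E})$.

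For the algebra claim, given $f,g\in {\rm Test}(X)$ the Leibniz rules $\Gamma(fg)=f^2\Gamma(g)+2fg\Gamma(f,g)+g^2\Gamma(f)$ and $\Delta(fg)=f\Delta g+g\Delta f+2\Gamma(f,g)$, together with the standard fact that $D(\mathscr{E})\cap L^{\infty}(X;\m)$ is an algebra, place $fg$ in $D(\Delta)$ with $\Delta(fg)\in D(\mathscr{E})$ and $\Gamma(fg)\in L^{\infty}(X;\m)$. Here $\Gamma(f,g)\in D(\mathscr{E})\cap L^{\infty}(X;\m)$ comes from the polarization $\Gamma(f,g)=\frac14(\Gamma(f+g)-\Gamma(f-g))$ and the previous step applied to $f\pm g\in {\rm Test}(X)$. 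The chain rule $\Phi(f_1,\ldots,f_n)\in{\rm Test}(X)$ for smooth $\Phi$ with $\Phi(0)=0$ then follows by polynomial approximation of $\Phi$ on the compact range of $(f_1,\ldots,f_n)$, together with uniform control on $\Gamma(f_i)$ and $\Delta f_i$.

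The delicate step is the representation step: producing a Radon measure from the positive functional $L$ and verifying $\mu^+\in D(\mathscr{E})^*$. Both rely on the density of the admissible cone in $D(\mathscr{E}^{2\kappa})$ and, crucially, on the quantitative $L^{\infty}$-bound \eqref{eq:KatoContraction} for $P_t^{-2\kappa^-}$; this is precisely what the hypothesis $2\kappa^-\in S_{E\!K}({\bf X})$ secures, and the whole machinery breaks down under weaker Kato-type assumptions.
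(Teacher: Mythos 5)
The paper does not prove this lemma but cites Savar\'e \cite[Lemma~3.2]{Sav14} (as adapted to the tamed setting by \cite{ERST}), so there is no in-paper proof to compare line by line; I will instead assess your reconstruction of Savar\'e's self-improvement argument on its own terms. The skeleton you follow is the right one, and your algebra/chain-rule discussion is essentially sound, but two steps in the analytic part have real gaps.

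First, the test-function step is not justified. You take $\phi=P_t^{2\kappa}\psi$ with $\psi\in L^2(X;\m)\cap L^{\infty}(X;\m)$ nonnegative and invoke Lemma~\ref{lem:boundedEst} to conclude $\phi\in{\rm Test}(X)$. That lemma concerns the unperturbed semigroup $P_t$, not the Schr\"odinger semigroup $P_t^{2\kappa}$, so it does not apply as cited. More importantly, membership in ${\rm Test}(X)$ (which is built on $\Delta$) is neither the hypothesis required by ${\sf BE}_2(\kappa,\infty)$ nor does it imply it; what is actually needed is $\phi\in D(\Delta^{2\kappa})$ with $\Delta^{2\kappa}\phi\in L^{\infty}(X;\m)$ and $\phi\in L^{\infty}(X;\m)$. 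For $\psi$ merely in $L^2\cap L^\infty$, the condition $\Delta^{2\kappa}P_t^{2\kappa}\psi\in L^{\infty}(X;\m)$ is precisely the nontrivial regularization estimate one must verify (analyticity only gives an $L^2$ bound of order $t^{-1}$), and you do not address it. Second, the conclusion $\Gamma(f)\in D(\mathscr{E})$ --- the actual self-improvement and the point of the lemma --- is asserted in a single sentence (``then upgrades $\Gamma(f)$ to the form domain'') with no argument. Knowing that $\Gamma(f)\in L^{\infty}(X;\m)$ and that $-\mathscr{E}^{2\kappa}(\Gamma(f),\cdot)$ is represented by a signed measure with both parts in $D(\mathscr{E})^*$ does not, by itself, put $\Gamma(f)$ in $D(\mathscr{E})$; moreover your control of $\mu^+$ in $D(\mathscr{E})^*$ via the comparability $\mathscr{E}^{2\kappa}_{\alpha_0}\asymp\mathscr{E}_1$ tacitly presupposes $\Gamma(f)\in D(\mathscr{E})$, which is circular. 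Savar\'e's argument instead derives a quantitative a priori bound on $\mu^+(X)$ from the $L^{\infty}$ bound on $\Gamma(f)$ and then passes through a resolvent/mollification step; that chain needs to be spelled out for your proof to close.
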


\begin{lem}[{{\bf cf.~\cite[Corollary~6.9]{ERST}}}]\label{lem:DensenessTestFunc}
${\rm Test}(X)\cap L^p(X;\m)$ is dense in $L^p(X;\m)\cap L^2(X;\m)$ both in $L^p$-norm 
and in $L^2$-norm. Moreover, ${\rm Test}(X)$ is dense in $(\mathscr{E},D(\mathscr{E}))$. 
\end{lem}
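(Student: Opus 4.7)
The plan is to smooth arbitrary functions into test functions via the heat semigroup $(P_t)_{t\geq 0}$, preceded by a truncation step to bring them into $L^\infty$ so that Lemma~\ref{lem:boundedEst} applies.

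For the first assertion, I would start with $f\in L^p(X;\m)\cap L^2(X;\m)$ and introduce the truncations $f_n:=(-n)\vee f\wedge n$, which lie in $L^p(X;\m)\cap L^2(X;\m)\cap L^\infty(X;\m)$. Since $|f_n|\leq |f|$ and $f_n\to f$ pointwise $\m$-a.e., dominated convergence yields $f_n\to f$ in both $L^p$ and $L^2$. Now $f_n\in L^2\cap L^\infty$, so Lemma~\ref{lem:boundedEst} gives $P_tf_n\in{\rm Test}(X)$ for every $t>0$; sub-Markovianity ensures that $P_tf_n\in L^p(X;\m)\cap L^\infty(X;\m)$ as well, hence $P_tf_n\in{\rm Test}(X)\cap L^p(X;\m)$. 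By strong continuity of $(P_t)$ on both $L^p(X;\m)$ (for $p\in[1,+\infty[$) and $L^2(X;\m)$, $P_tf_n\to f_n$ as $t\to 0$ in both norms. A diagonal choice $t=t(n)\downarrow 0$ then produces a sequence in ${\rm Test}(X)\cap L^p(X;\m)$ converging to $f$ simultaneously in $L^p$ and $L^2$.

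For the second assertion, given $f\in D(\mathscr{E})$, the unit contraction property of the Dirichlet form yields $f_n:=(-n)\vee f\wedge n\in D(\mathscr{E})\cap L^\infty(X;\m)$ with $f_n\to f$ in $L^2(X;\m)$ and $\mathscr{E}(f_n)\leq \mathscr{E}(f)$. Weak $\mathscr{E}_1$-compactness of $\{f_n\}$ together with the lower semicontinuity of $\mathscr{E}$ forces $\mathscr{E}(f_n)\to \mathscr{E}(f)$, and combined with $L^2$-convergence this upgrades to $\mathscr{E}_1$-convergence $f_n\to f$. For each fixed $n$, Lemma~\ref{lem:boundedEst} gives $P_tf_n\in{\rm Test}(X)$, while the spectral theorem applied to the generator $\Delta$ of $(\mathscr{E},D(\mathscr{E}))$ yields $P_tf_n\to f_n$ in $\mathscr{E}_1$-norm as $t\to 0$ (since $f_n\in D(\mathscr{E})$). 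A further diagonal argument then delivers a sequence in ${\rm Test}(X)$ approximating $f$ in the $\mathscr{E}_1$-norm.

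Because Lemma~\ref{lem:boundedEst} already hands us membership in ${\rm Test}(X)$ for free after a single heat-semigroup smoothing, the argument reduces to standard density steps from Dirichlet form theory. The only point requiring genuine verification is the $\mathscr{E}_1$-convergence of the truncations $f_n\to f$, which is classical but is where the role of the Dirichlet contraction property enters; once this is settled, the rest is a routine diagonal extraction, and no new ingredient beyond Lemma~\ref{lem:boundedEst} and strong continuity of $(P_t)$ is required.
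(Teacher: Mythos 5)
Your proof is correct and follows essentially the same route as the paper's: truncate to obtain boundedness, apply the heat semigroup so that Lemma~\ref{lem:boundedEst} yields membership in ${\rm Test}(X)$, and conclude by a diagonal argument. You additionally spell out the $\mathscr{E}_1$-convergence of the truncations via weak compactness, lower semicontinuity of $\mathscr{E}$, and the Hilbert-space upgrade from weak to strong convergence, a step the paper leaves implicit.
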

\begin{proof}[\bf Proof]  
Take $f\in L^p(X;\m)\cap L^2(X;\m)$. We may assume $f\in L^{\infty}(X;\m)$, because 
$f$ is $L^p$(and also $L^2$)-approximated by a sequence $\{f^k\}$ of 
$L^p(X;\m)\cap L^2(X;\m)\cap L^{\infty}(X;\m)$-functions defined 
by $f^k:=(-k)\lor f\land k$. If $f\in L^p(X;\m)\cap L^2(X;\m)\cap L^{\infty}(X;\m)$, 
$P_tf\in {\rm Test}(X)$ by Lemma~\ref{lem:boundedEst} and 
$\{P_tf\}\subset {\rm Test}(X)\cap L^p(X;\m)$ converges to 
$f$ in $L^p$ and in $L^2$ as $t\to0$. If $f\in D(\mathscr{E})$, then $f$ can be approximated by 
$\{P_tf^k\}$ in $(\mathscr{E},D(\mathscr{E}))$. This shows the last statement.
\end{proof} 
\begin{remark}\label{rem:TestFunction}
{\rm 
As proved above, ${\rm Test}(X)$ forms an algebra and dense in $(\mathscr{E},D(\mathscr{E}))$ under 
Assumption~\ref{asmp:Tamed}. However, ${\rm Test}(X)$ is not necessarily a subspace of $C_b(X)$. 
So, we can not apply the same method as in Shigekawa-Yoshida~\cite{ShigekawaYoshida},  Yoshida~\cite{YoshidaNobuo}. 
When the tamed space comes from ${\sf RCD}$-space, the Sobolev-to-Lipschitz property of ${\sf RCD}$-spaces ensures 
${\rm Test}(X)\subset C_b(X)$ (see also ~\cite{HLi}). 
}
\end{remark}

\section{Proof of Theorem~\ref{thm:main1}}
In this section, we prove  Theorem~\ref{thm:main1} by a probabilistic method. 
The original idea is due to Meyer~\cite{Meyer}. The reader is referred to Bakry~\cite{Bakry1}, Shigekawa-Yoshida~\cite{ShigekawaYoshida}, Yoshida~\cite{YoshidaNobuo}. In these papers, they expanded 
$\Delta(Q_t^{(\alpha)}f)^p$, $f\in \mathcal{A}$, by employing the usual functional 
analytic argument in the proof of Littlewood-Paley-Stein inequality. In that calculations, 
they needed to assume the existence of a good core $\mathcal{A}$ like ${\rm Test}(X)$ in this paper. 
Though we have a good core ${\rm Test}(X)$ in the framework of tamed Dirichlet space under 
Bakry-\'Emery condition ${\sf BE}_2(\kappa,\infty)$, we will not follow their method. 
We mimic the method of the proof of Kawabi-Miyokawa~\cite{KawabiMiyokawa} in proving Theorem~\ref{thm:main1}.  
However, our curvature lower bound is not a constant in general, this gives another 
technical difficulty in proving Theorem~\ref{thm:main1}. For this, we should modify the 
method of the proof in  Kawabi-Miyokawa~\cite{KawabiMiyokawa}.  
We prove Theorem~\ref{thm:main1} for $f\in D(\Delta)\cap L^p(X;\m)$ at the beginning. Any $f\in  L^p(X;\m)\cap L^2(X;\m)$ 
with 
$p\in]1,+\infty[$ can be approximated by a sequence in $D(\Delta)\cap L^p(X;\m)$ in $L^p$-norm and in $L^2$-norm.
Then one can conclude the statement of 
Theorem~\ref{thm:main1} for general $f\in  L^p(X;\m)\cap L^2(X;\m)$ (hence $G_f^{\rightarrow}$, $G_f^{\uparrow}$ and $G_f$ can be extended for general $f\in L^p(X;\m)$) in view of the triangle inequality
$|g_{f_1}^{\rightarrow}-g_{f_2}^{\rightarrow}|\leq g_{f_1-f_2}^{\rightarrow}$, 
(resp.~$|g_{f_1}^{\uparrow}-g_{f_2}^{\uparrow}|\leq g_{f_1-f_2}^{\uparrow}$) hence $|G_{f_1}^{\rightarrow}-G_{f_2}^{\rightarrow}| \leq G_{f_1-f_2}^{\rightarrow}$ (resp.~$|G_{f_1}^{\uparrow}-G_{f_2}^{\uparrow}| \leq G_{f_1-f_2}^{\uparrow}$).

\subsection{Preliminaries}\label{subsec:preliminary}

In this subsections, we make some preparations. We have already used the notation $(\P_x)_{x\in X}$ 
to denote the diffusion measure of ${\bf X}$ associated with the Dirichlet form $(\mathscr{E},D(\mathscr{E}))$. 
Let $(B_t,\P_{\stackrel{\rightarrow}{a}})$ be one-dimensional Brownian motion 
starting at $a\in \R$ with the generator $\frac{\partial^2}{\partial a^2}$. 
Denote by $(\mathbb{D},D(\mathbb{D}))$ the Dirichlet form on $L^2(\R)$ associated to 
$(B_t,\P_{\stackrel{\rightarrow}{a}})$.
We set 
$\widehat{X}:=X\times\R$, $\hat{x}:=(x,a)\in \widehat{X}$, 
$\widehat{X}_t:=(X_t,B_t)$, $t\geq0$, $\widehat{\m}:=\m\otimes m$ 
 and $\P_{\hat{x}}:=\P_x\otimes \P_{\stackrel{\rightarrow}{a}}$. 
Then $\widehat{\bf X}:=(\widehat{X}_t,\P_{\hat{x}})$ is an $\widehat{\m}$-symmetric diffusion process on 
$\widehat{X}$ with the (formal) generator $\Delta+\frac{\partial^2}{\partial a^2}$, where $m$ is one-dimensional Lebesgue measure. Note that the life time $\widehat{\zeta}$ of $\widehat{\bf X}$ coincides with the life time $\zeta$ of ${\bf X}$. Since ${\bf X}$ (hence $\widehat{\bf X}$) is not necessarily conservative, throughout this paper, all functions defined on 
$\widehat{X}$ should be regarded as functions on its one point compactification $\widehat{X}_{\widehat{\partial}}:=\widehat{X}\cup\{\widehat{\partial}\}$ vanishing at $\widehat{\partial}$.
We put $\P_{\m}:=\int_X\P_x\m(\d x)$, 
$\P_{\m\otimes\delta_a}:=\int_X\P_{(x,a)}\m(\d x)$ and denote the integration with respect to 
$\P_x$, $\P_{\stackrel{\rightarrow}{a}}$, $\P_{(x,a)}$ and $\P_{\m\otimes\delta_a}$ by 
$\E_x$, $\E_{\stackrel{\rightarrow}{a}}$, $\E_{(x,a)}$ and $\E_{\m\otimes\delta_a}$, respectively. 

We denote the semigroup on $L^p(\widehat{X};\widehat{\m})$ associated with the diffusion process 
$(\widehat{X}_t)_{t\geq0}$ by $(\widehat{P}_t)_{t\geq0}$ and its generator by $\widehat{\Delta}_p$. We also denote the Dirichlet form on $L^2(\widehat{X};\widehat{\m})$ associated with $\widehat{\Delta}_2$ by 
$(\widehat{\mathscr{E}}, D(\widehat{\mathscr{E}}))$. That is, 
\begin{align*}
D(\widehat{\mathscr{E}}):&=\left\{u\in L^2(\widehat{X};\widehat{\m})\;\left|\; \lim_{t\to0}\frac{1}{t}(u-\widehat{P}_tu,u)_{L^2(\widehat{X};\widehat{\m})}<+\infty\right.\right\},\\
\widehat{\mathscr{E}}(u,v):&=\lim_{t\to0}\frac{1}{t}(u-\widehat{P}_tu,v)_{L^2(\widehat{X};\widehat{\m})}\qquad\text{ for }\quad u,v\in D(\widehat{\mathscr{E}}).
\end{align*} 
Since $(\widehat{\mathscr{E}}, D(\widehat{\mathscr{E}}))$ is associated to the 
$\widehat{\m}$-symmetric Borel right process $\widehat{\bf X}$, it is quasi-regular by 
Fitzsimmons~\cite{Fitzsimmons}. 

Throughout this subsection, we assume $\kappa^+\in S_D({\bf X})$ and $\kappa^-\in S_{E\!K}({\bf X})$.  
We define $\widehat{\kappa}^{\,\pm}:=\kappa^{\pm}\otimes m$ and $\widehat{\kappa}:=
\widehat{\,\kappa}^+-\widehat{\,\kappa}^-$. 
The associated positive continuous additive functional (PCAF in short) $\widehat{A}_t^{\;\widehat{\,\kappa}^{\pm}}$ in Revuz correspondence under $\wh{\bf X}$ is given by $\widehat{A}_t^{\;\widehat{\kappa}^{\pm}}=A_t^{\kappa^{\pm}}$, in particular, $\widehat{\kappa}^{\,+}$ (resp.~$\widehat{\kappa}^{\,-}$) 
is a smooth measure of Dynkin (resp.~extended Kato) class  with respect to $\widehat{\bf X}$, i.e. 
$\widehat{\kappa}^{\,+}\in S_D(\widehat{\bf X})$ (resp.~$\widehat{\kappa}^{\,-}\in S_{E\!K}(\widehat{\bf X})$). 
Then we can define the following quadratic form 
$(\widehat{\mathscr{E}}^{\;\widehat{\kappa}}, D(\widehat{\mathscr{E}}^{\;\widehat{\kappa}}))$ 
on $L^2(\widehat{X};\widehat{\m})$: 
\begin{align}
D(\widehat{\mathscr{E}}^{\;\widehat{\kappa}}):=D(\widehat{\mathscr{E}}),\qquad
\widehat{\mathscr{E}}^{\;\widehat{\kappa}}(u,v):=\widehat{\mathscr{E}}(u,v)+\langle\widehat{\kappa},\tilde{u}\tilde{v}\rangle\quad\text{ for }\quad u,v\in D(\widehat{\mathscr{E}}^{\;\widehat{\kappa}}).\label{eq:quadratic}
\end{align}
Here $\tilde{u}$ denotes the $\widehat{\mathscr{E}}$-quasi-continuous $\widehat{\m}$-version of $u$ with respect to 
$(\widehat{\mathscr{E}}, D(\widehat{\mathscr{E}}))$. 
The strongly continuous semigroup $(\widehat{P}_t^{\;\widehat{\kappa}})_{t\geq0}$ on $L^2(\widehat{X};\widehat{\m})$ associated with the quadratic form 
$(\widehat{\mathscr{E}}^{\;\widehat{\kappa}}, D(\widehat{\mathscr{E}}^{\;\widehat{\kappa}}))$ 
is given by 
\begin{align}
\widehat{P}_t^{\widehat{\;\kappa}}u(\hat{x})=
\E_{\hat{x}}
[e^{-A_t^{\kappa}}u(\widehat{X}_t)]
\quad \text{ for }\quad u\in L^2(\widehat{X};\widehat{\m})\cap \mathscr{B}(\widehat{X}).\label{eq:FeynmanKac}
\end{align}

We denote by $\widehat{\mathscr{C}}:={\rm Test}
(X)\otimes C_c^{\infty}(\R)$ the totality of all linear combinations of $f\otimes\varphi$, $f\in {\rm Test}(X)$, $\varphi\in C_c^{\infty}(\R)$, where 
$(f\otimes \varphi)(y):=f(x)\varphi(a)$. Meanwhile, the space $L^2(X;\m)\otimes L^2(\R)$ and 
$D(\mathscr{E})\otimes H^{1,2}(\R)$ are usual tensor products of Hilbert spaces, where $H^{1,2}(\R)$ is the Sobolev space which consists of all functions $\varphi\in L^2(\R)$ such that the weak derivative $\varphi'$ exists and belongs to $L^2(\R)$. Then we have 

\begin{lem}\label{lem:Core}
$\widehat{\mathscr{C}}$ is dense in $D(\widehat{\mathscr{E}}
)$. Moreover, 
for $u,v\in D(\mathscr{E}
)\otimes H^{1,2}(\R)$,  we have
\begin{align}
\widehat{\mathscr{E}}(u,v)&=\int_{\R}\mathscr{E}(u(\cdot,a),v(\cdot,a))m(\d a)+
\int_X\m(\d x)\int_{\R}\frac{\partial u}{\partial a}(x,a)\frac{\partial v}{\partial a}(x,a)m(\d a).\label{eq:Identity}
\end{align}
\end{lem}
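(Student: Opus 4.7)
The plan exploits the product-diffusion structure of $\widehat{\bf X} = ({\bf X}, B)$. Under $\P_{(x,a)} = \P_x\otimes\P_{\stackrel{\rightarrow}{a}}$ the processes $X_t$ and $B_t$ are independent, which yields the semigroup factorization
$$\widehat{P}_t(f\otimes\varphi)(x,a) = (P_t f)(x)\,(T_t\varphi)(a),\qquad f\in L^2(X;\m),\ \varphi\in L^2(\R),$$
where $(T_t)_{t\geq 0}$ is the one-dimensional Brownian semigroup on $L^2(\R)$ with Dirichlet form $(\mathbb{D}, D(\mathbb{D}))$. By linearity, this extends to the algebraic tensor product $L^2(X;\m)\otimes L^2(\R)$, which is dense in $L^2(\widehat{X};\widehat{\m})$. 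All subsequent arguments flow from this identity.

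To establish \eqref{eq:Identity}, I first verify it on elementary tensors $u = f\otimes\varphi$, $v = g\otimes\psi$ with $f,g\in D(\mathscr{E})$, $\varphi,\psi\in H^{1,2}(\R)$. The factorization combined with the algebraic splitting
$$\frac{1}{t}(u - \widehat{P}_t u, v)_{L^2(\widehat{\m})} = (f,g)_{L^2(\m)}\cdot\frac{1}{t}(\varphi - T_t\varphi,\psi)_{L^2(\R)} + \frac{1}{t}(f - P_t f, g)_{L^2(\m)}(T_t\varphi,\psi)_{L^2(\R)}$$
passes to the limit $(f,g)_{L^2(\m)}\mathbb{D}(\varphi,\psi) + \mathscr{E}(f,g)(\varphi,\psi)_{L^2(\R)}$ as $t\to 0$ by the definitions of $\mathscr{E}$ and $\mathbb{D}$ together with $L^2$-continuity of $(T_t)$. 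Fubini and the identifications $\mathscr{E}(u(\cdot,a),v(\cdot,a)) = \varphi(a)\psi(a)\mathscr{E}(f,g)$ and $\partial_a u\,\partial_a v = f(x)g(x)\varphi'(a)\psi'(a)$ then reproduce the right-hand side of \eqref{eq:Identity}. Bilinearity extends this to all finite sums in $D(\mathscr{E})\otimes H^{1,2}(\R)$.

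For the $\widehat{\mathscr{E}}_1$-density of $\widehat{\mathscr{C}}$ in $D(\widehat{\mathscr{E}})$ I proceed in two stages. Stage one: $D(\mathscr{E})\otimes H^{1,2}(\R)$ is $\widehat{\mathscr{E}}_1$-dense in $D(\widehat{\mathscr{E}})$. Given $u\in D(\widehat{\mathscr{E}})$, the strong continuity $\widehat{P}_t u \to u$ in $\widehat{\mathscr{E}}_1$ reduces the problem to approximating $\widehat{P}_t u$ for fixed small $t>0$; choosing $u_n\in L^2(X;\m)\otimes L^2(\R)$ with $u_n\to u$ in $L^2(\widehat{\m})$ yields $\widehat{P}_t u_n = \sum P_t f_i\otimes T_t\varphi_i \in D(\Delta)\otimes H^{1,2}(\R)\subset D(\mathscr{E})\otimes H^{1,2}(\R)$, and the smoothing estimate $\|\widehat{P}_t\|_{L^2\to D(\widehat{\mathscr{E}})}\leq C(t)<\infty$ (from spectral calculus of $\widehat{\Delta}$) upgrades $L^2$-convergence to $\widehat{\mathscr{E}}_1$-convergence; a diagonal procedure in $(n,t)$ completes the stage. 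Stage two: within $D(\mathscr{E})\otimes H^{1,2}(\R)$ one approximates factor-by-factor, using Lemma~\ref{lem:DensenessTestFunc} for the $\mathscr{E}_1$-density of ${\rm Test}(X)$ in $D(\mathscr{E})$ and the classical density of $C_c^{\infty}(\R)$ in $H^{1,2}(\R)$; the formula \eqref{eq:Identity} immediately converts these one-variable approximations into $\widehat{\mathscr{E}}_1$-approximations of elementary tensors by elements of $\widehat{\mathscr{C}}$, and bilinearity extends this to finite sums.

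The main technical obstacle is the first stage: lifting the easy $L^2$-density of elementary tensors to $\widehat{\mathscr{E}}_1$-density of the algebraic tensor product $D(\mathscr{E})\otimes H^{1,2}(\R)$ in $D(\widehat{\mathscr{E}})$. The essential ingredient is the joint smoothing of $\widehat{P}_t = P_t\otimes T_t$ on $L^2(\widehat{\m})$ combined with its preservation of the algebraic tensor structure; once that is secured, the second stage and the formula itself amount to routine bilinear computations.
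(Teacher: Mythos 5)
Your proposal is correct and follows essentially the same route as the paper: both hinge on the semigroup factorization $\widehat{P}_t(f\otimes\varphi)=(P_tf)\otimes(T_t\varphi)$, verify \eqref{eq:Identity} on elementary tensors and extend by bilinearity, and then establish $\widehat{\mathscr{E}}_1$-density in two steps (algebraic tensor product is dense in $D(\widehat{\mathscr{E}})$; $\widehat{\mathscr{C}}$ is dense in the tensor product via one-variable density of ${\rm Test}(X)$ and $C_c^\infty(\R)$). The only cosmetic difference is in the first density stage: the paper invokes directly that $\bigcup_{t>0}\widehat{P}_t(L^2(X;\m)\otimes L^2(\R))$ is $\widehat{\mathscr{E}}_1$-dense in $D(\widehat{\mathscr{E}})$ (a standard Dirichlet-form fact), whereas you unpack that fact via the spectral smoothing bound $\|\widehat{P}_t\|_{L^2\to D(\widehat{\mathscr{E}})}\leq C(t)$ plus a diagonal argument — same content, made explicit.
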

\begin{proof}[\bf Proof]  
We denote by $(T_t)_{t\geq0}$ the transition semigroup associated with $(B_t,\P_a^{\rightarrow})$. 
We can regard it as the semigroup on $L^2(\R)$. First, we denote that the following identity holds:
\begin{align}
\widehat{P}_t(f\otimes\varphi)=(P_tf)\otimes(T_t\varphi), \quad f\in L^2(X;\m),\quad \varphi\in L^2(\R).\label{eq:productSemigroup}
\end{align} 
By \eqref{eq:productSemigroup}, we can see $\widehat{\mathscr{C}}\subset D(\mathscr{E})\otimes H^{1,2}(\R)\subset D(\widehat{\mathscr{E}})$ and the identity \eqref{eq:Identity}. We also have
\begin{align}
\widehat{\mathscr{E}}_1(f\otimes \varphi,f\otimes\varphi)=\mathscr{E}(f,f)\|\varphi\|_{L^2(\R)}^2
+\|f\|_{L^2(X;\m)}^2\left(\|\varphi'\|_{L^2(\R)}^2+\|\varphi\|_{L^2(\R)}^2 \right)\label{eq:CartesProduct}
\end{align} 
holds for $f\in D(\mathscr{E})$, $\varphi\in H^{1,2}(\R)$. By \eqref{eq:CartesProduct}, 
we can see that $\widehat{\mathscr{C}}$ is dense in $D(\mathscr{E})\otimes H^{1,2}(\R)$ with respect 
to $\widehat{\mathscr{E}}_1$, because ${\rm Test}(X)$ and $C_c^{\infty}(\R)$ are dense in $(\mathscr{E},D(\mathscr{E}))$ and in $H^{1,2}(\R)$, respectively. 

Hence it is sufficient to show $D(\mathscr{E})\otimes H^{1,2}(\R)$ is dense in $(\widehat{\mathscr{E}}, D(\widehat{\mathscr{E}}))$. Since $L^2(X;\m)\otimes L^2(\R)$ is dense in $L^2(\widehat{X};\widehat{\m})$, 
$\bigcup_{t>0}\widehat{P}_t\left(L^2(X;\m)\otimes L^2(\R) \right)$ is dense in 
$(\widehat{\mathscr{E}}, D(\widehat{\mathscr{E}}))$. On the other hand, 
\eqref{eq:productSemigroup} also leads us to 
\begin{align*}
\bigcup_{t>0}\widehat{P}_t\left(L^2(X;\m)\otimes L^2(\R) \right)&=\bigcup_{t>0}\left(P_t(L^2(X;\m)) \right)
\otimes \left(T_t(L^2(\R))\right)\\
&\subset D(\mathscr{E})\otimes H^{1,2}(\R)\subset D(\widehat{\mathscr{E}}).
\end{align*}
This completes the proof.
\end{proof} 
Denote by $\mathscr{P}(X)$, the family of all Borel probability measures on $X$, and  
by $\mathscr{B}^*(X)$, the family of all universally measurable sets, that is, $\mathscr{B}^*(X):=\bigcap_{\nu\in\mathscr{P}(X)}\overline{\mathscr{B}(X)}^{\nu}$, where $\overline{\mathscr{B}(X)}^{\nu}$ is the $\nu$-completion of $\mathscr{B}(X)$ for $\nu\in\mathscr{P}(X)$. $\mathscr{B}^*(X)$ also denotes the family of 
universally measurable real valued functions. Moreover, $\mathscr{B}_b^*(X)$ (resp.~$\mathscr{B}_+^*(X)$) denotes the family of bounded (resp.~non-negative) universally measurable functions.  
For $f\in \mathscr{B}_b^*(X)$, or $f\in \mathscr{B}_+^*(X)$, we set
\begin{align}
q_t^{(\alpha),\kappa}f(x):=\E_x\left[\int_0^{\infty}e^{-\alpha s-A_s^{\kappa}}f(X_s)\lambda_t(\d s) \right]
\end{align}
and write $q_t^{(\alpha)}f(x)$ instead of $q_t^{(\alpha),0}f(x)$. Then $q_t^{(\alpha),\kappa}f\in \mathscr{B}_b^*(X)$ (resp.~$q_t^{(\alpha),\kappa}f\in \mathscr{B}_+^*(X)$) under $f\in \mathscr{B}_b^*(X)$ (resp.~$f\in \mathscr{B}_+^*(X)$).  
It is easy to see $q_t^{(\alpha),\kappa}1(x)\leq Ce^{-\sqrt{\alpha-C_{\kappa}}t}$ ($\alpha\geq C_{\kappa}$) and 
$q_t^{(\alpha)}1(x)\leq e^{-\sqrt{\alpha}t}$ ($\alpha\geq0$). 
Take $f\in L^2(X;\m)\cap \mathscr{B}_b^*(X)$ or $f\in L^2(X;\m)\cap \mathscr{B}_+^*(X)$.  
According to the equation 
\begin{align}
(q_t^{(\alpha),\kappa}f,g)_{\m}&=\int_0^{\infty}e^{-\alpha s}(p_s^{\kappa}f,g)_{\m}\lambda_t(\d s)\notag\\
&=\int_0^{\infty}e^{-\alpha s}(P_s^{\kappa}f,g)_{\m}\lambda_t(\d s)\label{eq:Version}\\
&=(Q_t^{(\alpha),\kappa}f,g)_{\m}\quad\text{ for any }\quad g\in L^2(X;\m),\notag
\end{align}
$q_t^{(\alpha),\kappa}f$ is an $\m$-version of $Q_t^{(\alpha),\kappa}f$.
Now we fix a function $f\in D(\Delta^{\kappa})
\cap \mathscr{B}^*(X)$. 
We  set $u(x,a):=q_a^{(\alpha),\kappa}f(x)$ $(\alpha\geq C_{\kappa})$. Then it holds that 
\begin{align}
&a\mapsto u(\cdot,a)\quad\text{ is an $L^2(X;\m)$-valued smooth function, and }\notag\\
&\left(\frac{\partial^2}{\partial a^2}+\Delta^{\kappa}
-\alpha \right)u(\cdot,a)=0\quad\text{ in }\quad L^2(X;\m).\label{eq:recursive}
\end{align}
Furthermore for $a\in\R$, we consider $v(x,a):=u(x,|a|)=q_{|a|}^{(\alpha),\kappa
}f(x)$ for $\alpha>C_{\kappa}$. 
Then by \eqref{eq:Contra}, we have 
\begin{align}
\|v\|_{L^2(\widehat{X};\widehat{\m})}\leq \left(\int_{\R}Ce^{-2\sqrt{\alpha-C_{\kappa}
}|a|}\|f\|_{L^2(X;\m)}^2 \d a\right)^{1/2}=C^{\frac12}(
\alpha-C_{\kappa}
)
^{-1/4}\|f\|_{L^2(X;\m)}.\label{eq:L^2Est}
\end{align}
The main purpose of this subsection is the semimartingale decomposition of $\tilde{v}(\widehat{X}_{t\land\tau})$, $t\geq0$, where $\tau:=\inf\{t>0\mid B_t=0\}$. As 
the first step, we give the following: 
\begin{lem}\label{lem:Domain}
For $f\in D(\Delta^{\kappa})
\cap \mathscr{B}^*(X)$ and $\alpha>C_{\kappa}$, 
$v(x,a):=q_{|a|}^{(\alpha),\kappa}f(x)$ satisfies 
$v\in D(\widehat{\mathscr{E}})$. 
\end{lem}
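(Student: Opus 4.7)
My strategy is to realize $v$ as the $L^2(\widehat X;\widehat\m)$-limit of an $\widehat{\mathscr E}_1$-bounded sequence inside $D(\widehat{\mathscr E})$ and then invoke closedness of $\widehat{\mathscr E}$. Let $\{E^{\kappa}_\lambda\}$ denote the spectral resolution of the lower semi-bounded self-adjoint operator $-\Delta^\kappa$ on $L^2(X;\m)$ (its spectrum is contained in $[-C_\kappa,+\infty[$) and let $\mu_f(\d\lambda):=\d\|E^\kappa_\lambda f\|_{L^2(X;\m)}^2$. Then $Q_a^{(\alpha),\kappa}f=\int e^{-a\sqrt{\alpha+\lambda}}\,\d E^\kappa_\lambda f$ is smooth in $a\geq 0$ and satisfies $\partial_a Q_a^{(\alpha),\kappa}f=-\sqrt{\alpha-\Delta^\kappa}Q_a^{(\alpha),\kappa}f\in L^2(X;\m)$; the hypothesis $\alpha>C_\kappa$ keeps $\alpha+\lambda$ bounded away from $0$, which makes everything well-defined.

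For $\varepsilon>0$ I would introduce the regularization $v_\varepsilon(x,a):=q_{|a|+\varepsilon}^{(\alpha),\kappa}f(x)$, which is Lipschitz in $a$ with $\partial_a v_\varepsilon(x,a)=-\mathrm{sgn}(a)\sqrt{\alpha-\Delta^\kappa}Q_{|a|+\varepsilon}^{(\alpha),\kappa}f(x)$ for a.e.\ $a$, and which belongs to $D(\Delta^\kappa)\subset D(\mathscr{E})$ for every fixed $a$. To verify $v_\varepsilon\in D(\widehat{\mathscr E})$, I would further approximate by combining a spectral cutoff $f_N:=E^\kappa_{[-C_\kappa,N]}f$ with a smooth replacement $\psi_\delta$ of $|\cdot|$, producing
\begin{align*}
v_{\varepsilon,N,\delta}(x,a):=\int_{-C_\kappa}^{N}e^{-(\psi_\delta(a)+\varepsilon)\sqrt{\alpha+\lambda}}\,\d E^\kappa_\lambda f(x),
\end{align*}
which is approximable in $\widehat{\mathscr E}_1$-norm by finite sums of simple tensors $(E^\kappa_{[\lambda_j,\lambda_{j+1}]}f)(x)\cdot g_{\varepsilon,\delta,j}(a)\in D(\mathscr{E})\otimes H^{1,2}(\R)$, so $v_{\varepsilon,N,\delta}\in D(\widehat{\mathscr E})$; sending $\delta\downarrow 0$ and $N\uparrow\infty$ then gives $v_\varepsilon\in D(\widehat{\mathscr E})$.

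Using identity \eqref{eq:Identity} together with the norm equivalence $\mathscr E\asymp\mathscr E^\kappa_{\alpha_0}$, I would then bound
\begin{align*}
\widehat{\mathscr E}(v_\varepsilon,v_\varepsilon)\lesssim 2\int_\varepsilon^\infty(\mathscr E^\kappa+\alpha_0)(Q_a^{(\alpha),\kappa}f)\,\d a+2\int_\varepsilon^\infty\bigl\|\sqrt{\alpha-\Delta^\kappa}Q_a^{(\alpha),\kappa}f\bigr\|_{L^2(X;\m)}^2\d a,
\end{align*}
and evaluate the two spectral integrals as $\tfrac12\int\frac{\lambda+\alpha_0}{\sqrt{\alpha+\lambda}}\,\d\mu_f$ and $\tfrac12\int\sqrt{\alpha+\lambda}\,\d\mu_f$, both finite for $f\in D(\Delta^\kappa)\subset D(\mathscr{E})$ and bounded uniformly in $\varepsilon$. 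Combined with $v_\varepsilon\to v$ in $L^2(\widehat X;\widehat\m)$ as $\varepsilon\downarrow 0$ (by strong continuity of $(Q_a^{(\alpha),\kappa})_{a\geq 0}$ and dominated convergence using \eqref{eq:L^2Est}), closedness of $\widehat{\mathscr E}$ yields $v\in D(\widehat{\mathscr E})$.

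The hard part will be placing $v_{\varepsilon,N,\delta}$ rigorously inside the Hilbert tensor product $D(\mathscr E)\otimes H^{1,2}(\R)$: the spectral Bochner integral must be approximated by honest finite sums of simple tensors so that identity \eqref{eq:Identity} applies verbatim. Once this is achieved, the spectral evaluations and the $\mathscr E\asymp\mathscr E^\kappa_{\alpha_0}$ bound are routine, and the kink of $|a|$ at the origin contributes only a bounded jump in $\partial_a v$ that is harmless in the $L^2(\widehat\m)$-estimates.
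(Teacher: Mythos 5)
Your proof takes a genuinely different route from the paper's. The paper proves $v\in D(\widehat{\mathscr E})$ probabilistically: it computes the approximating form $\frac1t(v-\widehat P_t^{\,\widehat\kappa}v,v)_{L^2(\widehat X;\widehat\m)}$ by applying Tanaka's and It\^o's formulas to $u(\cdot,|B_t|)$ in the $a$-variable, then the martingale decomposition for $q_{|a|}^{(\alpha),\kappa}f(X_t)$ in the $x$-variable, and finally bounds the three resulting terms using explicit local-time estimates for Brownian motion. You instead work entirely spectrally: you observe that $a\mapsto Q_a^{(\alpha),\kappa}f$ lies (after symmetrizing by $|a|$ and regularizing at the kink) in the Hilbert tensor product $D(\mathscr E)\otimes H^{1,2}(\R)$, use identity \eqref{eq:Identity} to get an explicit expression for $\widehat{\mathscr E}_1(v_\varepsilon)$ as two spectral integrals, verify these are uniformly bounded via $f\in D(\Delta^\kappa)$ and $\alpha>C_\kappa$, and then close up using the lower semicontinuity of $\widehat{\mathscr E}$. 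Both routes are valid; yours avoids Tanaka/local-time machinery at the price of the tensor-approximation bookkeeping. One could say the paper exploits the probabilistic realization $\widehat{\bf X}=(X_t,B_t)$ more heavily, while your argument stays in the Hilbert-space category and works with the spectral calculus of $\Delta^\kappa$ and the identity from Lemma~\ref{lem:Core}.

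Two remarks. First, your claim $\mathscr E\asymp\mathscr E^\kappa_{\alpha_0}$ should read $\mathscr E_1\asymp\mathscr E^\kappa_{\alpha_0}$ (as quoted in Section~1), though the one-sided bound $\mathscr E\leq\mathscr E_1\lesssim\mathscr E^\kappa_{\alpha_0}$ you actually use is correct. Second, the step you flag as ``the hard part'' — approximating $v_{\varepsilon,N,\delta}$ in $\widehat{\mathscr E}_1$-norm by finite sums of simple tensors — is indeed the only place where additional care is required: one needs to verify that partitioning $[-C_\kappa,N]$ finely and freezing the exponent at $\lambda_j$ produces an error going to zero not only in $L^2(\widehat X;\widehat\m)$ but also in the full $\widehat{\mathscr E}_1$-norm, which involves the $\mathscr E$-part on each slice and the $\partial_a$-part; the spectral cutoff at $N$ and the regularization $\varepsilon>0$ make the integrand uniformly Lipschitz in $\lambda$, so this works, but it should be spelled out. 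Once that is done, the uniform bound, the $L^2$-convergence $v_\varepsilon\to v$, and the passage to the limit via closedness are exactly as you describe, and they do yield the lemma.
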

\begin{proof}[\bf Proof]  
At the beginning, we note that $L^2(\widehat{X};\widehat{\m})\cong L^2(\R,L^2(X;\m);m)$. 
We may assume $f$ is non-negative on $X$, because there exists $g\in L^2(X;\m)\cap \mathscr{B}(X)$ such that $f=R_{\alpha}^{\kappa}g$ for some/any $\alpha>C_{\kappa}$ and $f$ is difference of 
the elements in $D(\Delta^{\kappa})
\cap \mathscr{B}^*(X)_+$ by setting $g^{\pm}:=\max\{\pm g,0\}$. Here $R_{\alpha}^{\kappa}g(x):=\E_x\left[\int_0^{\infty}e^{-\alpha t-A_t^{\kappa}}g(X_t)\d t\right]$ is an $\m$-version of 
the $\alpha$-order resolvent 
$G_{\alpha}^{\kappa}g$ $(\alpha>C_{\kappa})$ associated to $(\mathscr{E}^{\kappa},D(\mathscr{E}^{\kappa}))$.
According to Fubini's theorem, we have 
\begin{align}
\widehat{P}_t^{\,\widehat{\kappa}}
v(\hat{x})=\E_{\hat{x}}\left[e^{-A_t^{\kappa}}
u(X_t,|B_t|) \right]=\E_x\left[e^{-A_t^{\kappa}}
\E_a^{\rightarrow}[u(\cdot,|B_t|)] \right].\label{eq:Semigroup}
\end{align}
We recall Tanaka's formula
\begin{align*}
|B_t|=|B_0|+\int_0^t{\rm sgn}(B_s)\d B_s+L_t^0,\quad t\geq0,\quad \P_a^{\rightarrow}\text{-a.s.,}
\end{align*}
where $(L_t^0)_{t\geq0}$ is the local time of one-dimensional Brownian motion $(B_t)_{t\geq0}$ at the origin. Then by using It\^o's formula, we have 
\begin{align}
u(\cdot,|B_t|)&=u(\cdot,|B_0|)+\int_0^t
\frac{\partial u}{\partial a}(\cdot, |B_s|){\rm sgn}(B_s)\d B_s
\label{eq:Ito}\\
&\hspace{1cm}+\int_0^t\frac{\partial u}{\partial a}(\cdot, |B_s|)
\d L_s^0+
\int_0^t\frac{\partial^2 u}{\partial a^2}(\cdot,|B_s|)\d s\notag\\
&=u(\cdot,|B_0|)-\int_0^t\sqrt{\alpha-\Delta^{\kappa}
} u(\cdot,|B_s|){\rm sgn}(B_s)\d B_s\notag\\
&\hspace{1cm}-\int_0^t\sqrt{\alpha-\Delta^{\kappa}
} u(\cdot,|B_s|)
\d L_s^0
+\int_0^t(\alpha-\Delta^{\kappa}
)u(\cdot,|B_s|)\d s.
\notag
\end{align}
Hence \eqref{eq:Ito} leads us to 
\begin{align}
\E_a^{\rightarrow}[u(\cdot,|B_t|)]&=u(\cdot,|a|)-\E_a^{\rightarrow}\left[\int_0^t\sqrt{\alpha-\Delta^{\kappa}
}u(\cdot,|B_s|)\d L_s^0 \right]\label{eq:OneIto}\\
&\hspace{1cm}+\E_a^{\rightarrow}\left[\int_0^t(\alpha-\Delta^{\kappa}
)u(\cdot,|B_s|)\d s \right].\notag
\end{align}
On the other hand, $u(\cdot,|a|)=q_{|a|}^{(\alpha),\kappa
}f(\cdot)\in D(\Delta^{\kappa}
)$ in view of 
\eqref{eq:recursive}. Hence
\begin{align*}
M_t^{[u(\cdot,|a|)]}:=e^{-A_t^{\kappa}}
(q_{|a|}^{(\alpha),\kappa
}f)(X_t)-(q_{|a|}^{(\alpha),\kappa
}f)(X_0)
-\int_0^te^{-A_s^{\kappa}}
\Delta^{\kappa}
(q_{|a|}^{(\alpha),\kappa
}f)(X_s)\d s, \quad t\geq0,
\end{align*} 
is an $L^2(\P_x)$-martingale. Then we have
\begin{align}
\E_x\left[e^{-A_t^{\kappa}}
u(X_t,|a|) \right]&=(q_{|a|}^{(\alpha),\kappa
})f(x)+\int_0^tp_s^{\kappa}
(\Delta^{\kappa}
q_{|a|}^{(\alpha),\kappa
}f)(x)\d s, \quad\m\text{-a.e.~}x\in X.
\label{eq:VerticlaEq}
\end{align}
By summarizing \eqref{eq:Semigroup}, \eqref{eq:OneIto} and \eqref{eq:VerticlaEq}, we can proceed as 
\begin{align}
\frac{1}{t}&(v-\widehat{P}_t^{\widehat{\;\kappa}}
v,v)_{L^2(\widehat{X};\widehat{\m})}\notag\\
&=-\frac{1}{t}\int_{\R}\d a\int_X\left\{\int_0^t P_s^{\kappa}
(\Delta^{\kappa}
Q_{|a|}^{(\alpha),\kappa
}f)(x)\d s \right\}\cdot Q_{|a|}^{(\alpha),\kappa
}f(x)\m(\d x)\notag\\
&\hspace{1cm}+
\frac{1}{t}\int_{\R}\d a\int_X\E_x\left[e^{-A_t^{\kappa}}
\E_a^{\rightarrow}\left[
\int_0^t\sqrt{\alpha-\Delta^{\kappa}
}u(\cdot,|B_s|)\d L_s^0 \right](X_t) \right]
\cdot Q_{|a|}^{(\alpha),\kappa
}f(x)\m(\d x)\notag\\
&\hspace{1cm}-
\frac{1}{t}\int_{\R}\d a\int_X\E_x\left[e^{-A_t^{\kappa}}
\E_a^{\rightarrow}\left[
\int_0^t(\alpha-\Delta^{\kappa}
)u(\cdot,|B_s|)\d L_s^0 \right](X_t) \right]
\cdot Q_{|a|}^{(\alpha),\kappa
}f(x)\m(\d x)
\notag\\
&=-\frac{1}{t}\int_{\R}\d a\int_0^t(P_s^{\kappa}
\Delta^{\kappa}
Q_{|a|}^{(\alpha),\kappa
}f,Q_{|a|}^{(\alpha),\kappa
}f)_{L^2(X;\m)}\d s\label{eq;Dirichlet}\\
&\hspace{1cm}+
\frac{1}{t}\int_{\R}\d a\int_X\E_a^{\rightarrow}\left[\int_0^t\sqrt{\alpha-\Delta^{\kappa}
}u(x,|B_s|)\d L_s^0 \right]P_t^{\kappa}
(Q_{|a|}^{(\alpha),\kappa
}f)(x)\m(\d x)\notag\\
&\hspace{1cm}-
\frac{1}{t}\int_{\R}\d a\int_X\E_a^{\rightarrow}\left[\int_0^t(\alpha-\Delta^{\kappa}
)u(x,|B_s|)\d s \right]P_t^{\kappa}(Q_{|a|}^{(\alpha),\kappa
}f)(x)\m(\d x)\notag\\
&=:-I_1(t)+I_2(t)-I_3(t),\notag
\end{align}
where we used the symmetry of $(P_t^{\kappa}
)_{t\geq0}$ on $L^2(X;\m)$. For the terms $I_1(t)$ and $I_2(t)$,  we see the following estimates by using the contractivity \eqref{eq:KatoContraction} 
of $(P_t^{\kappa}
)_{t\geq0}$ on $L^2(X;\m)$ and  \eqref{eq:Contra}:
\begin{align}
|I_1(t)|&\leq\frac{1}{t}\int_{\R}\d a\int_0^tCe^{C_{\kappa}s}
\|\Delta^{\kappa}Q_{|a|}^{(\alpha),\kappa
}f\|_{L^2(X;\m)}\cdot\|Q_{|a|}^{(\alpha),\kappa
}f \|_{L^2(X;\m)}\d s\notag\\
&\leq 
\left(\frac{C
}{t}\int_0^te^{{C_{\kappa}s}}
\d s \right)
\int_{\R}e^{-2\sqrt{\alpha-C_{\kappa}
}|a|}
\|\Delta^{\kappa}
f\|_{L^2(X;\m)}\cdot\|f\|_{L^2(X;\m)}\d a\label{eq:I_1}\\
&\leq 
\left(\frac{C
}{t}\int_0^te^{{C_{\kappa}s}}
\d s \right)
\frac{1}{\sqrt{\alpha-C_{\kappa}
}}
\|\Delta^{\kappa}
f\|_{L^2(X;\m)}\cdot\|f\|_{L^2(X;\m)},\notag
\end{align}
\begin{align*}
|I_2(t)|&=\left|\frac{1}{t}\int_{\R}\d a\int_X\left( \sqrt{\alpha-\Delta^{\kappa}
}u(x,0)\E_a^{\rightarrow}\left[L_t^0 \right]\right)\cdot P_t^{\kappa}
(Q_{|a|}^{(\alpha),\kappa
}f)(x)\m(\d x) \right|\\
&=\frac{1}{t}\left|\int_{\R}(\sqrt{\alpha-\Delta^{\kappa}
}f, P_t^{\kappa}
Q_{|a|}^{(\alpha),\kappa
}f)_{L^2(X;\m)}\E_a^{\rightarrow}[L_t^0]\d a \right|\\
&\leq \frac{2C
}{t}e^{C_{\kappa}t}
\|\sqrt{\alpha-\Delta^{\kappa}
}f\|_{L^2(X;\m)}\cdot\|f\|_{L^2(X;\m)}
\int_0^{\infty}e^{-\sqrt{\alpha-C_{\kappa}
}a}\E_a^{\rightarrow}[L_t^0]\d a.
\end{align*}
Here we recall 
\begin{align*}
\P_a^{\rightarrow}(L_t^r\in\d y)=\frac{1}{\sqrt{\pi t}}\exp\left( -\frac{(y+|r-a|)^2}{4t}\right)\d y,\qquad y>0.
\end{align*}
(see \cite[p.~155]{BorodinSalminen}). Then we can continue as 
\begin{align}
|I_2(t)|&\leq \frac{2C
}{t}e^{C_{\kappa}t}
\|\sqrt{\alpha-\Delta^{\kappa}
}f\|_{L^2(X;\m)}\cdot\|f\|_{L^2(X;\m)}\notag\\
&\hspace{1cm}\times \int_0^{\infty}e^{-\sqrt{\alpha-C_{\kappa}
}a}\left\{\int_0^{\infty}y\frac{1}{\sqrt{\pi t}}\exp\left(-\frac{(a+y)^2}{4t} \right)\d y \right\}\d a\notag\\
&\leq 8C^{\frac32}e^{C_{\kappa}t}
\|\sqrt{\alpha-\Delta^{\kappa}
}f\|_{L^2(X;\m)}
\cdot
\|f\|_{L^2(X;\m)}\label{eq:I_2}\\
&\hspace{1cm}\times \int_0^{\infty}\frac{1}{\sqrt{2\pi}}e^{-\frac{a^2}{2}}\d a\int_0^{\infty}y e^{-\frac{y^2}{2}}\d y\notag\\
&=4C^{\frac32}e^{C_{\kappa}t}
\|\sqrt{\alpha-\Delta^{\kappa}
}f\|_{L^2(X;\m)}\cdot\|f\|_{L^2(X;\m)}.\notag
\end{align}
For the term $I_3(t)$, we have
\begin{align}
|I_3(t)|&\leq \frac{Ce^{C_{\kappa}t}
}{t}
\int_{\R}\left\|
\E_a^{\rightarrow}
\left[\int_0^t(\alpha-\Delta^{\kappa}
)u(\cdot,|B_s|)\d s \right]
 \right\|_{L^2(X;\m)} \|Q_{|a|}^{(\alpha),\kappa
 } f \|_{L^2(X;\m)}\d a\notag\\
 &\leq
 \frac{Ce^{C_{\kappa}t}
 }{t}\int_{\R}\E_a^{\rightarrow}\left[\int_0^t\|(\alpha-\Delta^{\kappa}
 )Q_{|B_s|}^{(\alpha),\kappa
 }f(\cdot) \|_{L^2(X;\m)}\d s \right]
\left(e^{-\sqrt{\alpha- C_{\kappa}
}|a|}\|f\|_{L^2(X;\m)} \right)\d a\label{eq:I_3}\\
&\leq \frac{Ce^{C_{\kappa}t}
}{t}\int_{\R}\E_a^{\rightarrow}\left[\int_0^t(\alpha\|f\|_{L^2(X;\m)}+\|\Delta^{\kappa}f
\|_{L^2(X;\m)})\d s \right]
\left(e^{-\sqrt{\alpha- C_{\kappa}
}|a|}\|f\|_{L^2(X;\m)} \right)\d a\notag\\
&\leq \frac{2Ce^{C_{\kappa}t}
}{\sqrt{\alpha-C_{\kappa}
}}
(\alpha \|f\|_{L^2(X;\m)}+\|\Delta^{\kappa}
f\|_{L^2(X;\m)})\|f\|_{L^2(X;\m)}\notag
\end{align}
Finally, we substitute estimates \eqref{eq:I_1}, \eqref{eq:I_2} and \eqref{eq:I_3} into \eqref{eq;Dirichlet}. Then we can easily see 
\begin{align*}
\lim_{t\to0}\frac{1}{t}(v-\widehat{P}_t^{\,\widehat{\kappa}}
v,v)_{L^2(\widehat{X};\widehat{\m})}
=\sup_{t>0}\frac{1}{t}(v-\widehat{P}_t^{\,\widehat{\kappa}}
v,v)_{L^2(\widehat{X};\widehat{\m})}<\infty.
\end{align*}
This and \eqref{eq:L^2Est} with \eqref{eq:quadratic} complete the proof. 
\end{proof}

By Lemma~\ref{lem:Domain}, we can apply Fukushima's decomposition to 
$v(x,a):=q_{|a|}^{(\alpha),\kappa}f(x)$ for $f\in D(\Delta^{\kappa})\cap \mathscr{B}^*(X)$ and $\alpha>C_{\kappa}$. 
That is, there exists 
a martingale additive functional of finite energy $\wh{M}^{[v]}$ and a continuous additive functional of zero energy $\wh{N}^{[v]}$ such that 
\begin{align}
\tilde{v}(\widehat{X}_t)-\tilde{v}(\widehat{X}_0)=\wh{M}_t^{[v]}+\wh{N}_t^{[v]}\quad t\geq0\quad \P_{\hat{x}}\text{-a.s.~for q.e.~}\hat{x},\label{eq:FukushimaDecomp} 
\end{align}
where $\tilde{v}$ is an $\widehat{\mathscr{E}}$-quasi-continuous $\widehat{\m}$-version of 
$v\in D(\widehat{\mathscr{E}}^{\,\kappa})=
 D(\widehat{\mathscr{E}})$.
See \cite[Theorem~5.2.2]{FOT} for Fukushima's decomposition theorem. Thanks to \cite[Theorem~5.2.3]{FOT},  we know that 
\begin{align}
\langle \wh{M}^{[v]}\rangle_t=\int_0^t \left\{\Gamma(v,v)(\widehat{X}_s)+\left( \frac{\partial v}{\partial a}(\widehat{X}_s)\right)^2 \right\}\d s,\quad t\geq0.\label{eq:quadraticVariation}
\end{align}
See also \cite[Theorem~5.1.3 and Example~5.2.1]{FOT} for details.  

From now on, we give the explicit expression of $\wh{N}^{[v]}$. Let us define the signed measure $\nu$ on $\widehat{X}$ by 
\begin{align*}
\nu(\d x\d a):=2\sqrt{\alpha-\Delta^{\kappa}
} v(x,a)\m(\d x)\delta_0(\d a)
\end{align*}
for $\alpha>C_{\kappa}$, 
where $\delta_0$ is Dirac measure on $\R$ with unit mass at origin. The total variation of 
$\nu$ is given by 
\begin{align*}
|\nu|(\d x\d a):=2|\sqrt{\alpha-\Delta^{\kappa}
}v(x,a)|\m(\d x)\delta_0(\d a).
\end{align*}
Note here that $\nu$ depends on $f\in D(\Delta^{\kappa})\cap \mathscr{B}^*(X)$ and $\alpha>C_{\kappa}$. 
Then we have
\begin{lem}\label{lem:finiteEnergy}
Suppose $f\in D(\Delta^{\kappa})\cap \mathscr{B}^*(X)$ and $\alpha>C_{\kappa}$. Then, there 
exists a constant $C>0$ such that 
\begin{align*}
\int_{\widehat{X}}|g\otimes \varphi(x,a)|\cdot|\nu|(\d x\d a)\leq C\sqrt{\widehat{\mathscr{E}}_1(g\otimes\varphi,g\otimes\varphi)}
\end{align*}
for $g\in{\rm Test}
(X)$, $\varphi\in C_c^{\infty}(\R)$. That is, $\nu$ is of finite energy integral. For instance, see \cite[Sections~2.2 and 5.4]{FOT} for the definitions of measures 
of finite energy integrals. 
\end{lem}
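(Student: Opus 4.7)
The plan is to exploit the fact that $|\nu|$ is concentrated on the slice $X\times\{0\}$, so the integral on the left-hand side collapses to an $L^1(X;\m)$ estimate that can be controlled using Cauchy--Schwarz and the tensor-product expression \eqref{eq:CartesProduct} for $\widehat{\mathscr{E}}_1$. First I would observe that $\lambda_0=\delta_0$ forces $v(x,0)=q_0^{(\alpha),\kappa}f(x)=f(x)$, hence $\sqrt{\alpha-\Delta^{\kappa}}\,v(\cdot,0)=\sqrt{\alpha-\Delta^{\kappa}}f$. Because $f\in D(\Delta^{\kappa})\subset D(\mathscr{E}^{\kappa})=D(\mathscr{E})=D(\sqrt{\alpha-\Delta^{\kappa}})$, we have $\sqrt{\alpha-\Delta^{\kappa}}f\in L^2(X;\m)$ with squared norm $\mathscr{E}^{\kappa}_{\alpha}(f,f)$, a finite quantity depending only on $f$ and $\alpha$.

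Second, unpacking the definition of $|\nu|$, for any $g\in{\rm Test}(X)$ and $\varphi\in C_c^{\infty}(\R)$ one finds
\begin{align*}
\int_{\widehat X}|g\otimes\varphi|\,\d|\nu|
&=2|\varphi(0)|\int_X|g(x)|\cdot|\sqrt{\alpha-\Delta^{\kappa}}f(x)|\,\m(\d x)\\
&\leq 2|\varphi(0)|\,\|g\|_{L^2(X;\m)}\,\|\sqrt{\alpha-\Delta^{\kappa}}f\|_{L^2(X;\m)}
\end{align*}
by Cauchy--Schwarz. It thus remains to bound $|\varphi(0)|\,\|g\|_{L^2(X;\m)}$ by $\sqrt{\widehat{\mathscr{E}}_1(g\otimes\varphi,g\otimes\varphi)}$.

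Third, I would invoke the one-dimensional trace inequality $|\varphi(0)|^2\leq\|\varphi\|_{L^2(\R)}^2+\|\varphi'\|_{L^2(\R)}^2$, obtained in one line from $\varphi(0)^2=-2\int_0^{\infty}\varphi(s)\varphi'(s)\,\d s$ and Cauchy--Schwarz on $(0,\infty)$. Combined with the identity \eqref{eq:CartesProduct} from Lemma~\ref{lem:Core},
\begin{align*}
|\varphi(0)|^2\|g\|_{L^2(X;\m)}^2
\leq\|g\|_{L^2(X;\m)}^2\bigl(\|\varphi\|_{L^2(\R)}^2+\|\varphi'\|_{L^2(\R)}^2\bigr)
\leq\widehat{\mathscr{E}}_1(g\otimes\varphi,g\otimes\varphi),
\end{align*}
which yields the claim with explicit constant $C=2\|\sqrt{\alpha-\Delta^{\kappa}}f\|_{L^2(X;\m)}=2\sqrt{\mathscr{E}^{\kappa}_{\alpha}(f,f)}$, depending only on $f,\alpha,\kappa$.

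There is no substantial obstacle: the whole proof reduces to the observation that the singular support of $\nu$ is the boundary $\{a=0\}$ of the half-space Brownian motion, together with the $H^{1,2}(\R)$-trace estimate, which is essentially the fact that the $1$-dimensional Neumann Dirichlet form on $[0,\infty)$ controls point evaluation at the origin. The only mildly delicate point is to verify that $f\in D(\sqrt{\alpha-\Delta^{\kappa}})$; this follows immediately from spectral calculus since $D(\Delta^{\kappa})\subset D(\mathscr{E}^{\kappa})=D((\alpha-\Delta^{\kappa})^{1/2})$ for $\alpha>C_{\kappa}$, using the lower semi-boundedness of $\mathscr{E}^{\kappa}$ noted after \eqref{eq:KatoContraction}.
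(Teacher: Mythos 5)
Your proof is correct and takes a genuinely cleaner route than the paper's. The paper replaces $|\sqrt{\alpha-\Delta^{\kappa}}v(x,a)|$ by the $C^1$-regularization $\sqrt{(\sqrt{\alpha-\Delta^{\kappa}}v(x,a))^2+\eps}$, rewrites the $\delta_0$-integral as an integral of a derivative over $[0,a_0]$ or $[-a_0,0]$ via the fundamental theorem of calculus (with a case split on the sign of $\varphi(0)$), expands the derivative using the $a$-regularity of $v$, and then passes to $L^2$-estimates over the full slab $\wh{X}$ together with the bound \eqref{eq:L^2Est}; this yields a constant involving both $\|\sqrt{\alpha-\Delta^{\kappa}}f\|_{L^2(X;\m)}$ and $\|(\alpha-\Delta^{\kappa})f\|_{L^2(X;\m)}$. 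You instead observe directly that $|\nu|$ lives on $X\times\{0\}$, that $v(\cdot,0)=f$ in $L^2(X;\m)$ (since $\lambda_0=\delta_0$), and that point evaluation $\varphi\mapsto\varphi(0)$ is controlled by the $H^{1,2}(\R)$-norm via the one-line trace inequality $|\varphi(0)|^2\leq\|\varphi\|_{L^2(\R)}^2+\|\varphi'\|_{L^2(\R)}^2$. Combined with \eqref{eq:CartesProduct} this gives the bound at once, with the tighter constant $C=2\sqrt{\mathscr{E}^{\kappa}_{\alpha}(f,f)}$, and avoids both the $\eps$-regularization and the sign case split. Both arguments encode the same phenomenon — the half-line Neumann form controls the boundary trace — but yours is more economical and also makes the dependence of $C$ on the data transparent. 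One small remark: you should note that the needed lower bound $\alpha-\Delta^{\kappa}\geq\alpha-C_{\kappa}>0$ on $L^2$ (making $\sqrt{\alpha-\Delta^{\kappa}}$ a nonnegative self-adjoint operator with $D(\Delta^{\kappa})\subset D(\sqrt{\alpha-\Delta^{\kappa}})$) follows from letting $t\to\infty$ in the semigroup estimate \eqref{eq:KatoContraction} for $p=2$; you gesture at this but it is worth stating explicitly.
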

\begin{proof}[\bf Proof]  
We take a positive constant $a_0$ such that ${\rm supp}[\varphi]\subset [-a_0,a_0]$. 
We first consider the case of $\varphi(0)\leq0$. Let $\eps>0$. Then for $\m$-a.e.~$x\in X$, we have 
\begin{align*}
\int_{\R}&|\varphi(a)|\sqrt{(\sqrt{\alpha-\Delta^{\kappa}}v(x,a))^2+\eps}\delta_0(\d a)\\
&=-\varphi(0)\sqrt{(\sqrt{\alpha-\Delta^{\kappa}}v(x,0))^2+\eps}\\
&=\varphi(a_0)\sqrt{(\sqrt{\alpha-\Delta^{\kappa}}v(x,a_0))^2+\eps}-\varphi(0)\sqrt{(\sqrt{\alpha-\Delta^{\kappa}}v(x,0))^2+\eps}\\
&=\int_0^{a_0}\frac{\partial}{\partial a}\left\{\varphi(a)\sqrt{(\sqrt{\alpha-\Delta^{\kappa}}v(x,a))^2+\eps} \right\}\d a\\
&=\int_0^{a_0}\varphi'(a)\sqrt{(\sqrt{\alpha-\Delta^{\kappa}}v(x,a))^2+\eps}\d a\\
&\hspace{1cm}-\int_0^{a_0}\varphi(a)\frac{\sqrt{\alpha-\Delta^{\kappa}}v(x,a)\cdot(\alpha-\Delta^{\kappa})v(x,a)}{\sqrt{(\sqrt{\alpha-\Delta^{\kappa}}v(x,a_0))^2+\eps}}\d a\\
&\leq \int_{\R}|\varphi'(a)|\sqrt{(\sqrt{\alpha-\Delta^{\kappa}}v(x,a_0))^2+\eps} \d a+
\int_{\R}|\varphi(a)|\cdot|(\alpha-\Delta^{\kappa})v(x,a)|\d a.
\end{align*}
Therefore
\begin{align*}
\int_{\widehat{X}}&|(g\otimes \varphi)(x,a)|\cdot|\nu|(\d x\d a)\\
&=2\lim_{\eps\to0}\int_X|g(x)|\left(\int_{\R}|\varphi(a)|
\sqrt{(\sqrt{\alpha-\Delta^{\kappa}}v(x,a))^2+\eps}\delta_0(\d a)
 \right)\m(\d x)\\
 &\leq
 2\varlimsup_{\eps\to0}
 \int_X|g(x)|\left(\int_{\R}|\varphi'(a)|
\sqrt{(\sqrt{\alpha-\Delta^{\kappa}}v(x,a))^2+\eps}\d a
 \right)\m(\d x)\\
 &\hspace{1cm}+2\int_X|g(x)|\left(\int_{\R}|\varphi(a)|\cdot|(\alpha-\Delta^{\kappa})v(x,a)|\d a \right)\m(\d x)\\
 &\leq 2\left(
 \|\sqrt{\alpha-\Delta^{\kappa}}v\|_{L^2(\widehat{X};\widehat{\m})}\|\varphi'\|_{L^2(\R)}+
  \|(\alpha-\Delta^{\kappa})v\|_{L^2(\widehat{X};\widehat{\m})}\|\varphi\|_{L^2(\R)}
  \right)\|g\|_{L^2(X;\m)}\\
  &\leq 2\sqrt{C}(\alpha-C_{\kappa})^{-1/4}\left(\|\sqrt{\alpha-\Delta^{\kappa}}f\|_{L^2(\widehat{X};\widehat{\m})}+
  \|(\alpha-\Delta^{\kappa})f\|_{L^2(\widehat{X};\widehat{\m})}\right)\sqrt{\widehat{\mathscr{E}}_1(g\otimes\varphi,g\otimes\varphi)}\\
  &=:C\sqrt{\widehat{\mathscr{E}}_1(g\otimes\varphi,g\otimes\varphi)},
\end{align*}
where we used \eqref{eq:L^2Est} and 
\begin{align*}
\widehat{\mathscr{E}}(g\otimes \varphi,g\otimes\varphi)=\mathscr{E}(g,g)\|\varphi\|_{L^2(\R)}^2+\|g\|_{L^2(X;\m)}^2\|\varphi'\|_{L^2(\R)}^2
\end{align*}
for the last line. This is the desired result. 

In the case of $\varphi(0)\geq0$, we easily see 
\begin{align}
\int_{\R}|\varphi(a)|\sqrt{(\sqrt{\alpha-\Delta^{\kappa}}v(x,a))^2+\eps}\delta_0(\d a)=
\int_{-a_0}^0\frac{\partial}{\partial a}\left\{\varphi(a)\sqrt{(\sqrt{\alpha-\Delta^{\kappa}}v(x,a))^2+\eps} \right\}\d a.\label{eq:negativeCase}
\end{align}
By using \eqref{eq:negativeCase}, we can follow the same argument as the case where $\varphi(0)\leq0$. Therefore the proof is completed. 
\end{proof} 

Thanks to Lemma~\ref{lem:finiteEnergy}, $\nu$ is of finite $1$-order energy integral. Then for 
each $\beta>0$, there exists a unique $U_{\beta}\nu\in D(\mathscr{E})$ such that 
the following relation holds: 
\begin{align}
\widehat{\mathscr{E}}_{\beta}(U_{\beta}\nu,g\otimes\varphi)=\int_{\widehat{X}}(g\otimes\varphi)(x,a)\nu(\d x\d a),\quad g\in {\rm Test}(X),\varphi\in C_c^{\infty}(\R).\label{eq:Potential}
\end{align}
\begin{lem}\label{lem:Potential}
We have the following under $f\in D(\Delta^{\kappa})\cap \mathscr{B}^*(X)$, $\alpha>C_{\kappa}$ and $\beta>0$: 
\begin{enumerate}
\item[{\rm (1)}] $U_{\alpha}\nu=v$. 
\item[{\rm (2)}] $U_{\beta}\nu=v-(\beta-\alpha)\widehat{R}_{\beta}v$ holds, where $(\widehat{R}_{\beta})_{\beta>0}$ is the resolvent of $(\widehat{P}_t)_{t\geq0}$. 
\end{enumerate}
\end{lem}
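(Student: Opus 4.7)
The plan is to verify part~(1) by computing $\widehat{\mathscr{E}}_{\alpha}(v,g\otimes\varphi)$ directly against the dense core $\widehat{\mathscr{C}}={\rm Test}(X)\otimes C_c^{\infty}(\R)$ supplied by Lemma~\ref{lem:Core}, and to match it with $\int_{\widehat{X}}(g\otimes\varphi)\,\d\nu$; part~(2) will then follow from (1) via the standard resolvent identity for $\alpha$-potentials.

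For (1), expand $\widehat{\mathscr{E}}(v,g\otimes\varphi)$ using \eqref{eq:Identity}. Since $v(x,a)=u(x,|a|)$ with $u(\cdot,a)=Q_a^{(\alpha),\kappa}f$ and $\partial u/\partial a=-\sqrt{\alpha-\Delta^{\kappa}}\,u$ by \eqref{eq:recursive}, the symmetric extension $v$ has weak $a$-derivative $-{\rm sgn}(a)\sqrt{\alpha-\Delta^{\kappa}}\,v(x,a)$ on $\{a\neq0\}$ and jumps by $-2\sqrt{\alpha-\Delta^{\kappa}}f(x)$ across $\{a=0\}$. Splitting $\int_{\R}$ into $\int_{-\infty}^{0}+\int_{0}^{\infty}$ and integrating by parts on each half-line, together with $\partial^2 u/\partial a^2=(\alpha-\Delta^{\kappa})u$ from \eqref{eq:recursive}, yields
\[
\int_{\R}\frac{\partial v}{\partial a}(x,a)\varphi'(a)\d a=2\varphi(0)\sqrt{\alpha-\Delta^{\kappa}}f(x)-\int_{\R}\varphi(a)(\alpha-\Delta^{\kappa})v(x,a)\d a.
\]
After integration in $x$, the boundary piece reproduces exactly $\int_{\widehat{X}}(g\otimes\varphi)\,\d\nu$. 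Combining the remaining bulk term with the horizontal contribution $\int_{\R}\mathscr{E}(v(\cdot,a),g)\varphi(a)\d a$ and with $\alpha(v,g\otimes\varphi)_{L^2(\widehat{X};\widehat{\m})}$ via the $L^2$-adjoint identity $\mathscr{E}^{\kappa}(v(\cdot,a),g)=-(\Delta^{\kappa}v(\cdot,a),g)_{L^2(X;\m)}$, and absorbing the resulting $\widehat{\kappa}$-coupling on the form side, all volume terms cancel and we are left with $\widehat{\mathscr{E}}_{\alpha}(v,g\otimes\varphi)=\int(g\otimes\varphi)\,\d\nu$. By density of $\widehat{\mathscr{C}}$ in $D(\widehat{\mathscr{E}})$, this establishes $U_{\alpha}\nu=v$.

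For (2), with $v=U_{\alpha}\nu$ from (1), subtract the characterising identities for $U_{\beta}\nu$ and $U_{\alpha}\nu$ tested against an arbitrary $\phi\in D(\widehat{\mathscr{E}})$:
\[
\widehat{\mathscr{E}}_{\beta}(U_{\beta}\nu-v,\phi)=\widehat{\mathscr{E}}_{\beta}(U_{\beta}\nu,\phi)-\widehat{\mathscr{E}}_{\alpha}(v,\phi)-(\beta-\alpha)(v,\phi)_{L^2(\widehat{X};\widehat{\m})}=-(\beta-\alpha)(v,\phi)_{L^2(\widehat{X};\widehat{\m})}.
\]
The right-hand side is the characterisation of $-(\beta-\alpha)\widehat{R}_{\beta}v$, hence $U_{\beta}\nu=v-(\beta-\alpha)\widehat{R}_{\beta}v$.

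The main obstacle will be the rigorous justification of the distributional $a$-integration by parts at the hyperplane $\{a=0\}$: $v$ is only weakly $a$-differentiable and $v(\cdot,a)$ belongs to $D(\Delta^{\kappa})$ only for each fixed $a$, so identifying $\partial_a^2 v$ in the sense of distributions as $(\alpha-\Delta^{\kappa})v-2\sqrt{\alpha-\Delta^{\kappa}}f(x)\delta_0(a)$ requires the smooth $L^2(X;\m)$-valued dependence on $a$ recorded in \eqref{eq:recursive} together with the finite-energy bound on $\nu$ from Lemma~\ref{lem:finiteEnergy}.
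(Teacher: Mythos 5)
Your proposal is correct and follows essentially the same route as the paper: for part~(1) you perform the same integration by parts in $a$ over the two half-lines (the paper carries this out explicitly by combining $-\varphi'(a)$ and $\varphi'(-a)$ into $-\frac{\d}{\d a}(\varphi(a)+\varphi(-a))$, producing the boundary term $2\sqrt{\alpha-\Delta^{\kappa}}u(x,0)\varphi(0)$ and the bulk term $-\int(\alpha-\Delta^{\kappa})u(\varphi(a)+\varphi(-a))\d a$), then plug into $\widehat{\mathscr{E}}_{\alpha}$ via \eqref{eq:Identity} and cancel; for part~(2) you use the identical resolvent computation $\widehat{\mathscr{E}}_{\beta}(v-(\beta-\alpha)\widehat{R}_{\beta}v,\cdot)=\widehat{\mathscr{E}}_{\alpha}(v,\cdot)$. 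Your last-paragraph worry about the distributional $a$-derivative is not really an obstacle: as recorded just above \eqref{eq:recursive}, $a\mapsto u(\cdot,a)$ is a smooth $L^2(X;\m)$-valued curve, so the half-line integration by parts is classical and is exactly what the paper does.
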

\begin{proof}[\bf Proof]  
(1) We need to show \eqref{eq:Potential}. By using the integration by parts formula, 
for $\m$-a.e.~$x\in X$, we have 
\begin{align}
\int_{\R}&\frac{\partial v}{\partial a}(x,a)\varphi'(a)\d a\label{eq:Poten1}\\
&=-\int_0^{\infty}\sqrt{\alpha-\Delta^{\kappa}}u(x,a)\varphi'(a)\d a+\int_0^{\infty}
\sqrt{\alpha-\Delta^{\kappa}}u(x,a)\varphi'(-a)\d a\notag\\
&=-\int_0^{\infty}\sqrt{\alpha-\Delta^{\kappa}}u(x,a)\frac{\d}{\d a}(\varphi(a)+\varphi(-a))\d a\notag\\
&=2\sqrt{\alpha-\Delta^{\kappa}}u(x,0)\varphi(0)+\int_0^{\infty}\frac{\partial}{\partial a}
\sqrt{\alpha-\Delta^{\kappa}}u(x,a)(\varphi(a)+\varphi(-a))\d a\notag\\
&=2\sqrt{\alpha-\Delta^{\kappa}}u(x,0)\varphi(0)-
\int_0^{\infty}(\alpha-\Delta^{\kappa})u(x,a)(\varphi(a)+\varphi(-a))\d a\notag\\
&=2\sqrt{\alpha-\Delta^{\kappa}}v(x,0)\varphi(0)-
\int_{\R}(\alpha-\Delta^{\kappa})v(x,a)\varphi(a)\d a.\notag
\end{align}
Then \eqref{eq:Poten1} leads us to our desired equality as follows: 
\begin{align*}
\widehat{\mathscr{E}}_{\alpha}(v,g\otimes\varphi)&=\int_{\R}\d a\varphi(a)
\int_X\sqrt{\alpha-\Delta^{\kappa}}v(x,a)\sqrt{\alpha-\Delta^{\kappa}}g(x)\m(\d x)\\
&\hspace{1cm}+
\int_X\m(\d x)g(x)\left(2\sqrt{\alpha-\Delta^{\kappa}}v(x,0)\varphi(0)-\int_{\R}(\alpha-\Delta^{\kappa})v(x,a)\varphi(a)\d a \right)\\
&=2\int_X\sqrt{\alpha-\Delta^{\kappa}}v(x,0)g(x)\varphi(0)\m(\d x)\\
&=\int_{\widehat{X}}(g\otimes\varphi)(x,a)\nu(\d x\d a).
\end{align*}
(2) We recall $\widehat{\mathscr{E}}_{\beta}(\widehat{R}_{\beta}v,g\otimes\varphi)=(v,g\otimes\varphi)_{L^2(\widehat{X};\widehat{\m})}$. Then we have 
\begin{align*}
\widehat{\mathscr{E}}_{\beta}(v-(\beta-\alpha)\widehat{R}_{\beta}v,g\otimes\varphi)&=
\widehat{\mathscr{E}}_{\beta}(v,g\otimes\varphi)-(\beta-\alpha)\cdot(v,g\otimes\varphi)_{L^2(\widehat{X};\widehat{\m})}\\
&=\widehat{\mathscr{E}}_{\alpha}(v,g\otimes\varphi)\\
&=\int_{\widehat{X}}(g\otimes\varphi)(x,a)\nu(\d x\d a),
\end{align*}
where we used (1) for the last line. Hence the proof of (2) is now completed. 
\end{proof} 
Thanks to \cite[Lemma~5.4.1]{FOT} and Lemma~\ref{lem:Potential}, we have 
\begin{align*}
\wh{N}_t^{[v]}=\alpha\int_0^t\tilde{v}(\widehat{X}_s)\d s-A_t^{\nu},\quad t\geq0,
\end{align*}
where $\tilde{v}$ is an $\widehat{\mathscr{E}}$-quasi-continuous $\widehat{\m}$-version of $v$ and $A^{\nu}$ is the CAF corresponding to $\nu$. Since $\nu$ does not charge out of $X\times\{0\}$, due to \cite[Theorem~5.1.5]{FOT}, $A_{t\land\tau}^{\nu}=0$ holds. Thus we get
\begin{align}
\wh{N}^{[v]}_{t\land\tau}=\alpha\int_0^{t\land\tau}\tilde{v}(\widehat{X}_s)\d s.\label{eq:CAFzero}
\end{align}
By summarizing \eqref{eq:FukushimaDecomp}, \eqref{eq:quadraticVariation} and \eqref{eq:CAFzero}, 
we have the following semi-martingale decomposition which plays a crucial role later. 

\begin{prop}\label{prop:semimartingale}
Suppose $\alpha>C_{\kappa}$, $f\in D(\Delta^{\kappa})\cap \mathscr{B}^*(X)$ and set $v(x,a):=
q_{|a|}^{(\alpha),\kappa}f(x)$ for $(x,a)\in \widehat{X}$. Then
we have the semi-martingale decomposition
\begin{align}
\tilde{v}(\widehat{X}_{t\land\tau})-\tilde{v}(\widehat{X}_0)=\wh{M}_{t\land\tau}^{[v]}+\alpha\int_0^{t\land\tau}\tilde{v}(\widehat{X}_s)\d s,\quad t\geq0,\label{eq:semimartingale1}
\end{align}
under $\P_{\hat{x}}$ for q.e.~$\hat{x}=(x,a)$. Moreover it holds
\begin{align}
\langle \wh{M}^{[v]}\rangle_{t\land\tau}&=\int_0^{t\land\tau}\left\{\Gamma(v,v)(\widehat{X}_s)+
\left(\frac{\partial v}{\partial a}(\widehat{X}_s) \right)^2 \right\}\d s.\label{eq:semimartingale2}
\end{align}
In particular, by setting $M_t:=\wh{M}^{[v]}_{t\land\tau}$, 
\begin{align}
\E_{(x,a)}[\langle \wh{M}\rangle_{\infty}]&= \E_{(x,a)}
\left[\int_0^{\tau}\left\{\Gamma(v,v)+\left(\frac{\partial v}{\partial a}\right)^2\right\}(\widehat{X}_s)
\d s \right]<\infty\label{eq:semimartingale3}
\end{align}
for $\widehat{\m}$-a.e.~$(x,a)\in \widehat{G}:=X\times]0,+\infty[$, 
because the absorbing process $\widehat{\bf X}_{\widehat{G}}$ on 
$\widehat{G}$ is an $\widehat{\m}$-symmetric transient process and 
$\Gamma(v)+\left(\frac{\partial v}{\partial a} \right)^2\in L^1(\widehat{G};\widehat{\m})$.
\end{prop}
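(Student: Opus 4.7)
The plan is to assemble the three displays \eqref{eq:semimartingale1}, \eqref{eq:semimartingale2}, \eqref{eq:semimartingale3} from the Fukushima decomposition and the potential-theoretic facts already in place. For \eqref{eq:semimartingale1} and \eqref{eq:semimartingale2} there is essentially nothing to do: stopping the Fukushima decomposition \eqref{eq:FukushimaDecomp} at $t \wedge \tau$ and substituting $\wh{N}^{[v]}_{t \wedge \tau} = \alpha \int_0^{t \wedge \tau} \tilde{v}(\wh{X}_s)\,\d s$ from \eqref{eq:CAFzero} yields \eqref{eq:semimartingale1}, while stopping the quadratic variation formula \eqref{eq:quadraticVariation} at $t \wedge \tau$ gives \eqref{eq:semimartingale2} verbatim.

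For \eqref{eq:semimartingale3}, set $g := \Gamma(v,v) + (\partial v/\partial a)^2$, so that by \eqref{eq:semimartingale2} we have $\langle \wh{M}\rangle_\infty = \int_0^\tau g(\wh{X}_s)\,\d s$. The target reduces to showing $\E_{(x,a)}[\int_0^\tau g(\wh{X}_s)\,\d s] < \infty$ for $\wh{\m}$-a.e.~$(x,a) \in \widehat{G}$. I would split this into two pieces. \emph{Integrability of $g$:} Lemma~\ref{lem:Domain} gives $v \in D(\wh{\mathscr{E}})$, and combining this with the tensor-product identity \eqref{eq:Identity} (extended by density of Lemma~\ref{lem:Core} from $D(\mathscr{E})\otimes H^{1,2}(\R)$ to $D(\wh{\mathscr{E}})$) produces $\int_{\wh{X}} g\,\d\wh{\m} = \wh{\mathscr{E}}(v,v) < \infty$, hence $g\in L^1(\wh{X};\wh{\m})$ and a fortiori $g \in L^1(\widehat{G};\wh{\m})$. \emph{Transience of $\wh{\bf X}_{\widehat{G}}$:} The one-dimensional Brownian motion on $]0,+\infty[$ killed at $0$ is transient, with $0$-order Green kernel $G(a,b) = 2(a \wedge b)$; since $\wh{\bf X}$ is the independent product of ${\bf X}$ and one-dimensional Brownian motion, the absorbing process on $\widehat{G} = X \times ]0,+\infty[$ inherits transience from its Brownian factor (a reference function of product form suffices).

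The main technical step is the passage from transience plus the $L^1$ bound on $g$ to $\wh{\m}$-a.e.~finiteness of the Green potential $G^{\widehat{G}} g$. The standard route is to choose a strictly positive reference function $g_0$ for the transient form $\wh{\mathscr{E}}^{\widehat{G}}$ with $G^{\widehat{G}} g_0$ bounded on a Borel exhaustion of $\widehat{G}$, then use $\wh{\m}$-symmetry of $G^{\widehat{G}}$ to rewrite $\int_{\widehat{G}} g_0\, G^{\widehat{G}} g \,\d\wh{\m} = \int_{\widehat{G}} g\, G^{\widehat{G}} g_0\,\d\wh{\m} < \infty$, from which $G^{\widehat{G}} g < \infty$ $\wh{\m}$-a.e. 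A more concrete alternative is Fubini against the product-process Green kernel of the form $\int_0^\infty p_s^{X}(x,y)\, q_s^{(0,\infty)}(a,b)\,\d s$ combined with the identity $\int_0^\infty q_s^{(0,\infty)}(a,b)\,\d s = 2(a \wedge b)$, reducing the claim to the already-established $L^1$ integrability of $g$. Either way, once the expectation in \eqref{eq:semimartingale3} is finite, the identification with $\int_0^\tau g(\wh{X}_s)\,\d s$ from \eqref{eq:semimartingale2} completes the proof.
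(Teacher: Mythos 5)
Your proposal follows the paper's own route exactly: \eqref{eq:semimartingale1} and \eqref{eq:semimartingale2} come from stopping the Fukushima decomposition \eqref{eq:FukushimaDecomp} together with \eqref{eq:CAFzero} (supplied by Lemma~\ref{lem:Potential} and \cite[Lemma~5.4.1]{FOT}) and the energy-measure formula \eqref{eq:quadraticVariation} at $t\wedge\tau$, and for \eqref{eq:semimartingale3} the paper's stated justification is precisely the transience-plus-$L^1$ argument you spell out (cf.\ the later invocation of \cite[(1.5.4)]{FOT} in the proof of Lemma~\ref{lem:ShigekawaInequality}). One trivial slip: with the paper's normalization $\frac{\partial^2}{\partial a^2}$ for the Brownian factor, the $0$-order Green kernel on $]0,+\infty[$ killed at $0$ is $a\wedge b$ (consistent with \eqref{eq:Shigekawa3.10}), not $2(a\wedge b)$, though nothing in your argument depends on the constant.
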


Since $v(x,a)=u(x,a)$ holds for $a\geq0$, this proposition also gives the semi-martingale decomposition 
of $u(\widehat{X}_{t\land\tau})$. 

Before closing this subsection, we need the following lemma to allow $\m\otimes\delta_a$ is an 
initial distribution. 
\begin{lem}\label{lem:Capacity}
We have the following:
\begin{enumerate}
\item[\rm(1)] Let $\{F_n\}$ be an $\wh{\mathscr{E}}$-nest of closed subsets of $\wh{X}$. Then 
$\{(F_n)_a\}$ is an $\mathscr{E}$-nest of closed subsets of $X$ for $m$-a.e.~$a\in\R$, where  
$(F_n)_a:=\{x\in X\mid (x,a)\in F_n\}$. 
\item[\rm(2)] Let $N$ be an $\wh{\mathscr{E}}$-exceptional set. Then, for $m$-a.e.~$a\in \R$, $N_a:=\{x\in X\mid (x,a)\in N\}$ is an $\mathscr{E}$-exceptional set. In particular, 
$\m\otimes\delta_a$ does not charge any $\widehat{\mathscr{E}}$-exceptional set for $m$-a.e.~$a\in\R$. 
\item[\rm(3)] Let $(x,a)\mapsto u(x,a)$ be an $\wh{\mathscr{E}}$-quasi continuous function. Then, for $m$-a.e.~$a\in \R$, 
$x\mapsto u(x,a)$ is an $\mathscr{E}$-quasi continuous function. 
\end{enumerate}
\end{lem}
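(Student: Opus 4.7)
The strategy is to first establish (1) by exploiting the tensor-product structure of $(\widehat{\mathscr{E}},D(\widehat{\mathscr{E}}))$ given in Lemma~\ref{lem:Core}, then derive (2) directly from (1) together with the standard fact that $\mathscr{E}$-exceptional sets are $\m$-negligible, and finally obtain (3) by combining (1) with the observation that the horizontal embedding $x\mapsto(x,a)$ is a homeomorphism of each slice onto its image.

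The essential tool for (1) is a slice-wise version of the identity \eqref{eq:Identity}. Specifically, I would first extend \eqref{eq:Identity} from $D(\mathscr{E})\otimes H^{1,2}(\R)$ to all $u\in D(\widehat{\mathscr{E}})$ in the form of the inequality
\[
\int_{\R}\mathscr{E}_1(u(\cdot,a),u(\cdot,a))\,m(\d a)\le \widehat{\mathscr{E}}_1(u,u),
\]
together with the assertion that $u(\cdot,a)\in D(\mathscr{E})$ for $m$-a.e.~$a$. This is done by taking a $\widehat{\mathscr{E}}_1$-approximating sequence $u_k\in D(\mathscr{E})\otimes H^{1,2}(\R)$ (which exists by Lemma~\ref{lem:Core}), using the exact identity on the tensor-product subspace to see that $\{u_k(\cdot,a)\}$ is $\mathscr{E}_1$-Cauchy for $m$-a.e.~$a$ (along a subsequence), and choosing a Fubini-good $\widehat{\m}$-version of $u$. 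Once this is in place, given an $\widehat{\mathscr{E}}$-nest $\{F_n\}$ and test elements $f\in D(\mathscr{E})$, $\varphi\in C_c^{\infty}(\R)$, choose $v_k\in D(\widehat{\mathscr{E}})_{F_{n_k}}$ with $v_k\to f\otimes\varphi$ in $\widehat{\mathscr{E}}_1$. Applying the inequality above to $v_k-f\otimes\varphi$ and passing to a subsequence, I get $v_k(\cdot,a)\to f\varphi(a)$ in $\mathscr{E}_1$ for $m$-a.e.~$a$, while $v_k=0$ $\widehat{\m}$-a.e.~on $\widehat{X}\setminus F_{n_k}$ forces $v_k(\cdot,a)\in D(\mathscr{E})_{(F_{n_k})_a}$ for $m$-a.e.~$a$ by Fubini.

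To complete (1), I would fix a countable $\mathscr{E}_1$-dense family $\{f_i\}\subset D(\mathscr{E})$ together with a countable collection $\{\varphi_j\}\subset C_c^{\infty}(\R)$ such that every $a\in\R$ has some $\varphi_j$ with $\varphi_j(a)\ne 0$, run the previous argument for each pair $(i,j)$, and discard the countable union of the resulting $m$-null sets; this yields $\mathscr{E}_1$-density of $\bigcup_n D(\mathscr{E})_{(F_n)_a}$ in $D(\mathscr{E})$ for $m$-a.e.~$a$. For (2), take an $\widehat{\mathscr{E}}$-nest $\{F_n\}$ with $N\subset\bigcap_n(\widehat{X}\setminus F_n)$, so $N_a\subset\bigcap_n(X\setminus(F_n)_a)$; (1) then makes $N_a$ an $\mathscr{E}$-exceptional, hence $\m$-negligible set, and consequently $(\m\otimes\delta_a)(N)=\m(N_a)=0$ for $m$-a.e.~$a$. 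For (3), pick an $\widehat{\mathscr{E}}$-nest $\{F_n\}$ on which $u$ is continuous; (1) gives that $\{(F_n)_a\}$ is an $\mathscr{E}$-nest for $m$-a.e.~$a$, while $x\mapsto(x,a)$ is a homeomorphism of $(F_n)_a$ onto $F_n\cap(X\times\{a\})$, so the restriction of $u|_{F_n}$ along the slice inherits continuity.

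The main obstacle is the extension of \eqref{eq:Identity} to general $u\in D(\widehat{\mathscr{E}})$: one must ensure that a $\widehat{\mathscr{E}}_1$-Cauchy sequence of tensors produces a representative whose slices are $\mathscr{E}$-good for $m$-a.e.~$a$ and whose slice-wise $\mathscr{E}_1$-Cauchy limit coincides with the actual slice of the $\widehat{\mathscr{E}}_1$-limit. This is routine for product Dirichlet forms but requires careful bookkeeping between quasi-continuous versions, Fubini selection, and subsequence extraction; any slippage here would propagate in particular to (3), where the choice of a quasi-continuous representative of $u$ must be compatible with the $m$-a.e.~slice selection obtained in (1).
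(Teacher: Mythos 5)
Your proposal is correct, but it takes a genuinely different route from the paper's proof of part (1). The paper works with the \emph{capacity} characterization of nests: it introduces the $2$-excessive function $\wh{h}:=G_1\varphi\otimes G_1^w\phi$, defines the weighted capacity $\wh{\rm Cap}_{\wh{h},2}$, passes to the $\wh{h}$-transformed form $(\wh{\mathscr{E}}^{\wh{h}},D(\wh{\mathscr{E}}^{\wh{h}}))$, and invokes $\widehat{\rm O}$kura's skew-product inequality $\wh{\mathscr{E}}_2^{\wh{h}}(u,u)\geq\int_{\R}\mathscr{E}_1^{\wh{h}(\cdot,a)}(u(\cdot,a),u(\cdot,a))\,m(\d a)$ together with the equivalence \lq\lq{}nest $\Longleftrightarrow$ $\wh{h}$-weighted capacity $\to 0$\rq\rq\ from \cite[Ch.~III, Thm.~2.11(i)]{MR}, applied once on $\wh{X}$ and once, slice by slice, on $X$. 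You instead use the \emph{density} characterization of nests directly: you extend the tensor-product identity \eqref{eq:Identity} to the slice-wise inequality $\int_{\R}\mathscr{E}_1(u(\cdot,a),u(\cdot,a))\,m(\d a)\leq\widehat{\mathscr{E}}_1(u,u)$ for general $u\in D(\widehat{\mathscr{E}})$ by Fubini and subsequence extraction, then verify $\mathscr{E}_1$-density of $\bigcup_n D(\mathscr{E})_{(F_n)_a}$ for $m$-a.e.~$a$ by approximating a countable family $f_i\otimes\varphi_j$ of tensors, with $\{f_i\}$ $\mathscr{E}_1$-dense (available since $D(\mathscr{E})$ is separable under quasi-regularity) and $\{\varphi_j\}$ covering $\R$ by nonvanishing bumps, and discarding a countable union of $m$-null sets. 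Both arguments are sound. The paper's approach is shorter to write because it offloads the hard work onto existing capacity and skew-product machinery, and it manifestly produces a quantitative slice-wise capacity estimate; your approach is self-contained and elementary but, as you correctly flag, requires careful bookkeeping for the Fubini-good representative — in particular checking that the slice of the $\widehat{\mathscr{E}}_1$-limit agrees $\m$-a.e.~with the slice-wise $\mathscr{E}_1$-limit, which does go through by a standard double-subsequence argument once both convergences are upgraded to $\widehat{\m}$-a.e.~pointwise convergence. Your deductions of (2) and (3) from (1) are correct and are exactly what the paper intends (the paper only states that (2) and (3) are consequences of (1) without spelling them out), including the key observation that $x\mapsto(x,a)$ is a homeomorphism of $(F_n)_a$ onto $F_n\cap(X\times\{a\})$.
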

\begin{proof}[\bf Proof]  
(2) and (3) are consequences of (1). We only prove (1). 
For $\m$-a.e.~strictly positive $\varphi\in L^2(X;\m)$ and $m$-a.e.~strictly positive $\phi\in L^2(\R)$, 
we set $\wh{h}:=G_1\varphi\otimes G_1^w\phi$, where $G_1$ (resp.~$G_1^w$) is the $1$-order resolvent operator on $L^2(X;\m)$ (resp.~$L^2(\R)$)  
associated to $(\mathscr{E},D(\mathscr{E}))$ (resp.~$(\mathbb{D},D(\mathbb{D}))$. 
Then $\wh{h}$ is a $2$-excessive function of $L^2(\wh{X};\wh{\m})$.    
Let $\{F_n\}$ be an $\wh{\mathscr{E}}$-nest of closed subsets of $\wh{X}$. 
Denote the $2$-order $\wh{h}$-weighted capacity by $\wh{\rm Cap}_{\wh{h},2}$ defined by 
\begin{align*}
\wh{\rm Cap}_{\wh{h},2}(O):=\inf\left\{\left.\wh{\mathscr{E}}_2(v,v)\,\right|\, 
v\in D(\wh{\mathscr{E}}),\,
v\geq \wh{h} \;\wh{\m}\text{-a.e.~on }O  \right\}
\end{align*}
for an open subset $O$ of $\wh{X}$. Since $\wh{h}\in D(\wh{\mathscr{E}})$ is $2$-excessive, 
we can deduce 
\begin{align}
\lim_{n\to\infty}\wh{\rm Cap}_{\wh{h},2}(\wh{X}\setminus F_n)=0\label{eq:CapNest}
\end{align}
in the same way of the proof of \cite[Chapter III, Theorem~2.11(i)]{MR} with the help of the 
$2$-order version of \cite[Chapter III, Proposition~1.5(iv)]{MR}. Note here that the proof of 
implication \lq\lq $\{F_n\}$ is an $\wh{\mathscr{E}}$-nest $\Longrightarrow$ \eqref{eq:CapNest}\rq\rq\; above works 
for such $\wh{h}$, though the description in \cite[Chapter III, Theorem~2.11(i)]{MR} requires that $\wh{h}$ has the shape $\wh{h}=\wh{G}_1\varphi$ for some $\wh{m}$-a.e.~strictly positive $\varphi\in L^2(\wh{X};\wh{\m})$.
 Now we consider the $\wh{h}$-transformed 
positivity preserving form $(\wh{\mathscr{E}}^{\wh{h}}, D(\wh{\mathscr{E}}^{\wh{h}}))$ on $L^2(\wh{X};\wh{h}^2\wh{\m})$ and its 
usual $2$-order capacity $\wh{\rm Cap}^{\wh{h}}_2$. Then
\begin{align*}
\wh{\rm Cap}^{\wh{h}}_2(\wh{X}\setminus F_n)
&=\inf\left\{\left. \wh{\mathscr{E}}^{\wh{h}}_2(u,u)
\,\right|\, u\in D(\wh{\mathscr{E}}^{\wh{h}}),\, u\geq1\;\wh{h}^2\wh{\m}\text{-a.e.~on }\wh{X}\setminus F_n
\right\}\\
&=\inf\left\{\left. \wh{\mathscr{E}}(u\wh{h},u\wh{h})+2\int_{\wh{X}}u^2\wh{h}^2\d\wh{\m}
\,\right|\, u\in D(\wh{\mathscr{E}}^{\wh{h}}),\, u\geq1\;\wh{\m}\text{-a.e.~on }\wh{X}\setminus F_n
\right\}\\
&=\inf\left\{\left. \wh{\mathscr{E}}_2(v,v)\,\right|\,v\in D(\wh{\mathscr{E}}),\,v\geq\wh{h}\; 
\wh{\m}\text{-a.e.~on }\wh{X}\setminus F_n
\right\}\\
&=\wh{\rm Cap}_{\wh{h},2}(\wh{X}\setminus F_n)\downarrow0\quad\text{ as }\quad n\to\infty.
\end{align*}
In the same way of the proof of $\widehat{\rm O}$kura~\cite[Theorem~4.1(4)]{Okura}, we have
\begin{align*}
\wh{\mathscr{E}}^{\wh{h}}_2(u,u)\geq \int_{\R}\mathscr{E}_2^{\wh{h}(\cdot,a)}(u(\cdot,a),u(\cdot,a))m(\d a)
\geq \int_{\R}\mathscr{E}_1^{\wh{h}(\cdot,a)}(u(\cdot,a),u(\cdot,a))m(\d a).
\end{align*}
From this, we can deduce
\begin{align*}
{\rm Cap}_{\wh{h}(\cdot,a)}(X\setminus (F_n)_a)={\rm Cap}_1^{\wh{h}(\cdot,a)}(X\setminus (F_n)_a)\to0 \quad\text{ as }\quad n\to\infty\quad\text{ for }\quad m\text{-a.e.~}a\in\R.
\end{align*}
Here ${\rm Cap}_{\wh{h}(\cdot,a)}$ is an $\wh{h}(\cdot,a)$-weighted capacity for $(\mathscr{E},D(\mathscr{E}))$.  
Therefore, $\{(F_n)_a\}$ is an $\mathscr{E}$-nest for $m$-a.e.~$a\in\R$ by applying \cite[Chapter III, Theorem~2.11(i)]{MR}. Note here that 
$\wh{h}(\cdot,a)=G_1\varphi G_1^w\phi(a)$ satisfies  \cite[the condition above in Chapter III, Theorem~2.11(i)]{MR}.
\end{proof} 

\subsection{Proof of \eqref{eq:LittlewoodPaleyStein1} for 
$f\in D(\Delta)\cap L^p(X;\m)\cap \mathscr{B}^*(X)$ under  $p\in]1,2[$}\label{subsec:3.2}

In this subsection, we return to the proof of the upper estimate \eqref{eq:LittlewoodPaleyStein1} in Theorem~\ref{thm:main1} in the case of $1<p<2$.
Recall the $\widehat{\m}$-symmetric diffusion process $\widehat{\bf X}=(\widehat{X}_t,\P_{\hat{x}})$ on $\widehat{X}:=X\times\R$ with 
$\widehat{X}_t:=(X_t, B_t)$.  
We need the following identity for our later use. 
See Shigekawa~\cite[Proposition~3.10]{ShigekawaText} for the proof.

\begin{lem}[{{\bf Shigekawa~\cite[(6.13)]{Shigekawa1},\cite[Proposition~3.10]{ShigekawaText}}}]\label{lem:ShigekawaIdntity}
Let $j:X\times[0,+\infty[\to[0,+\infty[$ be a measurable function. Then 
\begin{align}
\E_{\m\otimes\delta_a}\left[\int_0^{\tau}j(\widehat{X}_s)\d s \right]=\int_X\m(\d x)\int_0^{\infty}(a\land t)j(x,t)\d t\label{eq:Shigekawa3.10}
\end{align}
for $a\geq0$. Here $j$ is understood to be a function on $\wh{X}_{\wh{\partial}}$ with $j(\wh{\partial})=0$. 
\end{lem}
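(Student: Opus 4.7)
The plan is to exploit the product structure: under $\P_{(x,a)}=\P_x\otimes\P_a^{\rightarrow}$, the processes $(X_s)$ and $(B_s)$ are independent, and $\tau=\inf\{s>0:B_s=0\}$ depends only on $B$. First I would apply Tonelli's theorem to write
\begin{align*}
\E_{\m\otimes\delta_a}\left[\int_0^{\tau}j(\widehat{X}_s)\,\d s\right]
=\int_0^{\infty}\int_X\m(\d x)\,\E_{(x,a)}\bigl[j(X_s,B_s)\,\1_{\{s<\tau\}}\bigr]\,\d s,
\end{align*}
where the convention $j(\widehat{\partial})=0$ absorbs the lifetime of ${\bf X}$ into the integrand.

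Next, using independence of $X$ and $B$, the inner expectation disintegrates as
\begin{align*}
\E_{(x,a)}\bigl[j(X_s,B_s)\,\1_{\{s<\tau\}}\bigr]
=\int_{\R}\P_a^{\rightarrow}(B_s\in\d y,\,s<\tau)\,P_sj(\cdot,y)(x),
\end{align*}
where $P_s$ denotes the semigroup of ${\bf X}$ extended by $0$ at $\widehat{\partial}$. Integrating against $\m(\d x)$ and invoking $\m$-symmetry (and, where needed, conservativeness $P_s1=1$) yields $\int_XP_sj(\cdot,y)(x)\m(\d x)=\int_Xj(x,y)\m(\d x)$, so the $s$-integral collapses to
\begin{align*}
\int_X\m(\d x)\,\E_a^{\rightarrow}\left[\int_0^{\tau}j(x,B_s)\,\d s\right].
\end{align*}

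Finally, I would compute the one-dimensional Green kernel of Brownian motion with generator $\partial_a^2$ absorbed at $0$ starting from $a\geq0$: the function $G(a,t):=\E_a^{\rightarrow}[\int_0^{\tau}\1_{\{B_s\in\d t\}}\d s]/\d t$ solves $-\partial_t^2G(a,\cdot)=\delta_a$ on $]0,\infty[$ with Dirichlet condition $G(a,0)=0$ and bounded at $+\infty$, which pins down $G(a,t)=a\land t$. Substituting this into the inner expectation by Tonelli gives precisely $\int_0^\infty(a\land t)j(x,t)\,\d t$, completing the identity.

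The main technical point is the passage $\int_XP_sj(\cdot,y)\,\d\m=\int_Xj(\cdot,y)\,\d\m$, which requires $P_s1=1$ $\m$-a.e.; in the non-conservative case one only obtains the inequality $\leq$, which is nevertheless enough for the upper bounds needed in the subsequent Littlewood--Paley estimates. The remaining ingredients—independence of $X$ and $B$, the explicit killed Brownian Green kernel, and repeated use of Tonelli—are routine.
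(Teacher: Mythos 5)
The paper gives no proof of its own here; it simply cites Shigekawa's textbook. Your reconstruction --- independence of $X$ and $B$ under $\P_{(x,a)}=\P_x\otimes\P_a^{\rightarrow}$, Tonelli, $\m$-symmetry of $(P_s)$, and the Green kernel $a\wedge t$ of the Brownian motion with generator $\partial_t^2$ killed at $0$ --- is the standard argument and is correct; in particular you have the normalisation of the Green kernel right (the absence of the factor $\tfrac12$ in the generator is what gives $a\wedge t$ rather than $2(a\wedge t)$).

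Where you overreach is the final sentence. You are right that the step $\int_X P_s j(\cdot,y)\,\d\m=\int_X j(\cdot,y)\,\d\m$ requires conservativeness, and that without it one only gets $\le$, i.e.\ the left-hand side of \eqref{eq:Shigekawa3.10} is $\le$ the right-hand side. But the claim that this one-sided bound ``is nevertheless enough for the upper bounds needed in the subsequent Littlewood--Paley estimates'' has the inequality pointing the wrong way. Where the lemma is actually invoked --- in \eqref{eq:VarVar}, and again inside the proof of Lemma~\ref{lem:ShigekawaInequality} --- the expectation $\E_{\m\otimes\delta_{a_n}}[\,\cdot\,]$ is replaced by the deterministic double integral $\int_X\m(\d x)\int_0^\infty(a_n\wedge t)\cdots$ and the latter is then bounded from \emph{above} by $\|f\|_{L^p(X;\m)}^p$; that requires the double integral to be $\le$ the expectation, i.e.\ the $\ge$ direction of \eqref{eq:Shigekawa3.10}, which your inequality does not supply. (The paper's surrounding text shares this difficulty: the justification of \eqref{eq:invariant} asserts that ``$\m$ is the invariant measure of $\{X_t\}$'', which for an $\m$-symmetric sub-Markovian semigroup is again exactly conservativeness.) So either assume conservativeness outright for this lemma, or withdraw the claim that $\le$ alone suffices for the later estimates.
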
 

Since $\{X_t\}_{t\geq0}$ and $\{B_t\}_{t\geq0}$ are mutually independent under $\E_{\m\otimes\delta_a}$ 
and $\m$ is the invariant measure of $\{X_t\}_{t\geq0}$, we can see the following identity for any 
$h\in \mathscr{B}_b(X)$:
\begin{align}
\E_{\m\otimes\delta_a}[h(X_{\tau})]=\int_X h(x)\m(\d x)\label{eq:invariant}
\end{align}
under 
$h(\partial)=0$ (see \cite[(3.47)]{ShigekawaText} for the proof. In \cite{ShigekawaText}, $\m\in\mathscr{P}(X)$ is assumed, but 
\cite[Proposition~3.10]{ShigekawaText} remains valid for general $\sigma$-finite $\m$). 

Now, we consider the case  $v(x,a):=u(x,|a|):=q_{|a|}^{(\alpha)}f(x)$ for 
$f\in D(\Delta)\cap L^p(X;\m)\cap \mathscr{B}^*(X)$ and $\alpha>0$. 
Applying Lemma~\ref{lem:Domain} with $\kappa=0$, $v\in D(\widehat{\mathscr{E}})$. 
It should be noted here that we still assume ${\sf BE}_2(\kappa,\infty)$, not ${\sf BE}_2(0,\infty)$. 
We abbreviate $M_{t\land \tau}^{[v]}$ as $M_t$ for simplicity.
Combining Proposition~\ref{prop:semimartingale} and Lemma~\ref{lem:Capacity}(2) with $\kappa=0$ and $\alpha>0$, there exists a non-negative sequence $\{a_n\}_{n\in\mathbb{N}}$ 
such that $\lim_{n\to\infty}a_n=+\infty$, \eqref{eq:semimartingale1} and \eqref{eq:semimartingale2} hold under $\P_{\m\otimes\delta_{a_n}}$ for any $n\in\mathbb{N}$. 

We set $V_t:=\tilde{v}(\widehat{X}_{t\land\tau})$. We apply It\^o's formula to $V_t^2$. Proposition~\ref{prop:semimartingale} implies 
\begin{align}
\d (V_t)^2&=2V_t\d M_t+2\alpha V_t^2\d t+ \d\langle M\rangle_t\notag\\
&= 2V_t\d M_t+2(g_f(\widehat{X}_t)^2+\alpha V_t^2)\d t.\label{eq:VSquare}
\end{align}
Let $\eps>0$. By applying It\^o's formula to 
$(V_t^2+\eps)^{p/2}$ again, we also have
\begin{align*}
\d(V_t^2+\eps)^{\frac{p}{2}}&=p(V_t^2+\eps)^{\frac{p}{2}-1}V_t\d M_t\\
&\hspace{1cm}+p(V_t^2+\eps)^{\frac{p}{2}-1}\left(g_f(\widehat{X}_t)^2+\alpha V_t^2 \right)\d t\\
&\hspace{2cm}+\frac{p(p-2)}{2}(V_t^2+\eps)^{\frac{p}{2}-2}V_t^2\d\langle M\rangle_t\\
&\geq p(V_t^2+\eps)^{\frac{p}{2}-1} V_t\d M_t+p(p-1)(V_t^2+\eps)^{\frac{p}{2}-1}g_f(\widehat{X}_t)^2\d t,
\end{align*}
where we used $p<2$ for the last line.

Hence, by taking the expectation of the inequality above and using 
$u(x,a)=v(x,a)$ for $a\geq0$, we have 
\begin{align}
\E_{\m\otimes\delta_a}\left[p(p-1)\int_0^{\tau}(V_t^2+\eps)^{\frac{p}{2}-1}g_f(\widehat{X}_t)^2\d t \right]&\leq 
\E_{\m\otimes\delta_a}\left[(V_{\tau}^2+\eps)^{\frac{p}{2}}-(V_0^2+\eps)^{\frac{p}{2}} \right]\notag\\
&\leq \E_{\m\otimes\delta_a}\left[(V_{\tau}^2+\eps)^{\frac{p}{2}}\right]\notag\\
&=\E_{\m\otimes\delta_a}\left[\left(\tilde{u}(\widehat{X}_{\tau})^2+\eps\right)^{\frac{p}{2}} \right]\label{eq:VarVar}\\
&=\E_{\m\otimes\delta_a}\left[\left(f(X_{\tau})^2+\eps\right)^{\frac{p}{2}} \right]\notag\\&=
\int_X(|f(x)|^2+\eps)^{\frac{p}{2}}\m(\d x),\notag
\end{align}
where we used \eqref{eq:invariant} for the last line. Here, by recalling \eqref{eq:Shigekawa3.10}, the left hand side of \eqref{eq:VarVar} is equal to 
\begin{align*}
p(p-1)\int_X\m(\d x)\int_0^{\infty}(t\land a_n)(u(x,t)^2+\eps)^{\frac{p}{2}-1}g_f(x,t)^2\d t.
\end{align*}
Therefore, by letting $\eps\to0$ and $n\to\infty$, we have 
\begin{align}
p(p-1)\int_X\m(\d x)\int_0^{\infty}t u(x,t)^{p-2}g_f(x,t)^2\d t\leq\int_X|f(x)|^p\m(\d x).\label{eq:Desired}
\end{align}

Now, we recall the maximal ergodic inequality (see Shigekawa~\cite[Theorem~3.3]{ShigekawaText} for details)
\begin{align}
\left\| \sup_{t\geq0}|P_tf|\right\|_{L^p(X;\m)}\leq \frac{p}{p-1}\|f\|_{L^p(X;\m)},\qquad p>1.\label{eq:Desired*}
\end{align}
It leads us to
\begin{align}
\|G_f\|_{L^p(X;\m)}^p&=\int_X\m(\d x)\left\{\int_0^{\infty}t|u(x,t)|^{2-p}
|u(x,t)|^{p-2}g_f(x,t)^2\d t\right\}^{\frac{p}{2}}\notag \\
&\leq \int_X\m(\d x)\left\{\int_0^{\infty}t\left(\sup_{t\geq0}|P_tf(x)| \right)^{2-p}
|u(x,t)|^{p-2}g_f(x,t)^2\d t\right\}^{\frac{p}{2}}\notag\\
&\leq \left\{\int_X\left(\sup_{t\geq0}|P_tf(x)| \right)^{p}\m(\d x) \right\}^{\frac{2-p}{2}}\notag\\
&\hspace{1cm}\times
\left\{\int_X\int_0^{\infty}t|u(x,t)|^{p-2}g_f(x,t)^2\d t\m(\d x) \right\}^{\frac{p}{2}}\label{eq:UpperG} \\
&\hspace{-0.6cm}\stackrel{\eqref{eq:Desired},\eqref{eq:Desired*}}{\leq}\frac{p^{\frac{p(1-p)}{2}}}{(p-1)^{\frac{p(3-p)}{2}}}
\left\{\int_X|f(x)|^p\m(\d x) \right\}^{\frac{2-p}{2}}
\left\{\int_X|f(x)|^p\m(\d x) \right\}^{\frac{p}{2}}\notag\\
&=\frac{p^{\frac{p(1-p)}{2}}}{(p-1)^{\frac{p(3-p)}{2}}}
\|f\|_{L^p(X;\m)}^p.\notag
\end{align}
Thus \eqref{eq:LittlewoodPaleyStein1} holds for $f\in D(\Delta)\cap L^p(X;\m)\cap \mathscr{B}^*(X)$ under $p\in]1,2[$ and $\alpha>0$. 

Next we prove that \eqref{eq:LittlewoodPaleyStein1} holds 
under $p\in]1,2[$, $\alpha=0$ and $f\in D(\Delta)\cap L^p(X;\m)\cap \mathscr{B}^*(X)$. 
We note that $\Gamma(Q_{\cdot}^{(\alpha)}f)^{\frac12}\to \Gamma(Q_{\cdot}^{(0)}f)^{\frac12}$ in 
$L^2(X\times[0,\infty[:\m\otimes e^{-t}\d t)$ as $\alpha\to0$ for $f\in D(\mathscr{E})$. 
Indeed, 
\begin{align*}
\int_0^{\infty}e^{-t}&\d t\int_X\left|\Gamma(Q_t^{(\alpha)}f)^{\frac12}-\Gamma(Q_t^{(0)}f)^{\frac12} \right|^2\d\m\\
&\leq\int_0^{\infty}e^{-t}\d t\int_X\Gamma(Q_t^{(\alpha)}f-Q_t^{(0)}f)\d\m\\
&=\int_0^{\infty}e^{-t}\d t\int_0^{\infty}
\lambda\left( e^{-\sqrt{\alpha+\lambda}t}-e^{-\sqrt{\lambda}t}\right)^2\d(E_{\lambda}f,f)\\
&=\int_0^{\infty}\lambda\left[\frac{1}{2\sqrt{\alpha+\lambda}+1}-\frac{2}{\sqrt{\alpha+\lambda}+\sqrt{\lambda}+1}+\frac{1}{2\sqrt{\lambda}+1} \right]
\d(E_{\lambda}f,f)\\
&\to 0\quad \text{ as }\quad \alpha\downarrow0.
\end{align*}
Moreover, 
$\sqrt{\alpha-\Delta}Q_{\cdot}^{(\alpha)}f\to \sqrt{-\Delta}Q_{\cdot}^{(0)}f$ in $L^2(X\times[0,\infty[;\m\otimes e^{-t}\d t)$ as $\alpha\to0$ for $f\in D(\mathscr{E})$. Indeed, 
\begin{align*}
\int_0^{\infty}&e^{-t}\d t\int_X\left|\sqrt{\alpha-\Delta}Q_t^{(\alpha)}f-\sqrt{-\Delta}Q_t^{(0)}f\right|^2\d\m\\
&=\int_0^{\infty}e^{-t}\d t\int_0^{\infty}
\left|\sqrt{\alpha+\lambda}e^{-\sqrt{\alpha+\lambda}t}-\sqrt{\lambda}e^{-\sqrt{\lambda}t} \right|^2\d(E_{\lambda}f,f)\\
&=\int_0^{\infty}\left(\frac{\alpha+\lambda}{1+2\sqrt{\alpha+\lambda}}-\frac{2\sqrt{\alpha+\lambda}\sqrt{\lambda}}{1+\sqrt{\alpha+\lambda}+\sqrt{\lambda}}+\frac{\lambda}{1+2\sqrt{\lambda}} \right)\d(E_{\lambda}f,f)
&\to 0\quad \text{ as }\quad \alpha\downarrow0.
\end{align*}
Then there exists a subsequence $\{\alpha_k\}$ tending to $0$ as $k\to\infty$ such that $\Gamma(Q_t^{(\alpha_k)}f)(x)\to \Gamma(Q_t^{(0)}f)(x)$ and $\sqrt{\alpha_k-\Delta}Q_t^{(\alpha_k)}f(x) \to \sqrt{-\Delta}Q_t^{(0)}f(x)$ as $k\to\infty$ $\widehat{\m}$-a.e.~$(x,t)$.   

Therefore, we can conclude that \eqref{eq:LittlewoodPaleyStein1} holds 
under $p\in]1,2[$, $\alpha=0$ and $f\in D(\Delta^{\kappa^+})\cap L^p(X;\m)\cap \mathscr{B}^*(X)$ by way of Fatou's lemma and 
the estimate \eqref{eq:UpperG} under $\alpha>0$.

\subsection{Proof of \eqref{eq:LittlewoodPaleyStein1} for $f\in D(\Delta)\cap L^p(X;\m)\cap \mathscr{B}^*(X)$ under  $p\in]2,+\infty[$}\label{subsec:p>2}
In this subsection, we assume Assumption~\ref{asmp:Tamed}, consequently, 
the estimate \eqref{eq:gradCont} holds for $f\in D(\mathscr{E})$.  
We still assume $\kappa\in S_D({\bf X})$ and $\kappa^-\in S_{E\!K}({\bf X})$. 
In the case of $p>2$, we need additional functions, namely $H$-functions
\begin{align*}
{H_f^{\rightarrow}}^{\kappa}:&=\left\{\int_0^{\infty}tQ_t^{(\alpha),\kappa}(g_f^{\rightarrow}(\cdot,t)^2)(x)\d t \right\}^{\frac12},\\
{H_f^{\uparrow}}^{\kappa}:&=\left\{\int_0^{\infty}tQ_t^{(\alpha),\kappa}(g_f^{\uparrow}(\cdot,t)^2)(x)\d t \right\}^{\frac12},\\
{H_f}^{\kappa}:&=\left\{\int_0^{\infty}tQ_t^{(\alpha),\kappa}(g_f(\cdot,t)^2)(x)\d t \right\}^{\frac12}
\end{align*}
under $\alpha\geq C_{\kappa}$. 
We write ${H_f^{\rightarrow}}:={H_f^{\rightarrow}}^0$, 
${H_f^{\uparrow}}:={H_f^{\uparrow}}^0$ and ${H_f}:={H_f}^0$. 
Note that ${H_f^{\rightarrow}}^{\kappa}$, ${H_f^{\uparrow}}^{\kappa}$ and 
${H_f}^{\kappa}$ (resp.~${H_f^{\rightarrow}}$, ${H_f^{\uparrow}}$ and 
${H_f}$) depend on $\alpha\geq C_{\kappa}$ (resp.~$\alpha\geq0$ if $\kappa^-=0$). 

\medskip

Not only Lemma~\ref{lem:ShigekawaIdntity}, we need the following inequality extending  
\cite[Proposition~3.11]{ShigekawaText}. 
\begin{lem}\label{lem:ShigekawaInequality}
Assume $\kappa^+\in S_D({\bf X})$ and $\kappa^-\in S_{E\!K}({\bf X})$.  
Let $j:X\times[0,+\infty[\to[0,+\infty[$ be a measurable function. Then 
\begin{align}
\E_{\m\otimes\delta_a}\left[\left. \int_0^{\tau}j(\widehat{X}_s)\d s\,\right|\, X_{\tau}\right]
\geq \int_0^{\infty}(a\land t)
Q_t^{(\alpha),\kappa}(j(\cdot,t))(X_{\tau})
\d t\label{eq:ShigekawaProp3.11}
\end{align}
holds for $\alpha\geq C_{\kappa}$. Here $j$ is understood to be a function on $\wh{X}_{\wh{\partial}}$ with $j(\wh{\partial})=0$. 
\end{lem}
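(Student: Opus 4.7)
The plan is to extend Shigekawa's~\cite[Proposition~3.11]{ShigekawaText}, which gives the $\alpha=\kappa=0$ equality version of this inequality, by tracking the Feynman-Kac weight built into $Q_t^{(\alpha),\kappa}$. The argument has three moving parts that I would execute in order: a Markov/symmetry reduction on the left-hand side, a conditional strong Markov step for the Brownian factor $B$, and a final comparison to pass from the plain subordination semigroup $Q_t=Q_t^{(0),0}$ to $Q_t^{(\alpha),\kappa}$.

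First, I exploit the product structure $\widehat{\bf X}=(X_s,B_s)$ with $X$ and $B$ independent under $\P_{\m\otimes\delta_a}$, using $\mathcal{F}^B$-measurability of $\tau$. For bounded Borel $h\geq 0$, applying the Markov property of $X$ at time $s$ together with the $\m$-symmetry identity $\int f\,P_{\tau-s}h\,\d\m=\int h\,P_{\tau-s}f\,\d\m$ gives
\[
\E_{\m\otimes\delta_a}\!\left[\int_0^\tau j(\widehat{X}_s)\,\d s\cdot h(X_\tau)\right] \;\geq\; \int_X \m(\d y)\,h(y)\cdot \E_a^B\!\left[\int_0^\tau P_{\tau-s}(j(\cdot,B_s))(y)\,\d s\right],
\]
with equality in the conservative case (the inequality absorbs $\m P_s\leq\m$). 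Coupled with the matching sub-probability bound $\P_{\m\otimes\delta_a}(X_\tau\in\d y)\leq\m(\d y)$, this yields the corresponding pointwise lower bound for the conditional expectation on the left of the lemma.

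Second, the strong Markov property of $B$ at time $s$ identifies the conditional law of $\tau-s$ given $\{s<\tau,\,B_s=b\}$ as $\lambda_b$, so
\[
\E_a^B\!\left[P_{\tau-s}(j(\cdot,B_s))(y)\,\big|\,\mathcal{F}_s^B,\,s<\tau\right] = \int_0^\infty P_u(j(\cdot,B_s))(y)\,\lambda_{B_s}(\d u) = Q_{B_s}(j(\cdot,B_s))(y),
\]
with $Q_b:=Q_b^{(0),0}$. Lemma~\ref{lem:ShigekawaIdntity}, applied to $f(x,t):=Q_t(j(\cdot,t))(y)$, then collapses the Brownian expectation to $\int_0^\infty(a\wedge t)\,Q_t(j(\cdot,t))(y)\,\d t$. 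What remains is the pointwise comparison $Q_t(j(\cdot,t))(y)\geq Q_t^{(\alpha),\kappa}(j(\cdot,t))(y)$, equivalent to $\int_0^\infty\E_y[(1-e^{-\alpha u-A_u^\kappa})j(X_u,t)]\,\lambda_t(\d u)\geq 0$, which is immediate from $e^{-\alpha u-A_u^\kappa}\leq 1$ when $\kappa^-=0$ and $\alpha\geq 0$.

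The main obstacle is this last comparison in the genuinely signed case $\kappa^-\not\equiv 0$, where the pointwise bound $e^{-\alpha u-A_u^\kappa}\leq 1$ can fail. To sidestep it, I would carry the Feynman-Kac weight through the Markov/symmetry reduction of the first step from the outset, replacing $P_{\tau-s}$ by the Feynman-Kac semigroup $p_{\tau-s}^\kappa$ and inserting the factor $e^{-\alpha(\tau-s)}$, so that $Q_t^{(\alpha),\kappa}$ appears directly after applying the strong Markov of $B$ and Lemma~\ref{lem:ShigekawaIdntity}. The residual weight $e^{-\alpha s-A_s^\kappa}$ that would then multiply $j(\widehat{X}_s)$ on the left can be controlled via the hypothesis $\alpha\geq C_\kappa$ together with \eqref{eq:KatoContraction}, producing the desired inequality without relying on a pointwise weight comparison for signed $\kappa$.
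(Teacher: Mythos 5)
Your proposal correctly captures the Markov-plus-symmetry shape of the computation and correctly locates the crux: passing from the plain subordination $Q_t$ to $Q_t^{(\alpha),\kappa}$ when $\kappa^-\not\equiv0$. But the way you handle that crux does not work, and a secondary sign issue suggests the killing of ${\bf X}$ was not fully tracked.

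On the sign: for a fixed $B$-path and $0\leq s\leq\tau$, the Markov property of $X$ at time $s$ together with $\m$-symmetry gives
\begin{align*}
\int_X\m(\d x)\,\E_x\bigl[j(X_s,B_s)\,h(X_\tau)\bigr] = \int_X j(y,B_s)\,(P_{\tau-s}h)(y)\,(P_s\1)(y)\,\m(\d y),
\end{align*}
with the sub-Markovian factor $P_s\1\leq1$ appearing inside the integral. Dropping it yields $\E_{\m\otimes\delta_a}\bigl[\int_0^\tau j\,\d s\cdot h(X_\tau)\bigr]\leq\int_X\m(\d y)\,h(y)\,\E_a^{\rightarrow}\bigl[\int_0^\tau P_{\tau-s}(j(\cdot,B_s))(y)\,\d s\bigr]$: an \emph{upper} bound, not the lower bound you wrote. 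Your parenthetical ``absorbs $\m P_s\leq\m$'' is exactly the direction that cannot be absorbed. This is repairable if one treats Lemma~\ref{lem:ShigekawaIdntity} and \eqref{eq:invariant} as equalities, as the paper does, but as you stated it the deduction is wrong.

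The real gap is the fallback for signed $\kappa$. You propose to ``carry the Feynman--Kac weight through'' and control the residual $e^{-\alpha s-A_s^\kappa}$ using $\alpha\geq C_\kappa$ together with \eqref{eq:KatoContraction}. But \eqref{eq:KatoContraction} is the operator-norm bound $\|P_t^\kappa\|_{p,p}\leq Ce^{C_\kappa t}$; it gives no pointwise control on the random variable $e^{-\alpha s-A_s^\kappa}$, which can exceed $1$ with positive probability precisely because $\kappa^-\not\equiv0$. Nothing in your plan removes that weight from the unweighted left-hand side of \eqref{eq:ShigekawaProp3.11}, so the desired $\geq$ is never established. The paper's mechanism is genuinely different and is designed to avoid any such pointwise weight comparison. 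Setting $v(x,a)=q_{|a|}^{(\alpha),\kappa}f(x)$ for $f\in D(\Delta^\kappa)\cap\mathscr{B}_+^*(X)$, it shows $v\in D(\widehat{\mathscr E})$ (Lemma~\ref{lem:Domain}) and identifies, via Lemmas~\ref{lem:finiteEnergy}--\ref{lem:Potential} and Proposition~\ref{prop:semimartingale}, the Fukushima decomposition of $\tilde v(\widehat X_{t\land\tau})$ under the \emph{unperturbed} $\widehat{\bf X}$, whose zero-energy part on $[0,\tau]$ is $\alpha\int_0^{t\land\tau}\tilde v(\widehat X_s)\,\d s$. Thus $M_f(t):=\tilde v(\widehat X_{t\land\tau})-\alpha\int_0^{t\land\tau}\tilde v\,\d s$ is a uniformly integrable closed martingale with terminal value $f(X_\tau)-\alpha\int_0^\tau\tilde v\,\d s$, whence
\begin{align*}
\E_{(x,a)}\bigl[f(X_\tau)\mid\mathcal F_t\bigr] = \tilde v(\widehat X_{t\land\tau}) + \alpha\,\E_{(x,a)}\!\left[\left.\int_{t\land\tau}^{\tau}\tilde v(\widehat X_s)\,\d s\,\right|\,\mathcal F_t\right] \geq \tilde v(\widehat X_{t\land\tau}),
\end{align*}
the discarded term being nonnegative because $\alpha>0$ and $\tilde v\geq0$. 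This is precisely the q.e.\ pointwise comparison your step 3 needed, obtained from the positivity of the zero-energy part rather than from a bound on the Feynman--Kac weight. The rest of the proof is then the bilinear inequality \eqref{eq:Shigekawa2.46}, Lemma~\ref{lem:ShigekawaIdntity}, the $\m$-symmetry of $Q_t^{(\alpha),\kappa}$, and \eqref{eq:invariant}, followed by approximations to extend from $D(\Delta^\kappa)\cap\mathscr B_+^*(X)$ to $\mathscr B_+(X)$. This martingale machinery is the core of the proof and is entirely absent from your proposal.
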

\begin{remark}
{\rm Since $\alpha\geq C_{\kappa}>0$ under $\kappa^-\ne0$, we can not expect the equality in \eqref{eq:ShigekawaProp3.11} under $\alpha=0$.  
}
\end{remark}
\begin{proof}[\bf Proof of Lemma~{\boldmath\ref{lem:ShigekawaInequality}}]
We may assume $\alpha>C_{\kappa}$, because the case $\alpha=C_{\kappa}$ in 
\eqref{eq:ShigekawaProp3.11} 
can be deduced by the limit $\alpha\downarrow C_{\kappa}$.  By taking an increasing approximating sequence $\{j_n\}\subset L^1(X\times[0,+\infty[;\widehat{\m})$ of non-negative measurable functions to $j$, we may assume 
$j\in L^1(X\times[0,+\infty[;\widehat{\m})$. 
Thanks to the transience of $\widehat{\bf X}_{\widehat{G}}$ with the integrability of $j$, we have 
\begin{align}
\E_{(x,a)}\left[\int_0^{\tau}j(\widehat{X}_s)\d s \right]<\infty,\quad \widehat{\m}\text{-a.e.~}(x,a)\label{eq:finiteness}
\end{align} 
by \cite[(1.5.4)]{FOT}. 
To prove \eqref{eq:ShigekawaProp3.11}, it suffices to show 
\begin{align}
\E_{(x,a)}\left[f(X_{\tau})\int_0^{\tau}j(\widehat{X}_s)\d s  \right]\geq
\E_{(x,a)}\left[\int_0^{\tau} 
\tilde{v}(\widehat{X}_t)j(\widehat{X}_t)\d t \right]\quad \widehat{\m}\text{-a.e.~}(x,a).
\label{eq:Shigekawa2.46}
\end{align}
for $v(x,a):=q_{|a|}^{(\alpha),\kappa}f(x)$ with 
$f\in \mathscr{B}_+(X)$ and $\alpha>C_{\kappa}$.
Indeed, by taking an integration of \eqref{eq:Shigekawa2.46} with respect to $\m$, we have
\begin{align*}
\E_{\m\otimes\delta_a}\left[f(X_{\tau})\int_0^{\tau}j(\widehat{X}_s)\d s  \right]&\geq
\E_{\m\otimes\delta_a}\left[\int_0^{\tau}
\tilde{v}(\widehat{X}_t)j(\widehat{X}_t)\d t \right]
\\
&\hspace{-0.2cm}\stackrel{\eqref{eq:Shigekawa3.10}}{=}\int_X\m(\d x)\int_0^{\infty}(a\land t)v(x,t)j(x,t)\d t\\
&=\int_0^{\infty}(a\land t)\d t\int_X
Q_t^{(\alpha),\kappa}f(x)
j(x,t)\m(\d x)\\
&=\int_0^{\infty}(a\land t)\d t\int_X Q_t^{(\alpha),\kappa}(j(\cdot,t))(x)f(x)\m(\d x)
\quad\text{(symmetry)}\\
&=\int_0^{\infty}(a\land t)\E_{\m\otimes\delta_a}\left[Q_t^{(\alpha),\kappa}
(j(\cdot,t))(X_{\tau})
f(X_{\tau}) \right]\d t,
\end{align*}
which implies \eqref{eq:ShigekawaProp3.11}.  
Here we apply \eqref{eq:invariant} in the last equality.

From now on, we prove  \eqref{eq:Shigekawa2.46} for $f\in D(\Delta^{\kappa})\cap\mathscr{B}^*_+(X)$. 
Fix such an $f$ and set $v(x,a):=u(x,|a|):=q_{|a|}^{(\alpha),\kappa}f(x)$ with $\alpha>C_{\kappa}$. 
By Lemma~\ref{lem:Domain}, $v\in D(\widehat{\mathscr{E}}^{\,\wh{\kappa}})=D(\widehat{\mathscr{E}})$ for $\alpha>C_{\kappa}$.  As proved in Proposition~\ref{prop:semimartingale}, we have 
the semi-martingale decomposition \eqref{eq:semimartingale1} with \eqref{eq:semimartingale2} and \eqref{eq:semimartingale3}.  We set 
\begin{align*}
M_f(t):&=\wh{M}_{t\land\tau}^{[v]}+\tilde{v}(\widehat{X}_0)\\
&=\tilde{v}(\widehat{X}_{t\land\tau})-\alpha\int_0^{t\land\tau}\tilde{v}(\widehat{X}_s)\d s,\quad t\geq0\quad\P_{\hat{x}}\text{-a.s.~for q.e.~}\hat{x}.
\end{align*}
Then by \eqref{eq:semimartingale3}, $M_f(t)$ is a uniformly integrable  
martingale with respect to $\P_{(x,a)}$ for q.e.~$(x,a)$. 
As $t\to\infty$, $M_f(t)$ converges to 
\begin{align*}
\tilde{v}(\widehat{X}_{\tau})-\alpha\int_0^{\tau}\tilde{v}(\widehat{X}_s)\d s=f(X_{\tau})-\alpha\int_0^{\tau}\tilde{v}(\widehat{X}_s)\d s.
\end{align*}
As a consequence, $M_f(t)$ is represented by 
\begin{align}
M_f(t)=\E_{(x,a)}\left[\left. f(X_{\tau})-\alpha\int_0^{\tau}\tilde{v}(\widehat{X}_s)\d s \,\right|\,\mathcal{F}_t\right],\quad \P_{(x,a)}\text{-a.s.~for q.e.~}(x,a).\label{eq:closedmartingale}
\end{align}
Thus 
\begin{align*}
\E_{(x,a)}\left[f(X_{\tau})\int_0^{\tau}j(\widehat{X}_s)\d s  \right]&=
\int_0^{\infty}\E_{(x,a)}\left[j(\widehat{X}_t) f(X_{\tau})\1_{\{t\leq\tau\}} \right]\d t\\
&=\int_0^{\infty}\E_{(x,a)}\left[\E_{(x,a)}[f(X_{\tau})\,|\,\mathcal{F}_t]j(\widehat{X}_t)\1_{\{t\leq\tau\}} \right]\d t\\
&\hspace{-0.2cm}\stackrel{\eqref{eq:closedmartingale}}{=}\int_0^{\infty}\E_{(x,a)}\left[\tilde{v}(\widehat{X}_t)j(\widehat{X}_t)\1_{\{t\leq\tau\}} \right]\d t\\
&\hspace{1cm}+\alpha\int_0^{\infty}\E_{(x,a)}\left[\E_{(x,a)}\left[\left.\int_{t\land\tau}^{\tau} \tilde{v}(\widehat{X}_s)\d s\,\right|\,\mathcal{F}_t \right]j(\widehat{X}_t)\1_{\{t\leq\tau\}} \right]\d t\\
&\geq \int_0^{\infty}\E_{(x,a)}\left[\tilde{v}(\widehat{X}_t)j(\widehat{X}_t)\1_{\{t\leq\tau\}} \right]\d t\\
&=\E_{(x,a)}\left[\int_0^{\tau}\tilde{v}(\widehat{X}_t)j(\widehat{X}_t)\d t \right].
\end{align*}
This shows that \eqref{eq:Shigekawa2.46} holds for $f\in D(\Delta^{\kappa})\cap\mathscr{B}_+^*(X)$ with 
$\alpha>C_{\kappa}$. 

Next we prove that \eqref{eq:Shigekawa2.46} holds $f\in L^2(X;\m)\cap \mathscr{B}_b(X)_+$ with 
$\alpha>C_{\kappa}$. For such an $f$, we set 
$f_n:=p_{1/n}^{\kappa}f$. Then $\{f_n\}$ is uniformly bounded and $f_n\in D(\Delta^{\kappa})\cap 
\mathscr{B}^*_b(X)_+$. We already prove that  \eqref{eq:Shigekawa2.46} holds for $f_n$.  
Letting $n\to\infty$ with \eqref{eq:finiteness}, 
we can conclude that \eqref{eq:Shigekawa2.46} holds for $f\in L^2(X;\m)\cap \mathscr{B}_b(X)_+$ by way of Lebesgue's dominated convergence theorem. Finally, approximating $f\in \mathscr{B}(X)_+$ by an increasing sequence in $L^2(X;\m)\cap \mathscr{B}_b(X)_+$, 
we see that  \eqref{eq:Shigekawa2.46} still holds for any $f\in \mathscr{B}(X)_+$ with $\alpha>C_{\kappa}$.
\end{proof}

We begin by the following proposition: 

\begin{prop}\label{prop:ShigekawaYoshida}
For $p>2$, the following inequality holds for any $f\in D(\Delta)\cap L^p(X;\m)\cap\mathscr{B}^*(X)$ with $\alpha\geq C_{\kappa}$ and $\alpha>0$: 
\begin{align*}
\|{H_f}^{\kappa}\|_{L^p(X;\m)}\lesssim \|f\|_{L^p(X;\m)}.
\end{align*}
\end{prop}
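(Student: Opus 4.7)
The plan is to prove $\|H_f^{\kappa}\|_{L^p}\lesssim\|f\|_{L^p}$ by $L^{p/2}$--$L^{(p/2)'}$ duality, reducing the dual pairing to a probabilistic expectation via Lemma~\ref{lem:ShigekawaInequality}, and then attacking the resulting expectation with the semi-martingale decomposition of Proposition~\ref{prop:semimartingale} together with Doob's maximal inequality and the Burkholder--Davis--Gundy inequality. Set $q:=(p/2)'=p/(p-2)$. By duality,
\begin{equation*}
\|H_f^{\kappa}\|_{L^p(X;\m)}^2=\sup\Bigl\{\textstyle\int_X(H_f^{\kappa})^2 h\,\d\m:h\in L^q(X;\m),\,h\geq0,\,\|h\|_{L^q}\leq 1\Bigr\},
\end{equation*}
and by Lemma~\ref{lem:DensenessTestFunc} one may restrict to $h\in D(\Delta^\kappa)\cap\mathscr{B}^*(X)_+$. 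Fubini together with the $\m$-symmetry of $Q_t^{(\alpha),\kappa}$ gives $\int_X(H_f^{\kappa})^2 h\,\d\m=\int_0^\infty t\int_X g_f(\cdot,t)^2\,Q_t^{(\alpha),\kappa}h\,\d\m\,\d t$. Applying Lemma~\ref{lem:ShigekawaInequality} with $j=g_f^2$, multiplying by $h(X_\tau)\geq 0$, taking $\P_{\m\otimes\delta_a}$-expectation, using the invariance \eqref{eq:invariant} and Fubini, and letting $a\to\infty$ by monotone convergence yields
\begin{equation*}
\textstyle\int_X(H_f^{\kappa})^2 h\,\d\m\leq\liminf_{a\to\infty}\E_{\m\otimes\delta_a}\Bigl[h(X_\tau)\int_0^\tau g_f(\widehat X_s)^2\,\d s\Bigr].
\end{equation*}

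The next step is to estimate the probabilistic right-hand side. Setting $v(x,a):=q_{|a|}^{(\alpha)}f(x)$ (so $\kappa=0$ in the $v$-construction) and $V_s:=\widetilde v(\widehat X_{s\wedge\tau})$, Proposition~\ref{prop:semimartingale} produces the martingale $M:=\widehat M^{[v]}$ with $\d\langle M\rangle_s=2g_f(\widehat X_s)^2\,\d s$ on $\{s<\tau\}$. Introduce the Doob martingale $N_s:=\E_{\m\otimes\delta_a}[h(X_\tau)\mid\mathcal F_s]$. Using the tower property and the $\mathcal F_s$-measurability of the integrand,
\begin{equation*}
\textstyle\E_{\m\otimes\delta_a}\!\bigl[h(X_\tau)\int_0^\tau g_f^2\,\d s\bigr]=\tfrac12\E\!\bigl[\int_0^\tau N_s\,\d\langle M\rangle_s\bigr]\leq\tfrac12\,\E[N^*\,\langle M\rangle_\tau],
\end{equation*}
where $N^*:=\sup_{s\leq\tau}|N_s|$. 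Hölder's inequality with exponents $q$ and $p/2$, Doob's $L^q$-maximal inequality (which together with \eqref{eq:invariant} gives $\|N^*\|_{L^q(\P_{\m\otimes\delta_a})}\leq\frac{q}{q-1}\|h\|_{L^q(X;\m)}$), and the Burkholder--Davis--Gundy inequality for the continuous martingale $M$ (giving $\|\langle M\rangle_\tau^{1/2}\|_{L^p}\leq C_p\|M^*\|_{L^p}$) then reduce the proof to the uniform-in-$a$ estimate $\|M^*\|_{L^p(\P_{\m\otimes\delta_a})}\lesssim\|f\|_{L^p(X;\m)}$.

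The main obstacle will be this last uniform bound on $\|M^*\|_{L^p}$. Since $v$ is $\alpha$-harmonic for $\widehat\Delta$, Itô's formula gives $\widehat\Delta(v^2)=2\alpha v^2+2g_f^2\geq 0$ on $\{a>0\}$, so $V_t^2$ and hence $|V_t|^p$ are non-negative submartingales on $[0,\tau]$; Doob's submartingale inequality then yields $\|V^*\|_{L^p(\P_{\m\otimes\delta_a})}\leq\frac{p}{p-1}\|V_\tau\|_{L^p}=\frac{p}{p-1}\|f\|_{L^p}$ uniformly in $a$. From the decomposition $M_t=V_t-V_0-\alpha\int_0^t V_s\,\d s$, the $\|V_0\|_{L^p}$ contribution vanishes as $a\to\infty$ thanks to the exponential decay \eqref{eq:Contra}, but the drift integral $\alpha\bigl\|\int_0^\tau V_s\,\d s\bigr\|_{L^p}$ cannot be controlled by a naive Minkowski estimate because $\tau$ has no $L^p$-moments. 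To circumvent this, I plan to work with the exponentially weighted martingale $\overline M_t:=\int_0^t e^{-\alpha s}\d M_s=e^{-\alpha t}V_t-V_0$, whose Doob--BDG $L^p$-norm is controlled directly by $\|e^{-\alpha\tau}f(X_\tau)\|_{L^p}\leq\|f\|_{L^p}$, and then to recover $\|\langle M\rangle_\tau^{1/2}\|_{L^p}$ from $\|\langle\overline M\rangle_\tau^{1/2}\|_{L^p}$ via a Hardy-type interpolation exploiting the pointwise control $|V_s|\leq\sup_{t\geq0}P_t|f|(X_s)$ from the contractivity of the subordination combined with the maximal ergodic inequality \cite[Theorem~3.3]{ShigekawaText}. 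This last step is where the non-constant curvature lower bound $\kappa$ forces a genuine modification of the Kawabi--Miyokawa argument, whose original setting has a constant lower bound absorbed into a simple exponential factor.
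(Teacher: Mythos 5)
Your overall architecture — reduce $\|H_f^\kappa\|_{L^p}$ via Lemma~\ref{lem:ShigekawaInequality} to the probabilistic expectation $\E_{\m\otimes\delta_a}[h(X_\tau)\int_0^\tau g_f^2\,\d s]$ (or, in the paper's version, $\E_{\m\otimes\delta_a}[(\int_0^\tau g_f^2\,\d s)^{p/2}]$) and then estimate that via Proposition~\ref{prop:semimartingale} and martingale inequalities — is correct in spirit and is what the paper does. The duality with $h\in L^q$ and the Doob martingale $N_s=\E[h(X_\tau)\mid\mathcal F_s]$ is a legitimate classical alternative to the paper's choice of Jensen's conditional inequality; either way one is left with the task of bounding $\|(\int_0^\tau g_f^2\,\d s)^{1/2}\|_{L^p(\P_{\m\otimes\delta_a})}$ uniformly in $a$.

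The genuine gap is in how you propose to do that last step. You reach for the Burkholder--Davis--Gundy inequality applied to the martingale $M=\widehat M^{[v]}$, which converts the problem into controlling $\|M^*\|_{L^p(\P_{\m\otimes\delta_a})}$ uniformly in $a$. That target is not attainable along the route you describe: $M_\tau=f(X_\tau)-V_0-\alpha\int_0^\tau V_s\,\d s$, and the drift term has no usable $L^p$ bound because $\tau$ (the exit time of one-dimensional Brownian motion from the half-line) has no finite moments of any positive order. Your proposed repair via the discounted martingale $\overline M_t=\int_0^te^{-\alpha s}\,\d M_s$ does give a uniform bound on $\|\overline M^*\|_{L^p}$, but it changes the quadratic variation to $\langle\overline M\rangle_\tau=\int_0^\tau e^{-2\alpha s}g_f(\widehat X_s)^2\,\d s\leq\langle M\rangle_\tau$; this inequality points the wrong way, and the ``Hardy-type interpolation'' you invoke to reverse it is not a concrete argument. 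In fact the pointwise bound $|V_s|\leq\sup_{t\geq0}P_t|f|(X_s)$ gives no handle on $e^{2\alpha s}$.

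The paper avoids this entirely by choosing a different decomposition: instead of $M$ it works with the \emph{submartingale} $Z_t:=V_{t\wedge\tau}^2-V_0^2$, whose Doob--Meyer increasing part is $A_t=2\int_0^{t\wedge\tau}(\alpha V_s^2+g_f(\widehat X_s)^2)\,\d s$ --- note this absorbs the problematic drift contribution into the increasing process rather than fighting it on the martingale side. Then the Lenglart--L\'epingle--Pratelli inequality \eqref{eq:LLP} bounds $\E[A_\infty^{p/2}]$ directly by $\E[\sup_t|Z_t|^{p/2}]$, and Doob applied to the non-negative submartingale $V_{t\wedge\tau}^2$ (using $\widehat\Delta(v^2)=2\alpha v^2+2g_f^2\geq0$, which you correctly observed) closes the estimate since $\E[A_\infty^{p/2}]\geq2^{p/2}\E[(\int_0^\tau g_f^2\,\d s)^{p/2}]$. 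The moral is that one must not separate the drift from the quadratic-variation contribution; LLP lets you estimate the full increasing part of the submartingale $V^2-V_0^2$ in one shot. If you replace your BDG-on-$M$ step by the LLP step on $Z$, your duality framing of the rest of the argument works and you would have a complete proof.
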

\begin{proof}[\bf Proof]  
By a slight modification, we can prove in the same way as the proof of Shigekawa-Yoshida~\cite[Proposition~4.2]{ShigekawaYoshida}. However we give the proof for readers' convenience.

Let us recall that $v(x,a):=q_{|a|}^{(\alpha)}f(x)$ with $f\in D(\Delta)$ satisfies $v\in D(\widehat{\mathscr{E}})$ under $\alpha>0$, 
due to \eqref{eq:VSquare}, we have 
\begin{align}
V_{t\land\tau}^2-V_0^2=2\int_0^{t\land\tau}V_s\d M_s+2\int_0^{t\land\tau}
\left(\alpha V_s^2+g_f(\widehat{X}_s)^2 \right)\d s.\label{eq:VSquareStochas}
\end{align}
Since $A_t:=2\int_0^{t\land\tau}
\left(\alpha V_s^2+g_f(\widehat{X}_s)^2 \right)\d s$, $t\geq0$, is a continuous increasing process, 
\eqref{eq:VSquareStochas} implies that $Z_t:=V_{t\land\tau}^2-V_0^2$, $t\geq0$, is a submartingale. 

Now we need an inequality for submartingale. Let $\{Z_t\}_{t\geq0}$ be a continuous submartingale with the Doob-Meyer decomposition 
$Z_t=M_t+A_t$, where $\{M_t\}_{t\geq0}$ is a continuous martingale and $\{A_t\}_{t\geq0}$ is a continuous increasing process with $A_0=0$. Due to Lenglart-L\'epingle-Pratelli~\cite{LenglartLepinglePratelli}, it holds that 
\begin{align}
\E[A_{\infty}^p]\leq(2p)^p\E\left[\sup_{t\geq0}|Z_t|^p \right], \quad p>1.\label{eq:LLP}
\end{align} 
Then by using \eqref{eq:LLP} and Doob's inequality, we have 
\begin{align}
\E_{\m\otimes\delta_{a_n}}\left[\left\{2\int_0^{\tau}\left(\alpha V_s^2+g_f(\widehat{X}_s)^2 \right)\d s \right\}^{\frac{p}{2}} \right]
&\lesssim\E_{\m\otimes\delta_{a_n}}\left[\sup_{t\geq0}|V_{t\land\tau}^2-V_0^2|^{\frac{p}{2}} \right]\notag\\
&\lesssim\E_{\m\otimes\delta_{a_n}}\left[|V_{\tau}^2-V_0^2|^{\frac{p}{2}} \right]\notag\\
&=\E_{\m\otimes\delta_{a_n}}\left[|{v}(\widehat{X}_{\tau})^2-{v}(\widehat{X}_0)^2|^{\frac{p}{2}} \right]\label{eq:Doob}\\
&=\E_{\m\otimes\delta_{a_n}}
\left[|(Q_0^{(\alpha)}
f(X_{\tau}))^2-
(Q_{a_n}^{(\alpha)}f(X_0))^2|^{\frac{p}{2}}
 \right]\notag\\
&\lesssim \E_{\m\otimes\delta_{a_n}}
\left[|Q_0^{(\alpha)}f(X_{\tau})|^p \right]+
\E_{\m\otimes\delta_{a_n}}\left[|Q_{a_n}^{(\alpha)}f(X_0)|^p \right]\notag\\
&\hspace{-0.2cm}\stackrel{\eqref{eq:invariant}}{=}
\|f\|_{L^p(X;\m)}^p+\|Q_{a_n}^{(\alpha)}f\|_{L^p(X;\m)}^p
\lesssim\|f\|_{L^p(X;\m)}^p.\notag
\end{align}
On the other hand, by using \eqref{eq:ShigekawaProp3.11}, \eqref{eq:Doob} and Jensen's inequality, we have 
\begin{align*}
\|{H_f}^{\!\!\kappa}\|_{L^p(X;\m)}^p&=\left\|\left\{\int_0^{\infty}t Q_t^{(\alpha),\kappa}(g_f(\cdot,t)^2)\d t 
\right\}^{\frac{p}{2}} \right\|_{L^1(X;\m)}\\
&=\lim_{n\to\infty}\left\|
\left\{\int_0^{\infty}(a_n\land t) Q_t^{(\alpha),\kappa}(g_f(\cdot,t)^2)\d t 
\right\}^{\frac{p}{2}}
 \right\|_{L^1(X;\m)}\\
&\hspace{-0.2cm}\stackrel{\eqref{eq:invariant}}{=}
\lim_{n\to\infty}\E_{\m\otimes\delta_{a_n}}\left[\left\{\int_0^{\infty}(a_n\land t)Q_t^{(\alpha),\kappa}(g_f(\cdot,t)^2)(X_{\tau})\d t \right\}^{\frac{p}{2}} \right] \notag\\
 &\hspace{-0.2cm}\stackrel{\eqref{eq:ShigekawaProp3.11}}{\leq}
 \varliminf_{n\to\infty}
 \E_{\m\otimes\delta_{a_n}}
 \left[\E_{\m\otimes\delta_{a_n}}
 \left[\left.
\int_0^{\tau}g_f(\widehat{X}_s)^2\d s
\,\right|\,X_{\tau}
\right]^{\frac{p}{2}}
\right]\\
&\leq\varliminf_{n\to\infty}\E_{\m\otimes\delta_{a_n}}\left[\E_{\m\otimes\delta_{a_n}}
\left[\left.\left(\int_0^{\tau}g_f(\widehat{X}_s)^2\d s \right)^{\frac{p}{2}} \,\right|\, X_{\tau} 
\right] \right]\\
&=\varliminf_{n\to\infty}\E_{\m\otimes\delta_{a_n}}\left[
\left(\int_0^{\tau}g_f(\widehat{X}_s)^2\d s \right)^{\frac{p}{2}}\right]\\
&\leq\varliminf_{n\to\infty}\E_{\m\otimes\delta_{a_n}}\left[
\left\{\int_0^{\tau}\left(\alpha V_s^2+g_f(\widehat{X}_s)^2 \right)\d s
 \right\}^{\frac{p}{2}}
\right]\\
&\hspace{-0.2cm}\stackrel{\eqref{eq:Doob}}{\lesssim} \|f\|_{L^p(X;\m)}^p.
\end{align*}
In applying \eqref{eq:ShigekawaProp3.11}, we use $\alpha\geq C_{\kappa}$. 
This completes the proof. 
\end{proof} 

Next we study the relationship between $G$-functions and $H$-functions. In the proof of this proposition, Assumption~\ref{asmp:Tamed} plays a key role. 

\begin{prop}\label{prop:GHEest}
\begin{enumerate}
\item[{\rm (1)}] For any $f\in D(\mathscr{E})$ and $\alpha\geq C_{\kappa}$, the following inequality 
holds: 
\begin{align*}
G_f^{\uparrow}\leq 2\sqrt{C}{H_f^{\uparrow}}^{\kappa},\quad\m\text{-a.e.}
\end{align*} 
\item[{\rm (2)}] For any $f\in D(\mathscr{E})$ and $\alpha\geq0$, the following inequality 
holds: 
\begin{align*}
G_f^{\rightarrow}\leq 2{H_f^{\rightarrow}}.
\end{align*} 
\end{enumerate}
\end{prop}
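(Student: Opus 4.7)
The plan is to exploit the semigroup property $Q_t^{(\alpha)} = Q_{t/2}^{(\alpha)} \circ Q_{t/2}^{(\alpha)}$ to ``split off'' a factor of $Q_{t/2}^{(\alpha)}$, apply a pointwise contraction estimate (Cauchy--Schwarz for part~(2), and the tamed gradient contractivity \eqref{eq:gradCont} combined with Cauchy--Schwarz for part~(1)), and then produce the desired $H$-function by a simple change of variable $s = t/2$ in the outer $t$-integral.

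For part~(2), since the generator $-\sqrt{\alpha - \Delta}$ of $(Q_t^{(\alpha)})_{t\ge 0}$ commutes with $Q_s^{(\alpha)}$, we have
\[
\partial_t Q_t^{(\alpha)} f\big|_{t=2s} = -\sqrt{\alpha-\Delta}\,Q_{2s}^{(\alpha)}f = Q_s^{(\alpha)}\bigl(\partial_s Q_s^{(\alpha)} f\bigr).
\]
The kernel $Q_s^{(\alpha)}$ is positive with $Q_s^{(\alpha)}1 \leq \int_0^\infty e^{-\alpha r}\lambda_s(dr) = e^{-\sqrt{\alpha}s}\le 1$, so the Cauchy--Schwarz inequality for positive kernels gives $g_f^{\rightarrow}(x,2s)^2 \leq Q_s^{(\alpha)}\bigl(g_f^{\rightarrow}(\cdot,s)^2\bigr)(x)$. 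Substituting $t=2s$ in $G_f^{\rightarrow}(x)^2 = \int_0^\infty t\, g_f^{\rightarrow}(x,t)^2\,dt$ yields a factor $4$, so $G_f^{\rightarrow}(x)^2 \le 4 H_f^{\rightarrow}(x)^2$, proving~(2).

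For part~(1), set $h := Q_{t/2}^{(\alpha)} f \in D(\mathscr{E})$ and estimate $\Gamma(Q_{t/2}^{(\alpha)} h)^{1/2}$ through two ingredients. First, Minkowski's integral inequality for the seminorm $\Gamma(\cdot)^{1/2}$, applied to the finite measure $e^{-\alpha s}\lambda_{t/2}(ds)$, gives
\[
\Gamma\bigl(Q_{t/2}^{(\alpha)} h\bigr)^{1/2} \leq \int_0^\infty e^{-\alpha s}\,\Gamma(P_s h)^{1/2}\,\lambda_{t/2}(ds).
\]
Second, the tamed gradient contractivity $\Gamma(P_s h)^{1/2} \leq P_s^{\kappa}\Gamma(h)^{1/2}$, which is in force by Assumption~\ref{asmp:Tamed} via \eqref{eq:gradCont}, identifies the right-hand side with $Q_{t/2}^{(\alpha),\kappa}\Gamma(h)^{1/2}$. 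Squaring and using Cauchy--Schwarz for the positive kernel $Q_{t/2}^{(\alpha),\kappa}$, together with $Q_{t/2}^{(\alpha),\kappa} 1 \le C$ (which follows from \eqref{eq:Contra} at $p=\infty$), yields
\[
\Gamma(Q_t^{(\alpha)} f) \leq C\, Q_{t/2}^{(\alpha),\kappa}\!\bigl(\Gamma(Q_{t/2}^{(\alpha)} f)\bigr) = C\, Q_{t/2}^{(\alpha),\kappa}\!\bigl(g_f^{\uparrow}(\cdot,t/2)^2\bigr).
\]
Multiplying by $t$, integrating in $t$, and substituting $s=t/2$ gives the factor $4C$, hence $G_f^{\uparrow}\leq 2\sqrt{C}\,{H_f^{\uparrow}}^{\kappa}$.

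The main obstacle is the rigorous justification of the Minkowski step for $\Gamma(\cdot)^{1/2}$ inside an integral against the subordinating measure, because $\Gamma(\cdot)^{1/2}$ is only defined $\m$-a.e.~and is nonlinear. The cleanest route is to unfold it via the bilinear identity $\Gamma\bigl(\int u(s)\mu(ds)\bigr) = \iint \Gamma(u(s),u(r))\,\mu(ds)\mu(dr)$ and estimate the integrand by the pointwise Cauchy--Schwarz bound $\Gamma(u(s),u(r)) \leq \Gamma(u(s))^{1/2}\Gamma(u(r))^{1/2}$; for $h \in D(\mathscr{E})$ the map $s \mapsto e^{-\alpha s}P_s h$ is strongly continuous into $D(\mathscr{E})$, so this identity can be obtained from a Riemann-sum approximation and the triangle inequality for $\Gamma(\cdot)^{1/2}$. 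Measurability of $(x,s)\mapsto g_f^{\uparrow}(x,s)^2$ and $g_f^{\rightarrow}(x,s)^2$, needed to make sense of $Q_s^{(\alpha),\kappa}$ applied to the $s$-slice, is a routine consequence of the strong continuity of the semigroups in $s$.
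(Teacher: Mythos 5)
Your proof is correct and follows essentially the same approach as the paper: the semigroup splitting $Q_t^{(\alpha)}=Q_{t/2}^{(\alpha)}\circ Q_{t/2}^{(\alpha)}$, the Minkowski/triangle inequality for $\Gamma(\cdot)^{1/2}$ combined with the gradient contractivity \eqref{eq:gradCont}, Cauchy--Schwarz for the positive kernel with the bound $Q_{t/2}^{(\alpha),\kappa}1\le C$, and the change of variable producing the factor $4$. The only presentational difference is that the paper establishes the Minkowski-type inequality \eqref{eq:Bochner} via an explicit step-function approximation rather than invoking the bilinear identity, and it upgrades the $\m$-a.e.\ pointwise bound to all $t$ via $L^1$-continuity in $t$ rather than appealing to joint measurability and Fubini, but both routes are sound.
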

\begin{proof}[\bf Proof]  
First we show (1). 
For this, we prove 
\begin{align}
\Gamma(Q_t^{(\alpha)}f)^{\frac12}\leq\int_0^{\infty}e^{-\alpha s}\Gamma(P_sf)^{\frac12}\lambda_t(\d s),\quad \m\text{-a.e.}\label{eq:Bochner}
\end{align}
We set a $D(\mathscr{E})$-valued simple function $f_n(s)$ by 
\begin{align*}
f_n(s):=\sum_{k=0}^{2^n\cdot n-1}\1_{[\frac{k}{2^n},\frac{k+1}{2^n}[}(s)P_{\frac{k}{2^n}}f+\1_{[n,\infty[}(s)P_nf.
\end{align*}
Then for each $s\geq0$, $\{f_n(s)\}$ converges to $P_sf$ in $(\mathscr{E},D(\mathscr{E}))$ as $n\to\infty$, consequently 
$\Gamma(f_n(s))^{\frac12}$ converges to $\Gamma(P_sf)^{\frac12}$ in $L^2(X;\m)$ as $n\to\infty$. 
Similarly, setting $Q_t f_n:=\int_0^{\infty}e^{-\alpha s}f_n(s)\lambda_t(\d s)$, 
$\{Q_tf_n\}$  converges to $Q_t^{(\alpha)}f$ in 
 $(\mathscr{E},D(\mathscr{E}))$ as $n\to\infty$, and $\Gamma(Q_t f_n)^{\frac12}$ converges to $\Gamma(Q_t^{(\alpha)}f)^{\frac12}$ in $L^2(X;\m)$ as $n\to\infty$. Since 
\begin{align*}
\Gamma(f_n(s))^{\frac12}=\left\{\begin{array}{cl}\Gamma(P_{\frac{k}{2^n}}f)^{\frac12}, & \quad\text{ if }\quad  s\in[\frac{k}{2^n},\frac{k+1}{2^n}[,\quad k\in\{0,1,\cdots, 2^n\cdot n-1\},\\  \Gamma(P_nf)^{\frac12}, & \quad\text{ if }\quad s\geq n,\end{array}\right.
\end{align*} 
we can see 
\begin{align*}
\Gamma(Q_tf_n)^{\frac12}\leq \int_0^{\infty}e^{-\alpha s}\Gamma(f_n(s))^{\frac12}\lambda_t(\d s).
\end{align*} 
Letting $n\to\infty$, we obtain \eqref{eq:Bochner}.  
By \eqref{eq:Bochner}, the upper estimate \eqref{eq:gradCont} and Schwarz's inequality, we have the following estimate for any $\alpha\geq C_{\kappa}$ and $f\in D(\mathscr{E})$:
\begin{align}
\Gamma(Q_t^{(\alpha)}f)^{\frac12}&\leq\int_0^{\infty}e^{-\alpha s}\Gamma(P_sf)^{\frac12}\lambda_t(\d s)\notag\\
&\leq\int_0^{\infty}e^{-\alpha s}P_s^{\kappa}\Gamma(f)^{\frac12}\lambda_t(\d s)\label{eq:BEQ1}\\
&=Q_t^{(\alpha),\kappa}\Gamma(f)^{\frac12}=q_t^{(\alpha),\kappa}\Gamma^*(f)^{\frac12},\notag
\end{align}
where $\Gamma^*(f)$ is a Borel $\m$-version of $\Gamma(f)$. 
Then \eqref{eq:BEQ1} yields that for each $t\geq0$
\begin{align}
g_f^{\uparrow}(x,2t)^2&=\Gamma(Q_{2t}^{(\alpha)}f)(x)\notag\\
&=\Gamma\left(Q_t^{(\alpha)}(Q_t^{(\alpha)}f)\right)(x)\notag\\
&\leq \left(q_t^{(\alpha),\kappa}\Gamma^*(Q_t^{(\alpha)}f)^{\frac12}(x)\right)^2 \notag\\
&\leq q_t^{(\alpha),\kappa}1(x) \cdot Q_t^{(\alpha),\kappa}\Gamma(Q_t^{(\alpha)}f)(x)\label{eq:BEQ2}\\
&= \left(\int_0^{\infty}e^{-\alpha s}p_s^{\kappa}1(x)\lambda_t(\d s) \right)
Q_t^{(\alpha),\kappa}(g_f^{\uparrow}(\cdot,t)^2)(x)
\notag\\
&\hspace{-0.2cm}\stackrel{\eqref{eq:Contra}}{\leq} Ce^{-\sqrt{\alpha-C_{\kappa}}t}Q_t^{(\alpha),\kappa}(g_f^{\uparrow}(\cdot,t)^2)(x),\quad \m\text{-a.e.~}x\in X.\notag
\end{align}
Since $t\mapsto Q_t^{(\alpha),\kappa}f$ is a $D(\mathscr{E})$-valued continuous function, 
$t\mapsto g_f^{\uparrow}(x,2t)^2=\Gamma(Q_{2t}^{(\alpha)}f)$ is an $L^1(X;\m)$-valued continuous function, moreover, 
$t\mapsto Q_t^{(\alpha),\kappa}(g_f^{\uparrow}(\cdot,2t)^2)$ is an $L^1(X;\m)$-valued continuous function. 
Thus, $g_f^{\uparrow}(x,2t)^2\leq  Q_t^{(\alpha),\kappa}(g_f^{\uparrow}(\cdot,t)^2)(x)$ for all $t\geq0$ $\m$-a.e.~$x\in X$.

Therefore we have 
\begin{align*}
\left(G_f^{\uparrow}(x)\right)^2&=4\int_0^{\infty}tg_f^{\uparrow}(x,2t)^2\d t\\
&\leq 4C\int_0^{\infty}t Q_t^{(\alpha),\kappa}(g_f^{\uparrow}(\cdot,t)^2)(x)\d t=
4C({H_f^{\uparrow}}^{\kappa}(x))^2,\quad \m\text{-a.e.~}x\in X,
\end{align*}
where we changed the variable $t$ in $2t$ in the first line and used \eqref{eq:BEQ2} for the second line. 
Next we show (2). Using the semigroup property of $(Q_t^{(\alpha)})_{t\geq0}$ under $\alpha\geq0$,  we have 
\begin{align*}
Q_{t+s}^{(\alpha)}f=Q_t^{(\alpha)}Q_s^{(\alpha)}f.
\end{align*}
Differentiating with respect to $s$ and setting $s=t$, we have 
\begin{align*}
\left.\frac{\partial}{\partial a}Q_a^{(\alpha)}f\right|_{a=2t} =
Q_t^{(\alpha)}\left.\frac{\partial}{\partial a}Q_a^{(\alpha)}f\right|_{a=t}.
\end{align*}
Therefore, from the sub-Markovian property of $(Q_t^{(\alpha)})_{t\geq0}$
\begin{align*}
g_f^{\rightarrow}(x,2t)^2&=\left(\left.\frac{\partial}{\partial a}Q_a^{(\alpha)}f(x)\right|_{a=2t} \right)^2
=\left(Q_t^{(\alpha)}\left.\frac{\partial}{\partial a}Q_a^{(\alpha)}f(x)\right|_{a=t} \right)^2\\&
\leq Q_t^{(\alpha)}\left(\left|\frac{\partial}{\partial t}Q_t^{(\alpha)}f\right|^2\right)(x)=Q_t^{(\alpha)}(g_f^{\rightarrow}(\cdot,t)^2)(x).
\end{align*}
Integrating this, we have
\begin{align*}
G_f^{\rightarrow}(x)^2=4\int_0^{\infty}tg_f^{\rightarrow}(x,2t)^2\ dt
\leq 4\int_0^{\infty}tQ_t^{(\alpha)}(g_f^{\rightarrow}(\cdot,t))(x)^2\d t=4H_f^{\rightarrow}(x)^2.
\end{align*}
This completes the proof. 
\end{proof} 

Combining Propositions~\ref{prop:ShigekawaYoshida} and \ref{prop:GHEest}, 
\eqref{eq:LittlewoodPaleyStein1} holds for 
$f\in D(\Delta)\cap L^p(X;\m)\cap \mathscr{B}^*(X)$ under $p\in]2,+\infty[$ and $\alpha\geq C_{\kappa}$ with 
$\alpha>0$.  
Under $\kappa^-=0$ and $\alpha=0$, we can see that \eqref{eq:LittlewoodPaleyStein1} holds for 
$f\in D(\Delta)\cap L^p(X;\m)\cap \mathscr{B}^*(X)$ and $p\in]2,+\infty[$ by way of the same argument 
as in the previous subsection.

\subsection{Proof of the upper estimates \eqref{eq:LittlewoodPaleyStein1} and \eqref{eq:LittlewoodPaleyStein1+} 
under $p=2$}\label{subsec:p=2}
We prove the upper estimates \eqref{eq:LittlewoodPaleyStein1} and \eqref{eq:LittlewoodPaleyStein1+} under $p=2$. 
\begin{prop}\label{prop:GHEestp=2}
When $\alpha>0$,
\begin{align}
2\|G_f^{\rightarrow}\|_{L^2(X;\m)}=\|f\|_{L^2(X;\m)},\quad \text{ for all }\quad f\in L^2(X;\m),\label{eq:EqE+1}\\
2\|G_f^{\uparrow}\|_{L^2(X;\m)}\leq\|f\|_{L^2(X;\m)},\quad \text{ for all }\quad f\in L^2(X;\m).\label{eq:EqE+2}
\end{align}
When $\alpha=0$,
\begin{align}
2\|G_f^{\rightarrow}\|_{L^2(X;\m)}=\|f-E_{o}f\|_{L^2(X;\m)},\quad \text{ for all }\quad f\in L^2(X;\m),\label{eq:EqEo1}\\
2\|G_f^{\uparrow}\|_{L^2(X;\m)}=\|f-E_{o}f\|_{L^2(X;\m)},\quad \text{ for all }\quad f\in L^2(X;\m).\label{eq:EqEo2}
\end{align}
In particular, $\sqrt{2}\|G_f\|_{L^2(X;\m)}\leq \|f\|_{L^2(X;\m)}$ for all $f\in L^2(X;\m)$ under $\alpha>0$, 
and $\sqrt{2}\|G_f\|_{L^2(X;\m)}= \|f-E_{o}f\|_{L^2(X;\m)}$ for all $f\in L^2(X;\m)$ under $\alpha=0$. 
\end{prop}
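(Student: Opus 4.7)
\medskip

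The plan is to use the spectral theorem for the nonnegative self-adjoint operator $-\Delta$ on $L^2(X;\m)$, combined with Fubini's theorem and a single elementary Laplace-type integral. Let $(E_\lambda)_{\lambda\in\R}$ be the resolution of the identity associated with $\Delta$, so that the spectrum sits in $]-\infty,0]$ and
\[
Q_t^{(\alpha)}f = \int_{[0,\infty[} e^{-t\sqrt{\alpha+\lambda}}\,\d E_\lambda f,\qquad \tfrac{\partial}{\partial t}Q_t^{(\alpha)}f = -\int_{[0,\infty[}\sqrt{\alpha+\lambda}\,e^{-t\sqrt{\alpha+\lambda}}\,\d E_\lambda f,
\]
for any $f\in L^2(X;\m)$ (and strictly for $t>0$ when $\alpha+\lambda=0$ could occur). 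The key identity I will use repeatedly is $\int_0^\infty t e^{-2t\mu}\,\d t = 1/(4\mu^2)$ for $\mu>0$, together with the carr\'e-du-champ identity $\int_X \Gamma(u)\,\d\m = \mathscr{E}(u,u)$ for $u\in D(\mathscr{E})$.

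First I would compute $\|G_f^{\rightarrow}\|_{L^2(X;\m)}^2$. By Fubini and the spectral representation,
\[
\|G_f^{\rightarrow}\|_{L^2(X;\m)}^2 = \int_0^\infty t\,\bigl\|\tfrac{\partial}{\partial t}Q_t^{(\alpha)}f\bigr\|_{L^2(X;\m)}^2\,\d t = \int_{[0,\infty[}(\alpha+\lambda)\Bigl(\int_0^\infty t e^{-2t\sqrt{\alpha+\lambda}}\,\d t\Bigr)\d(E_\lambda f,f).
\]
For $\alpha+\lambda>0$ the inner integral equals $1/(4(\alpha+\lambda))$, giving the integrand $1/4$; the only point where this fails is $\alpha=\lambda=0$, where the integrand is $0$. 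Hence $\|G_f^{\rightarrow}\|_{L^2(X;\m)}^2 = \tfrac14\int_{]0,\infty[ \text{ or }[0,\infty[}\d(E_\lambda f,f)$, which equals $\tfrac14\|f\|_{L^2(X;\m)}^2$ when $\alpha>0$ and $\tfrac14\|f-E_o f\|_{L^2(X;\m)}^2$ when $\alpha=0$, since $E_o$ is the projection onto $\ker\Delta=\mathrm{Ran}\,E_{\{0\}}$. This gives \eqref{eq:EqE+1} and \eqref{eq:EqEo1}.

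Next I would treat $\|G_f^{\uparrow}\|_{L^2(X;\m)}^2$ by the same scheme, using $\int_X \Gamma(Q_t^{(\alpha)}f)\,\d\m = \mathscr{E}(Q_t^{(\alpha)}f,Q_t^{(\alpha)}f) = \int_{[0,\infty[}\lambda\, e^{-2t\sqrt{\alpha+\lambda}}\,\d(E_\lambda f,f)$, which is finite for every $t>0$ and any $f\in L^2$. Fubini then yields
\[
\|G_f^{\uparrow}\|_{L^2(X;\m)}^2 = \int_{[0,\infty[}\frac{\lambda}{4(\alpha+\lambda)}\,\d(E_\lambda f,f),
\]
with the convention that the integrand is $0$ at $\lambda=0$ regardless of $\alpha$. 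When $\alpha>0$ we have $\lambda/(\alpha+\lambda)\le 1$, giving $\|G_f^{\uparrow}\|_{L^2}^2\le \tfrac14\|f\|_{L^2}^2$, i.e.\ \eqref{eq:EqE+2}. When $\alpha=0$ the ratio equals $1$ on the support away from $0$, so $\|G_f^{\uparrow}\|_{L^2}^2 = \tfrac14\|f-E_o f\|_{L^2}^2$, i.e.\ \eqref{eq:EqEo2}.

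Finally, since $g_f^2 = (g_f^{\rightarrow})^2 + (g_f^{\uparrow})^2$ pointwise by definition, Fubini gives $\|G_f\|_{L^2(X;\m)}^2 = \|G_f^{\rightarrow}\|_{L^2(X;\m)}^2 + \|G_f^{\uparrow}\|_{L^2(X;\m)}^2$, from which the last two assertions on $\sqrt{2}\,\|G_f\|_{L^2(X;\m)}$ follow by adding the identities above. I do not anticipate a serious obstacle: the only mild care point is the behavior of the spectral integrand at $\lambda=0$ (and $\alpha=0$), which forces the $E_o$-correction and nothing more; all interchanges of integration are justified by non-negativity of the integrands via Tonelli.
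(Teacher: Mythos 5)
Your proposal is correct and follows essentially the same route as the paper: spectral representation of $Q_t^{(\alpha)}$ through the resolution of the identity of $-\Delta$, Tonelli to swap the $t$- and $\lambda$-integrations, the Laplace integral $\int_0^\infty t e^{-2\sqrt{\lambda+\alpha}\,t}\,\d t=\tfrac{1}{4(\lambda+\alpha)}$ for $\lambda+\alpha>0$, the carr\'e-du-champ identity $\int_X\Gamma(u)\,\d\m=\mathscr{E}(u,u)$ to handle $G_f^{\uparrow}$, and the pointwise Pythagorean relation $g_f^2=(g_f^{\rightarrow})^2+(g_f^{\uparrow})^2$ for the final assertion. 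The only cosmetic remark is that you describe $(E_\lambda)$ as associated with $\Delta$ (spectrum in $]-\infty,0]$) yet integrate over $[0,\infty[$ as if it were associated with $-\Delta$; the paper commits the same harmless abuse, so this is not a substantive discrepancy.
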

\begin{proof}[\bf Proof]  
First note that for $f\in L^2(X;\m)$
\begin{align*}
\|G_f^{\rightarrow}\|_{L^2(X;\m)}^2&=\int_X\m(\d x)\int_0^{\infty}t\left|\frac{\partial}{\partial t}Q_t^{(\alpha)}f(x) \right|^2\d t\\
&=\int_0^{\infty}t\d t \left\|\frac{\partial}{\partial t}Q_t^{(\alpha)}f \right\|_{L^2(X;\m)}^2\\
&=\int_0^{\infty} t\d t\int_0^{\infty}(\lambda+\alpha)e^{-2\sqrt{\lambda+\alpha}t}\d(E_{\lambda}f,f)\\
&=\int_0^{\infty}\left\{\int_0^{\infty}t(\lambda+\alpha)e^{-2\sqrt{\lambda+\alpha}t}\d t \right\}\d(E_{\lambda}f,f).
\end{align*}
Now, noticing that 
\begin{align*}
\int_0^{\infty}t(\lambda+\alpha)e^{-2\sqrt{\lambda+\alpha}t}\d t=\left\{\begin{array}{cl}\frac14, & \lambda+\alpha>0, \\0, & \lambda+\alpha=0,\end{array}\right.
\end{align*}
we have 
\begin{align*}
\|G_f^{\rightarrow}\|_{L^2(X;\m)}^2=\left\{\begin{array}{lc}\frac14\|f\|_{L^2(X;\m)}^2, & \alpha>0, \\ \frac14\|f-E_{o}f\|_{L^2(X;\m)}^2, & \alpha=0.\end{array}\right.
\end{align*}
Next, let us consider $G_f^{\uparrow}$: 
\begin{align*}
\|G_f^{\uparrow}\|_{L^2(X;\m)}^2&=\int_X\m(\d x)\int_0^{\infty}t\Gamma(Q_t^{(\alpha)}f)\d t\\
&=\int_0^{\infty} t\d t\int_X\Gamma(Q_t^{(\alpha)}f)\m(\d x)\\
&=\int_0^{\infty} t\d t\int_X(-\Delta Q_t^{(\alpha)})f(x)Q_t^{(\alpha)}f(x)\m(\d x)\\
&=\int_0^{\infty}t \d t\int_0^{\infty}\lambda e^{-2\sqrt{\lambda+\alpha}t}\d (E_{\lambda}f,f)\\
&=\int_{]0,+\infty[}\frac{\lambda}{4(\lambda+\alpha)}\d (E_{\lambda}f,f).
\end{align*}
Thus, $\|G_f^{\uparrow}\|_{L^2(X;\m)}^2\leq \frac14\|f\|_{L^2(X;\m)}^2$ follows when $\alpha>0$ and 
$\|G_f^{\uparrow}\|_{L^2(X;\m)}^2=\frac14\|f-E_{o}f\|_{L^2(X;\m)}^2$ follows when $\alpha=0$. 
Therefore, $\|G_f\|_{L^2(X;\m)}^2=\|G_f^{\rightarrow}\|_{L^2(X;\m)}^2+\|G_f^{\uparrow}\|_{L^2(X;\m)}^2\leq\frac12
\|f\|_{L^2(X;\m)}^2$ for $\alpha>0$ and $\|G_f\|_{L^2(X;\m)}^2=\frac12
\|f-E_{o}f\|_{L^2(X;\m)}^2$ for $\alpha=0$. 
\end{proof} 
\begin{lem}\label{lem:SpectralMeasure}
Suppose $f\in L^2(X;\m)$. Then $E_of\in D(\mathscr{E})$ and $\mathscr{E}(E_of,E_of)=0$. 
In particular, $E_of=0$ if $(\mathscr{E},D(\mathscr{E}))$ is transient. 
Moreover, 
$G_{E_of}=0$ for $f\in L^2(X;\m)$. 
\end{lem}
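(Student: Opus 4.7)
The plan is to exploit the spectral representation of $\mathscr{E}$ via the resolution $(E_{\lambda})_{\lambda\in\R}$ of the non-positive $L^2$-generator $\Delta$. Since $E_o=E_{\{0\}}$ is the orthogonal projection onto $\ker\Delta$, for any $f\in L^2(X;\m)$ the scalar spectral measure $\lambda\mapsto(E_\lambda E_of,E_of)$ is a point mass at $0$ of total weight $\|E_of\|_{L^2(X;\m)}^2$. Combining this with the spectral identity
\begin{align*}
\mathscr{E}(g,g)=\int_{-\infty}^{0}(-\lambda)\,\d(E_\lambda g,g),\qquad D(\mathscr{E})=\left\{g\in L^2(X;\m)\;\left|\;\int_{-\infty}^{0}(-\lambda)\,\d(E_\lambda g,g)<\infty\right.\right\},
\end{align*}
and inserting $g=E_of$, the integrand $(-\lambda)$ vanishes on the support of the spectral measure, so $E_of\in D(\mathscr{E})$ and $\mathscr{E}(E_of,E_of)=0$.

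For the transience assertion, I would invoke the classical fact (see, e.g., \cite[Theorem~1.5.3]{FOT}) that when $(\mathscr{E},D(\mathscr{E}))$ is transient, its extended Dirichlet space $(D_e(\mathscr{E}),\mathscr{E})$ is a real Hilbert space, so $\mathscr{E}(g,g)=0$ forces $g=0$ $\m$-a.e.\ for every $g\in D_e(\mathscr{E})$. Applying this to $E_of\in D(\mathscr{E})\subset D_e(\mathscr{E})$ yields $E_of=0$.

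Finally, the identity $G_{E_of}=0$ in the regime $\alpha=0$ relevant to Theorem~\ref{thm:main1} should follow directly from what is already established. By the spectral theorem, $P_t$ acts as the identity on $\ker\Delta$, i.e.\ $P_tE_of=E_of$ for every $t\geq 0$, and since $\lambda_t$ is a probability measure this yields
\begin{align*}
Q_t^{(0)}E_of=\int_0^{\infty}P_sE_of\,\lambda_t(\d s)=E_of\qquad\text{for all }\;t\geq 0.
\end{align*}
Consequently $\partial_t Q_t^{(0)}E_of\equiv 0$, so $g_{E_of}^{\rightarrow}\equiv 0$. Moreover, $0=\mathscr{E}(E_of,E_of)=\int_X\Gamma(E_of)\,\d\m$ with $\Gamma(E_of)\geq 0$ forces $\Gamma(E_of)=0$ $\m$-a.e., hence $\Gamma(Q_t^{(0)}E_of)=\Gamma(E_of)=0$ and $g_{E_of}^{\uparrow}\equiv 0$. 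Combining, $g_{E_of}\equiv 0$ and therefore $G_{E_of}=0$. The only non-routine ingredient in this plan is invoking the Hilbert-space structure of the extended Dirichlet space in the transient case; everything else is direct spectral calculus.
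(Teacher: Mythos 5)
Your proof is correct and follows essentially the same spectral-calculus approach as the paper: both arguments exploit that the scalar spectral measure of $E_of$ is a point mass at $\lambda=0$, so the Dirichlet energy vanishes, and then deduce $G_{E_of}=0$ from $P_tE_of=E_of$ and $\Gamma(E_of)=0$ $\m$-a.e. The only cosmetic difference is that the paper establishes $\mathscr{E}(E_of,E_of)=0$ by computing $(P_tE_of,E_of)=\|E_of\|^2$ and using the semigroup characterization of $\mathscr{E}$, whereas you plug into the direct spectral representation of $\mathscr{E}$; likewise the paper checks $\|g_{E_of}^{\rightarrow}(\cdot,t)\|_{L^2}=\|g_{E_of}^{\uparrow}(\cdot,t)\|_{L^2}=0$ while you note the pointwise identities $g_{E_of}^{\rightarrow}=g_{E_of}^{\uparrow}=0$ directly.
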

\begin{proof}[\bf Proof]  
In view of the spectral representation, 
\begin{align*}
(P_tE_of,E_o f)&=\int_0^{\infty}e^{-\lambda t}\d (E_{\lambda}E_of,E_of)\\
&=\lim_{n\to\infty}\sum_{k=0}^{2^n\cdot n-1}e^{-\frac{k}{2^n}t}\int_{[\frac{k}{2^n},\frac{k+1}{2^n}[ }
\d(E_{\lambda}E_of,E_of)+e^{-nt}\int_{[n,+\infty[}\d(E_{\lambda}E_of,E_of)
\\
&=\lim_{n\to\infty}\sum_{k=0}^{2^n\cdot n-1}e^{-\frac{k}{2^n}t}
(E_{[\frac{k}{2^n},\frac{k+1}{2^n}[}E_{\{0\}}f,E_{\{0\}}f)
+e^{-nt}(E_{[n,+\infty[}E_{\{0\}}f,E_{\{0\}}f)
\\
&=(E_{[0,\frac{1}{2^n}[}E_{\{0\}}f,E_{\{0\}}f)
=
(E_{\{0\}}f,E_{\{0\}}f)=\|E_of\|_{L^2(X;\m)}^2,
\end{align*}
which yields the first conclusion. In the same way, we can get $
\mathscr{E}(P_sE_of,P_sE_of)
=\mathscr{E}(Q_t^{(0)}\!E_of,Q_t^{(0)}E_of)=0$ for 
$f\in L^2(X;\m)$. This implies that for each $t>0$
\begin{align*}
\int_Xg_{E_o f}^{\rightarrow}(\cdot,t)^2\d\m&=\int_X|\sqrt{-\Delta}Q_t^{(0)}E_of|^2\d\m=\mathscr{E}(Q_t^{(0)}\!E_of,Q_t^{(0)}\!E_of)=0,\\
\int_Xg_{E_o f}^{\uparrow}(\cdot,t)^2\d\m&=\int_X\Gamma(Q_t^{(0)}\!E_of)\d\m=\mathscr{E}(Q_t^{(0)}\!E_of,Q_t^{(0)}\!E_of)=0,
\end{align*}
which yields the second conclusion.  
\end{proof} 
\subsection{Proof of \eqref{eq:LittlewoodPaleyStein1} 
for general $\alpha\geq C_{\kappa}$ with $\alpha>0$ and 
$p\in]1,+\infty[$}
\begin{proof}[\bf Proof of \boldmath\eqref{eq:LittlewoodPaleyStein1}]
It is clear that Subsections~\ref{subsec:3.2}, \ref{subsec:p>2} and \ref{subsec:p=2}
conclude the desired upper estimate \eqref{eq:LittlewoodPaleyStein1} for $f\in D(\Delta)\cap L^p(X;\m)$ and $p\in]1,+\infty[$. Any $f\in L^p(X;\m)\cap L^2(X;\m)$ can be approximated by $f_n:=P_{1/n}f\in D(\Delta)\cap L^p(X;\m)$ in $L^p$-norm and in $L^2$-norm. We can obtain the upper estimate \eqref{eq:LittlewoodPaleyStein1} for 
$f\in L^p(X;\m)\cap L^2(X;\m)$ under $p\in]1,+\infty[$ and $\alpha\geq C_{\kappa}$ with $\alpha>0$. 
Moreover, an $L^p$-approximation by a sequence in $L^p(X;\m)\cap L^2(X;\m)$ tells us that \eqref{eq:LittlewoodPaleyStein1} 
holds for $f\in L^p(X;\m)$ under $p\in]1,+\infty[$ and $\alpha\geq C_{\kappa}$ with $\alpha>0$. 
When $\alpha=0$ with $\kappa^-=0$, \eqref{eq:LittlewoodPaleyStein1} still holds for 
$f\in D(\Delta)\cap L^p(X;\m)$ under $p\in]1,+\infty[$. Then, similarly as above we get the same conclusion for $f\in L^p(X;\m)$ under $p\in]1,+\infty[$ and $\alpha=0$ with $\kappa^-=0$.  
\end{proof}

\subsection{Proof of \eqref{eq:LittlewoodPaleyStein2} for general $\alpha\geq C_{\kappa}$ with $\alpha>0$ and 
$p\in]1,+\infty[$}
\begin{proof}[\bf Proof of {\boldmath\eqref{eq:LittlewoodPaleyStein2}}]
The proof of the lower estimate \eqref{eq:LittlewoodPaleyStein2} follows from the upper estimate 
\eqref{eq:LittlewoodPaleyStein1}, 
\eqref{eq:EqE+1} and the 
argument in \cite[Subsection~3.2.11]{ShigekawaText}. 
Moreover, an $L^p$-approximation by a sequence in $L^p(X;\m)\cap L^2(X;\m)$ tells us that \eqref{eq:LittlewoodPaleyStein2} holds for 
$f\in L^p(X;\m)$ under $p\in]1,+\infty[$ and $\alpha\geq C_{\kappa}$ with $\alpha>0$. 
When $\alpha=0$ with $\kappa^-=0$, \eqref{eq:LittlewoodPaleyStein2} still holds for $f\in D(\Delta)\cap L^p(X;\m)$ under $p\in]1,+\infty[$ and the transience of $(\mathscr{E},D(\mathscr{E}))$ with Lemma~\ref{lem:SpectralMeasure}. 
Then, similarly as above we get the same conclusion for $f\in L^p(X;\m)$ under $p\in]1,+\infty[$, $\alpha=0$ with $\kappa^-=0$ and the transience of $(\mathscr{E},D(\mathscr{E}))$. 
\end{proof}

\subsection{Proofs of \eqref{eq:LittlewoodPaleyStein1+}, \eqref{eq:LittlewoodPaleyStein2+} and 
\eqref{eq:LittlewoodPaleyStein3+}
for  $\alpha=0$ and $p\in]1,+\infty[$ 
under $\kappa^-=0$}\label{subsec:LastProof}

We already obtain the estimates \eqref{eq:LittlewoodPaleyStein1+}, 
\eqref{eq:LittlewoodPaleyStein2+} and \eqref{eq:LittlewoodPaleyStein3+} under $p=2$. 
To prove  \eqref{eq:LittlewoodPaleyStein1+}, \eqref{eq:LittlewoodPaleyStein2+} and \eqref{eq:LittlewoodPaleyStein3+} for general $p\in]1,+\infty[$, we need to show $E_of\in L^p(X;\m)$ for 
$f\in  L^2(X;\m)$ at first. 
By Lemma~\ref{lem:SpectralMeasure}, we know $E_of\in D(\mathscr{E})$ and $\mathscr{E}(E_of,E_of)=0$.  When
$(\mathscr{E},D(\mathscr{E}))$ is transient, $E_of=0\in L^p(X;\m)$ in this case. 
When $(\mathscr{E},D(\mathscr{E}))$ is irreducible, then $E_of\equiv c_f\in L^2(X;\m)$. 
If $\m(X)=+\infty$, then $c_f=0$, hence $E_of=0\in L^p(X;\m)$. If $\m(X)<+\infty$, then $E_of=c_f\in L^p(X;\m)$. 

Under $\kappa^-=0$ and $\alpha=0$, we already know that \eqref{eq:LittlewoodPaleyStein1} and 
\eqref{eq:LittlewoodPaleyStein2} hold for $f\in L^p(X;\m)\cap L^2(X;\m)$ with $p\in]1,+\infty[$. Replacing $f$ with $f-E_of$ in \eqref{eq:LittlewoodPaleyStein1} and \eqref{eq:LittlewoodPaleyStein2} and noting $G_{E_of}=0$ by 
Lemma~\ref{lem:SpectralMeasure}, we can conclude that \eqref{eq:LittlewoodPaleyStein1+} and 
\eqref{eq:LittlewoodPaleyStein2+} 
 hold for $f\in L^p(X;\m)\cap L^2(X;\m)$ with $p\in]1,+\infty[$ under $\kappa^-=0$, $\alpha=0$ and the transience or irreducibility. Further, under $\kappa^-=0$ and $\alpha=0$, 
we can directly deduce the lower estimate \eqref{eq:LittlewoodPaleyStein3+} 
as discussed in \cite[Subsection~3.2.11]{ShigekawaText} 
from the upper estimate \eqref{eq:LittlewoodPaleyStein1+} by using \eqref{eq:EqEo2}.
This completes the whole proof of Theorem~\ref{thm:main1}.

\section{Proof of Theorem~\ref{thm:main2}}
The proof of Theorem~\ref{thm:main2} for $q>1$ is quite similar to the proof of 
\cite[Theorem~1.3]{KawabiMiyokawa} 
based on Theorem~\ref{thm:main1}. 
So we omit the details of the proof.
We only mention the well-definedness of $R_{\alpha}^{(q)}(\Delta)f$. Though 
the Riesz operator $f\mapsto R_{\alpha}^{(q)}(\Delta )f$ is not linear, we can confirm the following triangle inequality: for $f_1,f_2\in L^p(X;\m)\cap L^2(X;\m)$
\begin{align*}
\|R_{\alpha}^{(q)}(\Delta )f_1-R_{\alpha}^{(q)}(\Delta )f_2 \|_{L^p(X;\m)}^p&=
\int_X\left|\sqrt{\Gamma((\alpha-\Delta)^{-\frac{q}{2}}f_1)}-\sqrt{\Gamma((\alpha-\Delta)^{-\frac{q}{2}}f_2)} \right|^p\d\m\\
&\leq \int_X\Gamma((\alpha-\Delta)^{-\frac{q}{2}}(f_1-f_2))^{\frac{p}{2}}\d\m\\
&=\|R_{\alpha}^{(q)}(\Delta )(f_1-f_2) \|_{L^p(X;\m)}^p\\
&\leq \|R_{\alpha}^{(q)}(\Delta )\|_{p,p}\|f_1-f_2\|_{L^p(X;\m)},
\end{align*}
which implies the extendability of $R_{\alpha}^{(q)}(\Delta )f$ for $f\in L^p(X;\m)$.

\section{Examples}\label{sec:examples}
It is well-known that (abstract) Wiener space $(B,H,\mu)$ satisfies Littlewood-Paley-Stein inequality (see Shigekawa~\cite[Chapter~3]{ShigekawaText}). Though our Theorems~\ref{thm:main1} and \ref{thm:main2} are not new for $(B,H,\mu)$, we observe the conditions:   
Let $(\mathscr{E}^{\rm OU},D(\mathscr{E}^{\rm OU}))$ be the Dirichlet form on 
$L^2(B;\mu)$ associated to the Ornstein-Uhlenbeck process ${\bf X}^{\rm OU}$ and $(T_t^{\rm OU})_{t\geq0}$ its associated semigroup on $L^2(B;\mu)$. 
Let $D_H$ be the $H$-derivative, i.e. $\langle D_HF(z), h\rangle_H=\lim_{\eps\to0}\frac{F(z+\eps h)-F(z)}{\eps}$, for a cylindrical function $F$. It is known that  
${\sf BE}_2(1,\infty)$-condition holds for $(B,\mathscr{E}^{\rm OU},\mu)$ (see \cite[13.2]{AES}), that is, 
$(B,\mathscr{E}^{\rm OU},\mu)$ is tamed by $\mu\in S_K({\bf X}^{\rm OU})$. 

Moreover, the contents of Theorems~\ref{thm:main1} and \ref{thm:main2} are proved in Kawabi-Miyokawa~\cite{KawabiMiyokawa} under related different conditions (our Theorems~\ref{thm:main1} and \ref{thm:main2} do not cover the results of  \cite{KawabiMiyokawa}, but are not covered by \cite{KawabiMiyokawa}).  
More concretely, let $E$ be a Hilbert space defined by $E:=L^2(\R,\R^d;e^{-2\lambda\chi(x)}\d x)$ with a fixed $\chi\in C^{\infty}(\R)$ satisfying $\chi(x)=|x|$ for $|x|\geq1$ and another Hilbert space $H:=L^2(\R,\R^d;\d x)$.   
They consider a Dirichlet form $(\mathscr{E}, D(\mathscr{E}))$ on $L^2(E;\mu)$ associated with the diffusion process ${\bf X}$ on an infinite volume 
 path space $C(\R,\R^d)$ with ($U$-)Gibbs measures $\mu$ associated with the (formal) Hamiltonian
 \begin{align*}
 \mathcal{H}(w):=\frac12\int_{\R}|w'(x)|^2_{\R^d}\d x+\int_{\R}U(w(x))\d x, 
 \end{align*}  
 where $U: \R^{d} \to \R$ is an interactions potential satisfying ${\nabla^2}U\geq -K_{1}$
with some $K_{1}\in\R$  
 (see \cite[4.1]{KawabiMiyokawa}). 
 Then the $L^2$-semigroup $(P_t)_{t\geq0}$ associated to $(\mathscr{E}, D(\mathscr{E}))$ satisfies the following gradient estimate
\begin{align*}
|D(P_tf)(w)|_H\leq e^{K_1t}P_t(|Df|_H)(w)\quad \text{ for }\quad \mu\text{-a.e.~}w\in E,
\end{align*}
which is equivalent to ${\sf BE}_2(-K_1,\infty)$-condition. 
Here $Df$ is a closed extension of the Fr\'echet derivative $Df:E\to H$ for cylindrical function $f$.    
Hence $(E,\mathscr{E},\mu)$ is tamed by $-K_1\mu$ with $|K_1|\mu\in S_K({\bf X})$. 

\bigskip 

In the rest, we expose new examples. 
\begin{example}[{{\bf RCD spaces}}]
{\rm A metric measure space $(X,{\sf  d},\m)$ is a complete separable metric space $(X,{\sf d})$ with a $\sigma$-finite Borel measure with $\m(B)<\infty$ for any bounded Borel set $B$. We assume $\m$ has full topological support, i.e., $\m(G)>0$ for non-empty open set $G$. 

Any metric open ball is denoted by $B_r(x):=\{y\in X\mid {\sf d}(x,y)<r\}$ for $r>0$ and $x\in X$. 
A subset $B$ of $X$ is said to be bounded if it is included in a metric open ball.    
Denote by $C([0,1],X)$ the space of continuous curve defined on the unit interval $[0,1]$ equipped the distance ${\sf d}_{\infty}(\gamma,\eta):=\sup_{t\in[0,1]}{\sf d}(\gamma_t,\eta_t)$ for every $\gamma,\eta\in C([0,1],X)$. This turn $C([0,1],X)$ into complete separable metric space.   
Next we consider the set of $2$-absolutely continuous curves, denoted by $AC^q([0,1],X)$, is the subset of $\gamma\in C([0,1],X)$ so that there exists  $g\in L^q(0,1)$ satisfying 
\begin{align*}
{\sf d}(\gamma_t,\gamma_s)\leq\int_s^tg(r)\d r,\quad s<t\quad\text{ in }[0,1].
\end{align*}
Recall that for any $\gamma\in AC^2([0,1],X)$, there exists a minimal a.e.~function $g\in L^2(0,1)$ satisfying the above, called {\it metric speed} denoted by $|\dot\gamma_t|$, which is defined as 
$|\dot\gamma_t|:=\lim_{h\downarrow0}{\sf d}(\gamma_{t+h}, \gamma_t)/h$ for $\gamma\in AC^2([0,1],X)$, 
$|\dot\gamma_t|:=+\infty$ otherwise. 
We define the kinetic energy functional $C([0,1],X)\ni\gamma\mapsto {\sf Ke}_t(\gamma):=\int_0^1|\dot{\gamma}_t|^2\d t$, if $\gamma\in AC^2([0,1],X)$, ${\sf Ke}_t(\gamma):=+\infty$ otherwise. 
\begin{defn}[{{\bf {\boldmath$2$}-test plan}}]\label{def:$q$-test}
{\rm Let $(X,{\sf d},\m)$ be a metric measure space. 
A measure {\boldmath$\pi$}$\in\mathscr{P}(C([0,1],X))$ is said to be a $2$-test plan, provided 
\begin{enumerate}
\item[(i)]\label{item:qtest1} there exists $C>0$ so that $({\sf e}_t)_{\sharp}${\boldmath$\pi$}$\leq C\m$ for every $t\in[0,1]$;
\item[(ii)]\label{item:qtest2} we have $\int_{C([0,1],X)}{\sf Ke}_2(\gamma)${\boldmath$\pi$}$(\d\gamma)<\infty$. 
\end{enumerate} 
}
\end{defn}

\begin{defn}[{{\bf Sobolev space \boldmath{$W^{\hspace{0.03cm}1,2}(X)$}}}]\label{def:W1pSobolev}
{\rm
A Borel function $f\in L^2(X;\m)$ belongs to $W^{1,2}(X)$, 
provided there exists a $G\in L^2(X;\m)$,  
called {\it $2$-weak upper gradient} of $f$ so that 
\begin{align}
\int_{C([0,1],X)}|f(\gamma_1)-f(\gamma_0)|\text{\boldmath$\pi$}(\d\gamma)\leq \int_{C([0,1],X)}\int_0^1
G(\gamma_t)|\dot{\gamma}_t|\d t\text{\boldmath$\pi$}(\d\gamma),\quad \text{$\forall${\boldmath$\pi$} 
$2$-test plan}. \label{eq:SpSobolev}
\end{align} 
The assignment $(t,\gamma)\mapsto G(\gamma_t)|\dot{\gamma}_t|$ is Borel measurable (see \cite[Remark~2.1.9]{GPLecture}) and the right hand side of 
\eqref{eq:SpSobolev} is finite for $G\in L^2(X;\m)_+$ (see \cite[(2.5)]{GigliNobili}). These shows not only the finiteness of the right hand side of \eqref{eq:SpSobolev} but also the continuity of the assignment 
$L^2(X;\m)\ni G\mapsto  \int_{C([0,1],X)}\int_0^1 G(\gamma_t)|\dot{\gamma}_t|\d t\text{\boldmath$\pi$}(\d\gamma)$. This, combined with the closedness of the convex combination of the $2$-weak upper gradient, shows that the set of $2$-weak upper gradient of a given Borel function $f$ is a closed convex subset of $L^2(X;\m)$. The minimal $p$-weak upper gradient, denoted by $|Df|_2$ is then the element of 
minimal $L^p$-norm in this class. Also, by making use of the lattice property of the set of $2$-weak 
upper gradient, such a minimality is also in the $\m$-a.e.~sense (see \cite[Proposition~2.17 and Theorem~2.18]{Ch:metmeas}). 

Then, $W^{1,2}(X)$ forms a Banach space equipped with the following norm:
equipped with the norm 
\begin{align*}
\|f\|_{W^{1,2}(X)}:=\left(\|f\|^2_{L^2(X;\m)}+\| |Df|_2\|^2_{L^2(X;\m)} \right)^{\frac{1}{2}} ,\quad f\in  W^{1,2}(X). 
\end{align*}
 
}
\end{defn}

It is in general false 
that  $(W^{1,2}(X),\|\cdot\|_{W^{1,2}(X)})$ is a Hilbert space. 
When this occurs, we say that $(X,{\sf d},\m)$ is {\it infinitesimally Hilbertian} (see 
\cite{Gigli:OntheDifferentialStr}). Equivalently, we call $(X,{\sf d},\m)$  infinitesimally Hilbertian 
provided the following {\it parallelogram identity} holds:
\begin{align}
2|Df|_2^2+2|Dg|_2^2=|D(f+g)|_2^2+|D(f-g)|_2^2, \quad \m\text{-a.e.} \quad \forall f,g\in W^{1,2}(X).\label{eq:parallelogram}
\end{align}

For simplicity, when $p=2$, we omit the suffix $2$ from $|Df|_2$ for $f\in W^{1,2}(X)$, i.e. we write $|Df|$ instead of $|Df|_2$ for $f\in W^{1,2}(X)$.  
Under \eqref{eq:parallelogram}, we can give a bilinear form $\langle D\cdot ,D\cdot\rangle:W^{1,2}(X)\times W^{1,2}(X)\to L^1(X;\m)$ which is defined by 
\begin{align*}
\langle Df,Dg\rangle:=\frac14|D(f+g)|^2-\frac14|D(f-g)|^2,\quad f,g\in W^{1,2}(X).
\end{align*}
Moreover, under the infinitesimally Hilbertian condition, 
the bilinear form $(\mathscr{E},D(\mathscr{E}))$ defined by 
\begin{align*}
D(\mathscr{E}):=W^{1,2}(X),\quad \mathscr{E}(f,g):=\frac12\int_X\langle Df,Dg\rangle\d \m
\end{align*}
is a strongly local Dirichlet form on $L^2(X;\m)$. 
Denote by $(P_t)_{t\geq0}$ the $\m$-symmetric semigroup on $L^2(X;\m)$ associated with $(\mathscr{E},D(\mathscr{E}))$. 
Under \eqref{eq:parallelogram}, let $(\Delta, D(\Delta))$ be the 
$L^2$-generator associated with $(\mathscr{E},D(\mathscr{E}))$ similarly defined as in \eqref{eq:generatorL2} before. 
\begin{defn}[{{\bf RCD-spaces}}]
{\rm A metric measure space $(X,{\sf d},\m)$ is said to be an {\it {\sf RCD}$(K,\infty)$-space} if 
it satisfies 
the following conditions: 
\begin{enumerate}
\item[\rm(1)]
$(X, {\sf d}, \m)$ is infinitesimally Hilbertian. 

\item[\rm(2)]
There exist $x_0 \in X$ and constants $c, C > 0$ such that 
$\m(B_r(x_0)) \le C \e^{c r^2}$. 

\item[\rm(3)]
If $f \in W^{1,2}(X)$ satisfies 
$| D f |_2 \le 1$ $\m$-a.e., then $f$ has a $1$-Lipschitz representative. 

\item[\rm(4)]
For any $f \in D ( \Delta )$ 
with $\Delta f \in W^{1,2}(X)$ 
and $g \in D ( \Delta ) \cap L^\infty (X; \m)$ 
with $g \ge 0$ and $\Delta g \in L^\infty (X; \m)$, 
\begin{align*}
\frac12 \int_X | D f |^2 \Delta g \, \d \m 
- \int_X \langle D f, D \Delta f \rangle g \, \d \m 
\ge 
K \int_X | D f |^2 g \, \d \m. 
\end{align*}
\end{enumerate}
Let $N\in[1,+\infty[$. 
A metric measure space $(X,{\sf d},\m)$ is said to be an {\it {\sf RCD}$(K,N)$-space} if 
it is an {\sf RCD}$(K,\infty)$-space and 
for any $f \in D ( \Delta )$ 
with $\Delta f \in W^{1,2}(X)$ 
and $g \in D( \Delta ) \cap L^\infty (X; \m)$ 
with $g \ge 0$ and $\Delta g \in L^\infty (X; \m)$, 
\begin{align*}
\frac12 \int_X | D f |^2 \Delta g \, \d \m 
- \int_X \langle D f, D \Delta f \rangle g \, \d \m 
\ge 
K \int_X | D f |^2 g \, \d \m 
+ \frac{1}{N} \int_X ( \Delta f )^2 g \, \d \m. 
\end{align*}

}
\end{defn}
\begin{remark}
{\rm \quad
\begin{enumerate}
\item It is shown in \cite{Cav-Mil,ZLi} that for 
$N\in[1,+\infty[$ {\sf RCD}$(K,N)$-space is equivalent to 
{\sf RCD}${}^*(K,N)$-space, where {\sf RCD}${}^*(K,N)$-space  
is defined to be a {\sf CD}${}^*(K,N)$-space 
 having infinitesimal Hilbertian condition. 
Originally, for $N\in[1,+\infty]$, the notion of {\sf CD}$(K,N)$-spaces was defined by Lott-Villani~\cite{LV2} and Sturm~\cite{StI,StII,StICorrect}. Later, Bacher-Sturm~\cite{Bacher-Sturm} gave the notion of 
{\sf CD}${}^*(K,N)$-space as a variant for $N\in[1,+\infty[$. Finally, Erbar-Kuwada-Sturm~\cite{EKS} invented the notion of 
{\sf CD}${}^e(K,N)$-space, so-called the metric measure space satisfying entropic curvature dimension condition under $N\in[1,+\infty[$ and prove that {\sf RCD}${}^*(K,N)$-space coincides with {\sf RCD}${}^e(K,N)$-space, i.e., 
{\sf CD}${}^e(K,N)$-space satisfying infinitesimal Hilbertian condition. 
The notion {\sf RCD}$(K,\infty)$-space, i.e., {\sf CD}$(K,\infty)$-space satisfying 
infinitesimal Hilbertian condition, was firstly considered in \cite{AGS_Riem,AGS_BakryEmery}, and 
the {\sf RCD}$(K,N)$-space under $N\in[1,+\infty[$ was also given by \cite{EKS,Gigli:OntheDifferentialStr}.
Moreover, under $N\in[1,+\infty[$, {\sf RCD}$(K,N)$-space (or {\sf RCD}${}^*(K,N)$-space) is a locally compact separable metric space, consequently, $\m$ becomes a Radon measure.  
\item If $(X,{\sf d},\m)$ is an {\sf RCD}$(K,N)$-space under $N\in[1,+\infty[$, 
it enjoys the Bishop-Gromov inequality: 
Let $\kappa:=K/(N-1)$ if $N>1$ and $\kappa:=0$ if $N=1$. We set $\omega_N:=\frac{\pi^{N/2}}{\int_0^{\infty}t^{N/2}e^{-t}\d t}$ (volume of unit bal in $\R^N$ provided $N\in\mathbb{N}$) and 
$V_{\kappa}(r):=\omega_N\int_0^r\s_{\kappa}^{N-1}(t)\d t$.  
 Then 
\begin{align}
\frac{\m(B_R(x))}{V_{\kappa}(R)}\leq \frac{\m(B_r(x))}{V_{\kappa}(r)},\qquad x\in X, \qquad 0<r<R. \label{eq:BishopGromov}
\end{align}
Here  
$\s_{\kappa}(s)$ is the solution to Jacobi equation $\s_{\kappa}''(s)+\kappa\s_{\kappa}(s)=0$ with 
$\s_{\kappa}(0)=0$, $\s_{\kappa}'(0)=1$. More concretely, $\s_{\kappa}(s)$ is given by
\begin{align*}
\s_{\kappa}(s):=\left\{\begin{array}{cc}\frac{\sin \sqrt{\kappa}s}{\sqrt{\kappa}} & \kappa>0, \\ s & \kappa=0, \\ \frac{\sinh \sqrt{-\kappa}s}{\sqrt{-\kappa}} & \kappa<0.\end{array}\right.
\end{align*}
\item If $(X,{\sf d},\m)$ is an $\mathsf{RCD}(K,\infty)$-space, it satisfies the following Bakry-\'Emery estimate:
\begin{align}
|DP_tf|\leq e^{-Kt}P_t|Df|\quad \m\text{-a.e.}\quad for \quad f\in W^{1,2}(X),\label{eq:BakryEmery}
\end{align}
in particular, $P_tf\in W^{1,2}(X)$
(see \cite[Corollary~3.5]{Sav14}, \cite[Proposition~3.1]{GigliHan}).
\item 
If $(X,{\sf d},\m)$ is an $\mathsf{RCD}(K,N)$-space with $N\in[1,+\infty[$, then,  
for any $f\in {\rm Lip}(X)$ 
${\rm lip}(f)=|Df|$ $\m$-a.e. holds, 
because  $\mathsf{RCD}(K,N)$ with $N\in[1,+\infty[$ admits the local volume doubling and a weak local $(1,2)$-Poincar\'e inequality (see \cite[\S5 and \S6]{Ch:metmeas}). 
\end{enumerate}
}
\end{remark}
By definition, for ${\sf RCD}(K,N)$-space $(X,{\sf d},\m)$,  $(X,\mathscr{E},\m)$ 
satisfies ${\sf BE}_2(K,N)$-condition and $K^{\pm}\m$ is always a Kato class smooth measure, 
hence it is a tamed Dirichlet space by $K\m$. So we can apply Theorems~\ref{thm:main1} and \ref{thm:main2} to $(X,{\sf d},\m)$. We remark that Theorem~\ref{thm:main1} is proved in \cite{HLiWeighted,HLi} for 
${\sf RCD}(K,\infty)$-space. However, the result in \cite{HLiWeighted,HLi} 
can be deduced from Kawabi-Miyokawa~\cite{KawabiMiyokawa}, because the space ${\rm Test}(X)$ 
of test functions forms an subalgebra of $C_b(X)$, and hence it satisfies the condition {\bf (A)} in  \cite{KawabiMiyokawa} 
(see Bouleau-Hirsch~\cite[Corollary~4.2.3]{BH} for the validity of {\bf (A)} in 
\cite{KawabiMiyokawa}).  
}
\end{example}

\begin{example}[{{\bf Riemannian manifolds with boundary}}]
{\rm \quad Let $(M,g)$ be a smooth Riemannian manifold with boundary $\partial M$. 
Denote by $\mathfrak{v}:={\rm vol}_g$ the Riemannian volume measure induced by $g$, and by 
$\mathfrak{s}$ the surface measure on $\partial M$
(see \cite[\S1.2]{Braun:Tamed2021}). 
If $\partial M\ne \emptyset$, then $\partial M$ is a smooth co-dimension $1$ submanifold of $M$ and it becomes Riemannian 
when endowed with the pullback metric 
\begin{align*}
\langle \cdot,\cdot\rangle_j:=j^*\langle\cdot,\cdot\rangle,\qquad \langle u,v\rangle:=g(u,v)\quad\text{ for }\quad u,v\in TM.
\end{align*}  
under the natural inclusion $j:\partial M\to M$. The map $j$ induces a natural inclusion $d_j:T\partial M\to TM|_{\partial M}$ which is not surjective. In particular, the vector bundles $T\partial M$ and $TM|_{\partial M}$ do not coincide. 

Let $\m$ be a Borel measure on $M$ which is locally equivalent to $\mathfrak{v}$. 
Let $D(\mathscr{E}):=W^{1,2}(M^{\circ})$ be the Sobolev space with respect to 
$\m$ defined in the usual sense on $M^{\circ}:=M\setminus\partial M$. Define $\mathscr{E}:W^{1,2}(M^{\circ})\to [0,+\infty[$ 
by 
\begin{align*}
\mathscr{E}(f):=\int_{M^{\circ}}|\nabla f|^2\d \m
\end{align*}  
and the quantity $\mathscr{E}(f,g)$, $f,g\in W^{1,2}(M^{\circ})$, by polarization. Then $(\mathscr{E}, D(\mathscr{E}))$ becomes a strongly local regular Dirichlet form on $L^2(M;\m)$, since $C_c^{\infty}(M)$ is a dense set of $D(\mathscr{E})$. 
Let $k:M^{\circ}\to\R$ and $\ell:\partial M\to\R$ be continuous functions providing lower bounds on the Ricci curvature and the second fundamental form of $\partial M$, respectively. 
Suppose that $M$ is compact and $\m=\mathfrak{v}$. Then $(M,\mathscr{E},\m)$ is tamed by 
\begin{align*}
\kappa:=k\mathfrak{v}+\ell\mathfrak{s},
\end{align*}
because $\mathfrak{v},\mathfrak{s}\in S_K({\bf X})$ (see \cite[Theorem~2.36]{ERST} and \cite[Theorem~5.1]{Hsu:2001}). Then one can apply Theorems~\ref{thm:main1} and \ref{thm:main2} to $(M,\mathscr{E},\m)$. 
Remark that \eqref{eq:LittlewoodPaleyStein1} is proved by 
Shigekawa~\cite[Propositions~6.2 and 6.4]{Shigekawa1} in the framework of compact smooth Riemannian manifold with boundary. 

More generally, if $M$ is regularly exhaustible, i.e., there exists an increasing sequence $(X_n)_{n\in\N}$ of domains $X_n\subset M^{\circ}$ with smooth boundary $\partial X_n$ such that $g$ is smooth on $X_n$ and the following properties hold:
\begin{enumerate}
\item[(1)] The closed sets $(\overline{X}_n)_{n\in\N}$ constitute an $\mathscr{E}$-nest for $(\mathscr{E},W^{1,2}(M))$.  
\item[(2)] For all compact sets $K\subset M^{\circ}$ there exists $N\in\N$ such tht $K\subset X_n$ for all $n\geq N$.
\item[(3)] There are lower bounds $\ell_n:\partial X_n\to\R$ for the curvature of $\partial X_n$ with $\ell_n=\ell$ on $\partial M\cap \partial X_n$ such that the distributions $\kappa_n=k\mathfrak{v}_n+\ell_n\mathfrak{s}_n$ are uniformly $2$-moderate in 
the sense that 
\begin{align}
\sup_{n\in\N}\sup_{t\in[0,1]}\sup_{x\in X_n}\E_x^{(n)}\left[e^{-2A_t^{\kappa_n}} \right]<\infty,\label{eq:2moderate}
\end{align}
where $\mathfrak{v}_n$ is the volume measure of $X_n$ and $\mathfrak{s}_n$ is the surface measure of $\partial X_n$.
\end{enumerate}
Suppose $\m=\mathfrak{v}$. 
Then $(M,\mathscr{E},\m)$ is tamed by $\kappa=k\mathfrak{v}+\ell\mathfrak{s}$ 
(see \cite[Theorem~4.5]{ERST}), hence Theorems~\ref{thm:main1} and \ref{thm:main2} hold for $(M,\mathscr{E},\m)$ provided 
$\kappa^+\in S_D({\bf X})$ and $2\kappa^-\in S_{E\!K}({\bf X})$. 

Let $Y$ be the domain defined by 
\begin{align*}
Y:=\{(x,y,z)\in\R^3\mid z>\phi(\sqrt{x^2+y^2})\},
\end{align*}
where $\phi:[0,+\infty[\to[0,+\infty[$ is $C^2$ on $]0,+\infty[$ with $\phi(r):=r-r^{2-\alpha}$, $\alpha\in]0,1[$ for 
$r\in[0,1]$, $\phi$ constant for $r\geq2$ and $\phi''(r)\leq0$ for $r\in[0,+\infty[$. 
Let $\m_Y$ be the $3$-dimensional Lebesgue measure restricted to $Y$ and 
$\sigma_{\partial Y}$ the $2$-dimensional Hausdorff measure on $\partial Y$. Denote by 
$\mathscr{E}_Y$ the Dirichlet form on $L^2(Y;\m)$ with Neumann boundary conditions. 
The smallest eigenvalue of the second fundamental form of $\partial Y$ can be given by 
\begin{align*}
\ell(r,\phi)=\frac{\phi''(r)}{(1+|\phi'(r)|^2)^{3/2}} (\leq0),
\end{align*}
for $r\leq 1$ and $\ell=0$ for $r\geq2$. It is proved in \cite[Theorem~4.6]{ERST} that 
the Dirichlet space $(Y,\mathscr{E}_Y,\m_Y)$ is tamed by 
\begin{align*}
\kappa=\ell\sigma_{\partial Y}.
\end{align*}
Since $|\kappa|=|\ell|\sigma_{\partial Y}\in S_K({\bf X})$ (see \cite[Lemma~2.34, Theorem~2.36, Proof of Theorem~4.6]{ERST}), 
we can apply Theorems~\ref{thm:main1} and \ref{thm:main2} for $(Y,\mathscr{E}_Y,\m_Y)$. 
}
\end{example}
\begin{example}[{{\bf Configuration space over metric measure spaces}}]
{\rm Let $(M,g)$ be a complete smooth Riemannian manifold without boundary.   
The configuration space $\Upsilon$ over $M$ is the space of all locally finite point measures, that is, 
\begin{align*}
\Upsilon:=\{\gamma\in\mathcal{M}(M)\mid \gamma(K)\in\N\cup\{0\}\quad \text{ for all compact sets}\quad K\subset M\}. 
\end{align*}
In the seminal paper Albeverio-Kondrachev-R\"ockner~\cite{AKR} identified a natural geometry on $\Upsilon$ by lifting the geometry of $M$ to $\Upsilon$. In particular, there exists a natural gradient $\nabla^{\Upsilon}$, divergence ${\rm div}^{\Upsilon}$ and 
Laplace operator $\Delta^{\Upsilon}$ on $\Upsilon$. It is shown in \cite{AKR} that the Poisson point measure $\pi$ on 
$\Upsilon$ is the unique (up to intensity) measure on $\Upsilon$ under which the gradient and divergence become 
dual operator in $L^2(\Upsilon;\pi)$. Hence, the Poisson measure $\pi$ is the natural volume measure on $\Upsilon$ 
and $\Upsilon$ can be seen as an infinite dimensional Riemannian manifold. The canonical Dirichlet form 
\begin{align*}
\mathscr{E}(F)=\int_{\Upsilon}|\nabla^{\Upsilon}F|_{\gamma}^2\pi(\d\gamma)
\end{align*}
 constructed in \cite[Theorem~6.1]{AKR} is quasi-regular and strongly local
 and it induces the heat semigroup $T_t^{\Upsilon}$ and a Brownian motion ${\bf X}^{\Upsilon}$ on $\Upsilon$ which can be identified with the independent infinite particle process. If ${\rm Ric}_g\geq K$ on $M$ with $K\in \R$, then $(\Upsilon,\mathscr{E}^{\Upsilon},\pi)$ is tamed by 
 $\kappa:=K\pi$ with $|\kappa|\in S_K({\bf X}^{\Upsilon})$ (see \cite[Theorem~4.7]{EKS} and \cite[Theorem~3.6]{ERST}). 
 Then one can apply Theorems~\ref{thm:main1} and \ref{thm:main2} for $(\Upsilon,\mathscr{E}^{\Upsilon},\pi)$. 
 
 More generally, in Dello Schiavo-Suzuki~\cite{DelloSuzuki:ConfigurationI}, configuration space $\Upsilon$ over proper complete and separable metric space $(X,{\sf d})$ is considered. The configuration space $\Upsilon$ is endowed with the \emph{vague topology} $\tau_V$, induced by duality with continuous compactly supported functions on $X$, and with a reference Borel probability measure $\mu$ satisfying \cite[Assumption~2.17]{DelloSuzuki:ConfigurationI}, commonly understood as the law of a proper point process on $X$. In \cite{DelloSuzuki:ConfigurationI}, 
 they constructed the strongly local Dirichlet form $\mathscr{E}^{\Upsilon}$ defined to be the $L^2(\Upsilon;\mu)$-closure of a certain pre-Dirichlet form on a class of certain cylinder functions and prove its quasi-regularity for a wide class of measures $\mu$ and base spaces (see \cite[Proposition~3.9 and Theorem~3.45]{DelloSuzuki:ConfigurationI}). Moreover, in 
 Dello Schiavo-Suzuki~\cite{DelloSuzuki:ConfigurationII}, for any fixed $K\in \R$ they prove that a Dirichlet form $(\mathscr{E},D(\mathscr{E}))$ with its 
 carr\'e-du-champ $\Gamma$ satisfies ${\sf BE}_2(K,\infty)$ if and only if the Dirichlet form $(\mathscr{E}^{\Upsilon},D(\mathscr{E}^{\Upsilon}))$ on $L^2(\Upsilon;\mu)$ with its carr\'e-du-champ $\Gamma^{\Upsilon}$ satisfies ${\sf BE}_2(K,\infty)$.
Hence, if $(X,\mathscr{E},\m)$ is tamed by $K\m$ with $|K|\m\in S_K({\bf X})$, then  $(\Upsilon,\mathscr{E}^{\Upsilon},\mu)$ is tamed by $\kappa:=K\mu$ with $|\kappa|\in S_K({\bf X}^{\Upsilon})$. 
Then one can apply Theorems~\ref{thm:main1} and \ref{thm:main2} for $(\Upsilon,\mathscr{E}^{\Upsilon},\mu)$ under the suitable class of measures $\mu$ defined in \cite[Assumption~2.17]{DelloSuzuki:ConfigurationI}.  
}
\end{example}

\noindent
{\bf Conflict of interest.} The authors have no conflicts of interest to declare that are relevant to the content of this article.

\bigskip

\noindent
{\bf Data Availability Statement.} Data sharing is not applicable to this article as no datasets were generated or analyzed during the current study.

\bigskip

\noindent
{\bf Acknowledgment.} 
The authors would like to thank Professors Mathias Braun and Luca Tamanini for telling us 
the references \cite{ZLi,HLi} after the first draft of this paper. 

\bigskip


\providecommand{\bysame}{\leavevmode\hbox to3em{\hrulefill}\thinspace}
\providecommand{\MR}{\relax\ifhmode\unskip\space\fi MR }
\providecommand{\MRhref}[2]{%
  \href{http://www.ams.org/mathscinet-getitem?mr=#1}{#2}
}
\providecommand{\href}[2]{#2}

\end{document}